\synctex=1
\documentclass[12pt,a4paper]{amsart}
\usepackage{amssymb}

\textwidth16truecm\hoffset-1.4truecm  
\textheight23truecm\voffset-0.8truecm
\headsep24pt                    
\footskip24pt

\usepackage{graphicx}      
\usepackage[abs]{overpic}
\usepackage[usenames]{color}
\usepackage[varg]{newtxmath}\usepackage{newtxtext}
\usepackage[pdftex,bookmarks=true,bookmarksnumbered=true,bookmarkstype=toc,colorlinks=true,linkcolor=blue,citecolor=blue,setpagesize=false]{hyperref}


\newtheorem{thm}{Theorem}
\newtheorem{lem}[thm]{Lemma}
\newtheorem{cor}[thm]{Corollary}

\newtheorem{knownthm}{Theorem} 

\theoremstyle{definition}
\newtheorem{defn}[thm]{Definition}

\theoremstyle{remark}
\newtheorem{rem}[thm]{Remark}

\numberwithin{equation}{section}
\numberwithin{thm}{section}

\newcommand{\refeq}[1]{\textup{(\ref{eq:#1})}}
\newcommand{\refthm}[1]{Theorem \ref{thm:#1}}
\newcommand{\reflem}[1]{Lemma \ref{lem:#1}}
\newcommand{\refcor}[1]{Corollary \ref{cor:#1}}

\newcommand{\refdefn}[1]{Definition \ref{defn:#1}}

\newcommand{\reffig}[1]{Figure \ref{fig:#1}}


%
%
\newcommand{\const}{C}
\newcounter{const}
\setcounter{const}{-1} 
\newcommand{\cnst}{\refstepcounter{const}\const_{\theconst}}
\newcommand{\clabel}[1]{\cnst\label{#1}}
\newcommand{\refc}[1]{\const_{\ref{c:#1}}} 

\newcommand{\ve}{\varepsilon}
\newcommand{\bd}{\partial}          
\newcommand{\cl}[1]{\overline{#1}}  
\newcommand{\clB}{\cl{B}}           
\newcommand{\BB}{{B^*}}       

\DeclareMathOperator{\dist}{dist}

\DeclareMathOperator{\supp}{supp}

\newcommand{\qtext}{\quad\text}
\newcommand{\sm}{\setminus}
\newcommand{\ds}{\displaystyle}

\newcommand{\mass}[1]{\|#1\|}

\newcommand{\normt}[1]{\|#1\|_2}
\newcommand{\normi}[1]{\|#1\|_\infty}

\newcommand{\increase}{\uparrow}

\newcommand{\inv}{^{-1}}            

\makeatletter
\newcommand{\union}{\mathop{
\if@display
\bigcup\else\operatorname{\hbox{\small$\bigcup$}}
\fi}}
\newcommand{\barint}{\mathop{
\if@display
 \hbox{\vrule height3.2pt depth-2.5pt width.65em}\hskip-1em\int\hskip-.4em
\else
 \hbox{\vrule height3.5pt depth-3.1pt width.6em}\hskip-.85em\int\hskip-.1em
\fi}\ilimits@}


\newcommand{\pder}[2]{\if@display\dfrac{\partial#1}{\partial#2}
\else\partial#1/\partial#2\fi}


\newcommand{\redname}{\widehat{\mathbf{R}}}  
\def\red{\@ifnextchar[{\redDEu}{\redEu}}
\def\redDEu[#1]#2#3{{\null}^{#1}\redname^{#2}_{#3}}
\def\redEu#1#2{\redname^{#1}_{#2}}

\newcommand{\Redname}{\mathbf{R}}  
\def\Red{\@ifnextchar[{\RedDEu}{\RedEu}}
\def\RedDEu[#1]#2#3{{\null}^{#1}\Redname^{#2}_{#3}}
\def\RedEu#1#2{\Redname^{#1}_{#2}}

%
%
\def\hmname{\omega}
\def\hm{\@ifnextchar[{\hmat}{\hmm}}
\def\hmm#1#2{\hmname(#1, #2)}            
\def\hmat[#1]#2#3{\hmname^{#1}(#2,#3)}  

\def\hmdom{\@ifnextchar[{\hmatdom}{\hmmdom}}
\def\hmmdom#1{\hmname(#1)}            
\def\hmatdom[#1]#2{\hmname^{#1}(#2)}  

\makeatother


\newcommand{\di}{n}			    
\newcommand{\dom}{D}       
\newcommand{\bdy}{{\bd\dom}}       
\newcommand{\Dom}{\Omega}       

\newcommand{\bots}{\lambda_{\text{min}}}  

\newcommand{\feigen}{\lambda_\dom} 
\newcommand{\feigenf}{\varphi_\dom} 

\newcommand{\HH}{\mathbb H}
\newcommand{\Z}{\mathbb Z}

\newcommand{\EE}{\widetilde E}
\newcommand{\ddom}{\widetilde \dom}

\newcommand{\Const}{\const}

\newcommand{\cw}{w_\eta}  

\DeclareMathOperator{\capa}{Cap}
\newcommand{\capain}[1]{\capa_{#1}}

\DeclareMathOperator{\Ric}{Ric}

\newcommand{\Czi}{C_0^\infty}
\newcommand{\Gdo}{G_\dom^o}

\newcommand{\inmet}{d_\dom}
\newcommand{\dis}{\delta_\dom}


\newcommand{\koseimarku}[1]{\rlap{\vrule\raise .8em\hbox{\smash{\underbar{\bf\fontsize{3pt}{0pt}\selectfont #1}}}}}

\newcommand{\Corrdel}[1]{{\color{magenta}\koseimarku{del}\relax}{\color{black}}}
\newcommand{\Corr}[1]{{\color{magenta}#1}{\color{black}}}

\newcommand{\mama}{\relax}
\newcommand{\gobble}[1]{\relax}

\newcommand{\Draft}{\let\corr\Corr\let\corrdel\Corrdel}
\newcommand{\Final}{\let\corr\mama\let\corrdel\gobble}

\Draft


\begin{document}

\title[IU for manifolds]{Intrinsic Ultracontractivity for domains in negatively curved manifolds}

\dedicatory{Dedicated to the memory of Professor Walter K. Hayman}

\author{Hiroaki Aikawa}
\address{
College of Engineering,
Chubu University,
Kasugai 487-8501, Japan
}
\email{aikawa@isc.chubu.ac.jp}

\author{Michiel van den Berg}
\address{
School of Mathematics, University of Bristol, Fry Building, Woodland Road, Bristol BS8 1UG, United Kingdom
}
\email{mamvdb@bristol.ac.uk}

\author{Jun Masamune}
\address{
Department of Mathematics,
Hokkaido University,
Sapporo 060-0810, Japan
}
\email{jmasamune@math.sci.hokudai.ac.jp}

\subjclass[2010]{31C12, 31B15,58J35}
\keywords{Intrinsic ultracontractivity, Ricci curvature, first eigenvalue, heat kernel,
torsion function, capacitary width}

\thanks{This work was supported by JSPS KAKENHI Grant Number 17H01092.
MvdB was also supported by The Leverhulme Trust through Emeritus Fellowship EM-2018-011-9.}

\date{March 17, 2021}

\begin{abstract}
Let $M$ be a complete, non-compact, connected Riemannian manifold
with Ricci curvature bounded from below by a negative constant.
A sufficient condition is obtained for open and connected sets $D$ in $M$
for which the corresponding Dirichlet heat semigroup is intrinsically  ultracontractive.
That condition is formulated in terms of capacitary width.
It is shown that both the reciprocal of the bottom of the spectrum of the Dirichlet Laplacian acting in $L^2(D)$,
and the supremum of the torsion function for $D$
are comparable with the square of the capacitary width for $D$
if the latter is sufficiently small.
The technical key ingredients are the volume doubling property,
the Poincar\'e inequality
and the Li-Yau Gaussian estimate for the Dirichlet heat kernel for finite scale.
\end{abstract}

\maketitle

\section{Main results}\label{sec:main}

Let $M$ be a complete, non-compact, $\di$-dimensional connected Riemannian manifold,
without boundary, and with  Ricci curvature bounded below by a negative constant,
i.e., $\Ric\ge -K$ with nonnegative constant $K$.
Throughout the paper, $K$ is reserved for this constant.
In this article, we investigate domains (open, and connected sets) in $M$
for which the heat semigroup is intrinsically ultracontractive.

For a domain $\dom\subset M$ we denote by
$p_\dom(t,x,y)$, $t>0$, $x,y\in\dom$,
the Dirichlet heat kernel for $\pder{}t-\Delta$ in $\dom$,
i.e., the fundamental solution to $(\pder{}t-\Delta)u=0$
subject to the Dirichlet boundary condition
$u(t,x)=0$ for $x\in\bdy$ and $t>0$.
Davies and Simon \cite{MR766493} introduced the notion of
intrinsic ultracontractivity.
There are several equivalent definitions for intrinsic ultracontractivity
(\cite[p.345]{MR766493}).
The following is in terms of the heat kernel estimate.

\begin{defn}\label{defn:IU}
Let $\dom\subset M$.
We say that the semigroup associated with $p_\dom(t,x,y)$ is
\emph{intrinsically ultracontractive} (abbreviated to IU)
if the following two conditions are satisfied:
\begin{enumerate}
\item
The Dirichlet Laplacian $-\Delta$ has no essential spectrum and
has the first eigenvalue $\feigen >0$ with
corresponding positive eigenfunction $\feigenf$
normalized by $\normt{\feigenf}=1$.

\item
For every $t>0$, there exist constants $0<c_t<C_t$
depending on $t$ such that
\begin{equation}\label{eq:IU}
c_t\feigenf(x)\feigenf(y)
\le p_\dom(t,x,y)
\le C_t\feigenf(x)\feigenf(y)
\qtext{for all }x,y\in\dom.
\end{equation}
\end{enumerate}
For simplicity, we say that $\dom$ itself is IU
if the semigroup associated with $p_\dom(t,x,y)$ is IU.
\end{defn}

Both the analytic and probabilistic aspects of IU have been investigated in detail.
For example it turns out that IU implies the Cranston-McConnell inequality,
while IU is derived from very weak regularity of the domain.
Davis \cite{MR1124297} showed that a bounded Euclidean domain above the graph
of an \emph{upper semi-continuous} function is IU; no regularity of the boundary function is needed.
There are many results on IU for Euclidean domains.
Ba{\~n}uelos and Davis \cite[Theorems 1 and 2]{MR1206335} gave conditions
characterizing IU and the Cranston-McConnell inequality when restricting to a certain class of plane domains, which illustrate subtle difference between IU and the Cranston-McConnell inequality.
M{\'e}ndez-Hern{\'a}ndez \cite{MR1755257} gave further extensions.
See also
\cite{MR3420485},
\cite{MR1124298},
\cite{MR1151804},
\cite{MR1269281},
\cite{MR1124297},
and references therein.

There are relatively few results for domains in a Riemannian manifold.
Lierl and Saloff-Coste \cite{MR3170207} studied
a general framework including Riemannian manifolds.
In that paper, they gave a precise heat kernel estimate for a bounded inner uniform domain,
which implies IU (\cite[Theorem 7.9]{MR3170207}).
In view of \cite{MR1124297}, however,
the requirement of inner uniformity for IU to hold can be relaxed.
See Section \ref{sec:rem}.

Our main result is  a sufficient condition for IU for domains in a manifold,
which is a generalization of the Euclidean case \cite{MR3420485}.
Our condition is given in terms of capacity.
It is applicable not only to bounded domains but also to unbounded domains.
Let $\Dom\subset M$ be an open set.
For $E\subset \Dom$ we define relative capacity by
\[
\capa_\Dom(E)
=\inf\Bigl\{\int_\Dom|\nabla\varphi|^2d\mu:
\text{ $\varphi\ge1$ on $E$, $\varphi\in C_0^\infty(\Dom)$}\Bigr\},
\]
where $\mu$ is the Riemannian measure in $M$
and $C_0^\infty(\Dom)$ is the space of all smooth functions compactly supported in $\dom$.
Let $d(x,y)$ be the distance between $x$ and $y$ in $M$.
The open geodesic ball with center $x$ and radius $r>0$ is
denoted by $B(x,r)=\{y\in M: d(x,y)<r\}$.
The closure of a set $E$ is denoted by $\cl E$,
and so $\cl B(x,r)$ stands for the closed geodesic ball of center $x$ and radius $r$.

\begin{defn}
Let $0<\eta<1$.
For an open set $\dom$ we define
the \emph{capacitary width} $\cw(\dom)$
by
\[
\cw(\dom)
=\inf\Big\{r>0:
  \frac{\capain{B(x,2r)}(\cl B(x,r)\sm \dom)}{\capain{B(x,2r)}(\cl B(x,r))}
  \ge\eta
  \qtext{for all $x\in \dom$} \Big\}.
\]
\end{defn}

The next theorem asserts that the parameter $\eta$ has no significance.

\begin{thm}\label{thm:cwcomp}Let $0<R_0<\infty$.
If $0<\eta'<\eta<1$, then
\[
w_{\eta'}(\dom)\le \cw(\dom)\le\const w_{\eta'} (\dom)
\qtext{for all open sets $\dom$ with $\cw(\dom)<R_0$}
\]
with $\const>1$ depending only on $\eta,\eta'$, $\sqrt K\, R_0$ and $\di$.
\end{thm}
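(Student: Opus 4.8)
The first inequality is immediate: if $s>0$ is a scale at which the defining inequality for $\cw(\dom)$ holds, i.e.\ $\capain{B(x,2s)}(\cl B(x,s)\sm\dom)\ge\eta\,\capain{B(x,2s)}(\cl B(x,s))$ for all $x\in\dom$, then the same holds with $\eta$ replaced by $\eta'$, so the admissible set in the definition of $w_{\eta'}(\dom)$ contains that for $\cw(\dom)$; taking infima gives $w_{\eta'}(\dom)\le\cw(\dom)$, and in particular $w_{\eta'}(\dom)<R_0$. For the reverse inequality the plan is to prove a self-improvement statement: there is $N=N(\eta,\eta',\di,\sqrt K\,R_0)\ge1$ such that if $r\in(0,R_0)$ is a scale at which the defining inequality for $w_{\eta'}(\dom)$ holds, then the one for $\cw(\dom)$ holds at scale $Nr$; letting $r$ decrease to $w_{\eta'}(\dom)$ through such scales then yields $\cw(\dom)\le N\,w_{\eta'}(\dom)$.

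The first step is a capacity--probability dictionary at finite scale. Fix $r$ as above. Because $r<R_0$, volume doubling and the Poincar\'e inequality, and hence the two-sided Li--Yau Gaussian bounds for the Dirichlet heat kernel of a ball, hold at all scales $\le R_0$ with constants depending only on $\di$ and $\sqrt K\,R_0$; they give $\capain{B(y,2\rho)}(\cl B(y,\rho))\asymp\mu(B(y,\rho))/\rho^2$, the lower bound $G_{B(y,2\rho)}(y,z)\gtrsim\rho^2/\mu(B(y,\rho))$ for $z\in\cl B(y,\rho)$, and the matching upper bound when in addition $d(y,z)\ge\rho/2$. Writing the capacitary potential of a closed $E\subset\cl B(y,\rho)$ in $B(y,2\rho)$ as $U_E(\cdot)=\int G_{B(y,2\rho)}(\cdot,z)\,d\nu_E(z)$, $\nu_E(E)=\capain{B(y,2\rho)}(E)$, one converts, for each $x\in\dom$, the hypothesis $\capain{B(x,2r)}(\cl B(x,r)\sm\dom)\ge\eta'\,\capain{B(x,2r)}(\cl B(x,r))$ into the statement that the $\Delta$-diffusion started at $x$ hits $M\sm\dom$ before leaving $B(x,2r)$ with probability $\ge a$, where $a=a(\eta',\di,\sqrt K\,R_0)\in(0,1)$.

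The second step amplifies this by iteration and then converts back. Run the diffusion from $x\in\dom$ in ``rounds'': a round started at $y\in\dom$ runs until the diffusion leaves $\dom$ (a ``success'') or leaves $B(y,2r)$, in which case the next round starts from the new position, still in $\dom$. By the strong Markov property each round is a success with conditional probability $\ge a$, and after $m$ rounds the diffusion has moved at most $2mr$ from $x$; hence, with $R:=2mr$,
\[
P_x\bigl(\sigma_{\cl B(x,R)\sm\dom}<\tau_{B(x,2R)}\bigr)\ \ge\ P_x\bigl(\tau_{\dom}<\tau_{B(x,R)}\bigr)\ \ge\ 1-(1-a)^m .
\]
Feeding this back through the dictionary -- splitting $U_{\cl B(x,R)\sm\dom}(x)$ into the equilibrium mass inside $B(x,R/2)$ and that outside, and using the Green-function upper bound on the latter -- should yield $\capain{B(x,2R)}(\cl B(x,R)\sm\dom)\ge\Phi(m)\,\capain{B(x,2R)}(\cl B(x,R))$ with $\Phi(m)\to1$ as $m\to\infty$ and all constants depending only on $\di$ and $\sqrt K\,R_0$. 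Choosing $m$ with $\Phi(m)\ge\eta$ and setting $N=2m$ completes the argument.

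The delicate point is this last conversion. The equilibrium measure of $E=\cl B(x,R)\sm\dom$ may concentrate near $x$ -- only $x\in\dom$ is known, so $E$ can come arbitrarily close to $x$ -- exactly where $G_{B(x,2R)}(x,\cdot)$ blows up, so a high hitting probability need not by itself force a large capacity. I would resolve this by a dichotomy: either the part of $\nu_E$ outside $B(x,R/2)$ already accounts for a definite fraction of $U_E(x)$, and the Green-function upper bound there gives $\capain{B(x,2R)}(E)\gtrsim\mu(B(x,R))/R^2\asymp\capain{B(x,2R)}(\cl B(x,R))$ directly; or the part inside $B(x,R/2)$ does, and then $E$ is already capacitarily substantial in $B(x,R/2)$ and one compares against $\capain{B(x,2R)}(\cl B(x,R/2))\asymp\capain{B(x,2R)}(\cl B(x,R))$. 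Carrying this out, and tracking how the doubling/Poincar\'e/Li--Yau constants -- hence $a$, $\Phi$, and finally $N$ -- depend on $\sqrt K\,R_0$, is the remaining technical work.
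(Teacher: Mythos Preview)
Your overall plan---convert the $\eta'$-condition into a uniform lower bound on the hitting probability of $M\setminus\dom$, iterate via the strong Markov property to push this probability close to $1$, then convert back to a capacity ratio at the larger scale---is exactly the paper's strategy (phrased probabilistically rather than via harmonic measure; compare Lemmas~\ref{lem:1-capa=hm} and~\ref{lem:Omega<(1-Aeta)k}). The gap is precisely where you flag it, and your proposed dichotomy does not close it.

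The difficulty with your Case~2 is real and not merely technical: if the equilibrium measure $\nu_E$ concentrates near $x$, then $\int_{B(x,R/2)}G_{B(x,2R)}(x,z)\,d\nu_E(z)$ can be close to $1$ while $\nu_E(B(x,R/2))$---and hence $\capa_{B(x,2R)}(E)$---is arbitrarily small (a tiny ball at distance $\varepsilon$ from $x$ has capacity $\asymp\varepsilon^{n-2}$ and Green potential $\asymp1$ at $x$). So ``capacitarily substantial in $B(x,R/2)$'' does not follow. And even your Case~1 yields only $\capa_{B(x,2R)}(E)\ge c\,\capa_{B(x,2R)}(\cl B(x,R))$ with a fixed structural constant $c<1$ that does not improve with $m$; you need $\Phi(m)\to1$ to beat an arbitrary $\eta<1$.

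The fix the paper uses is to avoid evaluating the potential only at the center. Your own iteration already gives more: for every $y\in\dom\cap\cl B(x,R')$ with $R'=R-2kr$, running $k$ rounds shows the hitting probability of $E=\cl B(x,R)\setminus\dom$ before exiting $B(x,2R)$ is at least $1-(1-a)^k$ (and on $E$ it is $1$). Thus $\inf_{\cl B(x,R')}\red[B(x,2R)]{E}{1}\ge 1-(1-a)^k$. Now use the elementary Fubini inequality of \reflem{capa=infred}(i),
\[
\inf_{\cl B(x,R')}\red[B(x,2R)]{E}{1}\ \le\ \frac{\capa_{B(x,2R)}(E)}{\capa_{B(x,2R)}(\cl B(x,R'))},
\]
which is proved by integrating against the equilibrium measure of $\cl B(x,R')$ (not of $E$), thereby sidestepping the concentration problem. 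This gives the ratio with $\cl B(x,R')$ in the denominator; to pass to $\cl B(x,R)$ you need the nontrivial fact (\reflem{capaB}) that $\capa_{B(x,2R)}(\cl B(x,tR))/\capa_{B(x,2R)}(\cl B(x,R))\to1$ uniformly as $t\to1$. Choosing first $k$ so that $(1-a)^k\le1-\sqrt\eta$ and then $N$ so that the nested-ball ratio at $t=1-N^{-1}$ exceeds $\sqrt\eta$, with $R=2Nkr$ (so $R'=R-2kr=(1-N^{-1})R$), multiplies to the desired $\capa_{B(x,2R)}(E)\ge\eta\,\capa_{B(x,2R)}(\cl B(x,R))$.
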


The first condition for IU has a characterization in terms of capacitary width.
This is straightforward from Persson's argument \cite{MR0133586}, and \refthm{cwvDbot} below.
Hereafter we fix $o\in M$.

\begin{thm}\label{thm:nes}
Let $\dom$ be a domain in $M$.
Then $\dom$ has no essential spectrum if and only if
$\lim_{R\to\infty}\cw(\dom\sm \cl B(o,R))=0$.
\end{thm}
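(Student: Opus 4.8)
The plan is to invoke Persson's classical criterion: for a domain $\dom$, the essential spectrum of the Dirichlet Laplacian $-\Delta$ on $L^2(\dom)$ is empty if and only if
\[
\lim_{R\to\infty}\ \inf\bigl\{\lambda_{\dom\sm\cl B(o,R)}\ :\ \text{bottom of spectrum on }\dom\sm\cl B(o,R)\bigr\}=+\infty,
\]
where the infimum of the Rayleigh quotient is taken over $C_0^\infty(\dom\sm\cl B(o,R))$. In our notation this says that $\dom$ has no essential spectrum precisely when $\lambda_{\dom\sm\cl B(o,R)}\to\infty$ as $R\to\infty$, where $\lambda_E$ denotes the bottom of the spectrum of $-\Delta$ on $E$. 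So the first step is to record Persson's argument \cite{MR0133586} in the present manifold setting; this is routine since it uses only IMS-type localization and the fact that $\cl B(o,R)$ is compact (so that the restriction to $\dom\cap\cl B(o,R)$ contributes only to the discrete/non-essential part).

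The second and main step is to translate the condition $\lambda_{\dom\sm\cl B(o,R)}\to\infty$ into the capacitary-width condition $\cw(\dom\sm\cl B(o,R))\to0$. For this I would appeal to \refthm{cwvDbot} (the comparison between $1/\feigen$ and $\cw(\dom)^2$ for small capacitary width, announced in the excerpt), applied to the open sets $\dom_R:=\dom\sm\cl B(o,R)$. Concretely: if $\cw(\dom_R)\to0$, then for $R$ large $\cw(\dom_R)$ is below the threshold $R_0$ of \refthm{cwvDbot}, hence $\lambda_{\dom_R}\ge c\,\cw(\dom_R)^{-2}\to\infty$, giving one implication. Conversely, if $\lambda_{\dom_R}\to\infty$, then again for $R$ large $\cw(\dom_R)$ is small enough that \refthm{cwvDbot} applies and yields $\cw(\dom_R)^2\le C/\lambda_{\dom_R}\to0$, giving the other implication. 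One should note that $\dom_R$ need not be connected, but capacitary width and the bottom of the spectrum are both defined for arbitrary open sets (and $\lambda_{\dom_R}$ is the infimum of the bottoms over components), so this causes no difficulty; one also uses the obvious monotonicity $\cw(\dom_R)$ decreasing in $R$, which makes the limit well defined.

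The one point requiring a little care — and the place I expect the only genuine obstacle — is the quantified hypothesis "$\cw(\dom)<R_0$'' in \refthm{cwvDbot} and \refthm{cwcomp}: the comparison constants there depend on $\sqrt K\,R_0$, so one must make sure the argument is run with a \emph{fixed} choice of $R_0$ and only for $R$ large enough that $\cw(\dom_R)<R_0$. Since in both directions we are proving a statement about the behaviour as $R\to\infty$, we are always free to discard finitely many $R$ and thus to assume $\cw(\dom_R)<R_0$; the constants then stay uniform. With that observation in place the two implications above close the proof, and the dependence of all constants is only on $K$ and $\di$, consistently with the standing hypotheses on $M$.
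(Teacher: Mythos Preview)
Your approach is exactly the paper's: Persson reduces the question to whether $\bots(\dom_R)\to\infty$ for $\dom_R:=\dom\sm\cl B(o,R)$, and then \refthm{cwvDbot} translates between $\bots$ and $\cw$. The forward implication ($\cw(\dom_R)\to0\Rightarrow\bots(\dom_R)\to\infty$) is handled correctly.

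The converse, however, contains a circularity you have not closed. You say that once $\bots(\dom_R)\to\infty$, ``for $R$ large $\cw(\dom_R)$ is small enough that \refthm{cwvDbot} applies,'' and your final paragraph proposes to ``discard finitely many $R$ and thus assume $\cw(\dom_R)<R_0$.'' But the eventual smallness of $\cw(\dom_R)$ is precisely the conclusion you are after in this direction; nothing you have established rules out $\cw(\dom_R)\ge R_0$ for every $R$. Since the hypothesis of \refthm{cwvDbot} is stated on the $\cw$ side, you cannot invoke it starting from $\bots$ without an additional step.

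That step is \refrem{L0} (proved as \reflem{vDbot<C}): the comparison \refeq{vdb} between $\bots(\dom)$ and $\normi{v_\dom}$ holds under the alternative hypothesis $\bots(\dom)>\Lambda_0$, which \emph{is} immediate once $\bots(\dom_R)\to\infty$. This gives $\normi{v_{\dom_R}}\to0$, and then the first inequality of \refeq{cwvDbot} (equivalently \refthm{vd=cw2}, whose small-$\normi{v_\dom}$ branch is the operative one) yields $\cw(\dom_R)\to0$. With that one-line correction your argument is complete and coincides with what the paper intends.
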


We shall show the following sufficient condition for IU,
which looks the same as in the Euclidean case \cite{MR3420485}.
Nevertheless, the proof is significantly different for negatively curved manifolds.
See the remark after \refthm{vdb}.

\begin{thm}\label{thm:IU}
Suppose $M$ has positive injectivity radius.
Then a domain $\dom\subset M$ is IU if the following two conditions are satisfied:
\begin{enumerate}
\item $\lim_{R\to\infty}\cw(\dom\sm \cl B(o,R))=0$.
\item For some $\tau>0$
\begin{equation}\label{eq:IUint}
\int_0^\tau \cw(\{x\in\dom: G_\dom(x,o)<t\})^2\frac{dt}t<\infty,
\end{equation}
where $G_\dom$ is the Green function for $\dom$.
\end{enumerate}
\end{thm}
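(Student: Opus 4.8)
The plan is to verify the two clauses of Definition~\ref{defn:IU}, the first being immediate and the second carrying all the weight. Clause~(i) follows at once: by condition~(i) of \refthm{IU} and \refthm{nes} the Dirichlet Laplacian on $\dom$ has no essential spectrum, so its spectrum is discrete, $\feigen>0$ is a genuine first eigenvalue, and the positive normalized ground state $\feigenf$ exists; moreover $\feigenf(x)\le e^{\feigen t}p_\dom(2t,x,x)^{1/2}\le e^{\feigen t}p_M(2t,x,x)^{1/2}$, and the assumption that $M$ has positive injectivity radius gives (with $\Ric\ge -K$) uniform two-sided bounds for the volume of balls of bounded radius, hence $\sup_{x\in M}p_M(2t,x,x)<\infty$ for every $t>0$, so $\feigenf$ is bounded. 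For clause~(ii), by \cite{MR766493} it suffices to establish the upper half of \refeq{IU} for every $t>0$, and by $p_\dom(t,x,y)\le p_\dom(t,x,x)^{1/2}p_\dom(t,y,y)^{1/2}$ this is equivalent to the on-diagonal estimate
\[
p_\dom(t,x,x)\le C_t\,\feigenf(x)^2\qtext{for all }x\in\dom,\ t>0,
\]
with $C_t$ allowed to blow up as $t\downarrow 0$; the reverse inequality $p_\dom(2t,x,x)\ge e^{-2\feigen t}\feigenf(x)^2$ is automatic, so the whole theorem reduces to this on-diagonal upper bound.

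The key local ingredient is a lifetime estimate in terms of capacitary width — an $\eta$-independent statement, by \refthm{cwcomp}: if $U\subset M$ is open and $\cw(U)$ is smaller than some fixed $R_0$, then
\[
\int_U p_U(s,x,y)\,d\mu(y)\le 2\exp\!\big(-c\,s/\cw(U)^2\big)\qtext{for all }x\in U,\ s>0,
\]
with $c>0$ depending only on $\di$, $K$ and $R_0$; equivalently, the supremum of the torsion function of $U$ is comparable with $\cw(U)^2$, which is \refthm{vdb}. This is proved from the volume doubling property, the Poincar\'e inequality and the finite-scale Li--Yau estimate for Dirichlet heat kernels on balls: small capacitary width forces a uniform lower bound for the probability of exiting $U$ inside a ball of radius comparable to $\cw(U)$, hence within time comparable to $\cw(U)^2$, and this iterates.

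The heart of the proof is a dyadic chain of Green-function collars. Put $V_s=\{x\in\dom:G_\dom(x,o)<s\}$; since $G_\dom(\cdot,o)$ is positive, superharmonic off $o$, vanishes on $\bdy$ and, under condition~(i), decays at infinity, the sets $V_{2^{-k}}$ form a decreasing family of outer collars, and condition~(ii), i.e.\ \refeq{IUint}, is precisely $\sum_k\cw(V_{2^{-k}})^2<\infty$. Fix a compact set $F\subset\dom$ on which $\feigenf\ge c_F>0$. For $x$ with $G_\dom(x,o)\in(2^{-k-1},2^{-k}]$ one iterates the lifetime estimate across the successive collars $V_{2^{-j}}$, $j\le k$, following the process until it reaches $F$; the convergence of $\sum_j\cw(V_{2^{-j}})^2$ makes the resulting infinite product of passage estimates converge and yields $\feigenf(x)\ge c\,G_\dom(x,o)$ on $\dom\sm B(o,\ve)$, while the same chain run for the heat kernel gives the matching $p_\dom(t,x,x)\le C_t\,G_\dom(x,o)^2$ there, whence $p_\dom(t,x,x)\le C_t\feigenf(x)^2$ near $\bdy$. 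For the behaviour at infinity one replaces the collars by the shells $\dom\sm\cl B(o,R)$, whose capacitary widths tend to $0$ by condition~(i) and \refthm{nes}, so that $\dom$ is thin at its end and a boundary-Harnack-type comparison — again powered by the lifetime estimate — forces $\feigenf$ and $p_\dom(t,\cdot,\cdot)$ to decay at comparable rates as $x\to\infty$. On the remaining bounded part of $\dom$ that stays away from $\bdy$, both $\feigenf$ and $p_\dom(t,\cdot,\cdot)$ are bounded above and below by positive constants, by the finite-scale elliptic and parabolic Harnack inequalities from $F$. Patching the three regions gives the on-diagonal bound for all $x$, hence \refeq{IU}, hence IU.

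I expect the main obstacle to be the finite-scale character of every quantitative input. The Li--Yau estimate, the Harnack inequality and the capacitary-width lifetime estimate carry constants that degrade as the scale at which they are applied grows, because $\Ric\ge -K$ permits exponential volume growth; the global heat kernel bounds that make the Euclidean argument of \cite{MR3420485} work are therefore unavailable, and every estimate must be applied only at scales below some fixed $R_0$. One must accordingly arrange that every collar $V_{2^{-k}}$ and every shell $\dom\sm\cl B(o,R)$ entering the two chains has capacitary width below $R_0$ — which is exactly what conditions~(i) and~(ii) guarantee, through \refthm{nes} and \refeq{IUint} — and carry out the collar chaining and the heat-kernel decomposition at controlled scales throughout. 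This bookkeeping, rather than any single estimate, is what makes the negatively curved case genuinely different from the flat one.
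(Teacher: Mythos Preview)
Your outline has the right skeleton—survival-probability decay governed by capacitary width, then a chain argument over level sets of $G_\dom(\cdot,o)$—but the crucial parabolic step is not carried out, and the dyadic scale you choose would in fact make it fail. In the comparison of $P_\dom(t,\cdot)$ with $G_\dom(\cdot,o)$ across the collar $V_{\alpha_k}$, each step incurs a factor $1/\alpha_{k+1}$ (from dividing by $G_\dom(x,o)\ge\alpha_{k+1}$ on $\EE_k$) which must be absorbed by the survival decay $\exp(-c\,s_k/\cw(V_{\alpha_k})^2)$, subject to $\sum_k s_k<t$. With $\alpha_k=2^{-k}$ this forces $s_k\gtrsim k\,\cw(V_{2^{-k}})^2$, so one needs $\sum_k k\,\cw(V_{2^{-k}})^2<\infty$, strictly stronger than the $\sum_k\cw(V_{2^{-k}})^2<\infty$ that \refeq{IUint} actually delivers. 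The paper takes $\alpha_j=e^{-2^j}$ instead; then \refeq{IUint} reads $\sum_j 2^j\cw(V_{\alpha_j})^2<\infty$, which exactly balances $1/\alpha_{j+1}=e^{2^{j+1}}$ and makes the recursion $q_k\le q_{k-1}+\refc{ke1}e^{-2^k}$ sum. The doubly-exponential scale is not a cosmetic choice.

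Two further points. The inequality $\feigenf\ge c\,G_\dom(\cdot,o)$ on $\dom\sm B(o,\ve)$ needs no iteration: $\feigenf$ is superharmonic, $G_\dom(\cdot,o)$ is harmonic off $\{o\}$, and the maximum principle does the rest. And the separate ``behaviour at infinity'' analysis via shells $\dom\sm\cl B(o,R)$ is unnecessary, since the Green-function collars already exhaust $\dom$ away from $o$. The paper's route is more direct: establish the single bound $P_\dom(t,x)\le C_t\,G_\dom(x,o)$ for all $x\in\dom$ by the parabolic box argument (\reflem{P<Ag}), then use the positive injectivity radius to get $p_M(t,x,y)\le Ct^{-\di/2}$ for small $t$, feed this through the semigroup identity to obtain $p_\dom(3t,x,y)\le Ct^{-\di/2}P_\dom(t,x)P_\dom(t,y)\le C_t\,G_\dom(x,o)G_\dom(y,o)$, and finish with $G_\dom(\cdot,o)\le C\feigenf$.
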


Our results are based on the relationship between
the torsion function
\[
v_\dom(x)=\int_\dom G_\dom(x,y)d\mu(y)
\]
and the bottom of the spectrum
\begin{equation}\label{eq:botsD}
\bots(\dom)
=\inf\Biggl\{\frac{\normt{\nabla f}^2}{\normt{f}^2}:f \in \Czi(\dom)\text{ with } \normt{f}\ne0\Biggr\}.\end{equation}
We note that $\bots(\dom)$ is the first eigenvalue $\lambda_\dom$ if $\dom$ has no essential spectrum.
This is always the case for a bounded domain $\dom$.
\refthm{nes} asserts that the same holds even for an unbounded domain $\dom$ whenever
$\lim_{R\to\infty}\cw(\dom\sm \cl B(o,R))=0$.
We also observe that the torsion function is the solution to the de Saint-Venant problem:
\[
\begin{split}
-\Delta v_\dom=1 &\qtext{in }\dom,\\
v_\dom=0&\qtext{on }\bdy,
\end{split}
\]
where the boundary condition is taken in the Sobolev sense.
The second named author \cite{MR3682197} proved the following theorem.

\begin{knownthm}\label{thm:vdb}
Let $K=0$.
If $\dom\subset M$ satisfies $\bots(\dom)>0$,
then
\begin{equation}\label{eq:vdb}
\bots(\dom)\inv \le \normi{v_{\dom}} \le \const\bots(\dom)\inv,
\end{equation}
where $\const$ depends only on $M$.
\end{knownthm}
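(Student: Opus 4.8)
The plan is to prove the two inequalities separately. The lower bound $\bots(\dom)\inv\le\normi{v_\dom}$ is soft and needs no curvature hypothesis; for the upper bound I would route through the capacitary width, proving $\normi{v_\dom}\le\const\,\cw(\dom)^2$ and $\cw(\dom)^2\le\const\,\bots(\dom)\inv$ and composing the two. Throughout I fix a small $\eta\in(0,1)$ in $\cw=w_\eta$; by \refthm{cwcomp} its precise value is immaterial, and since $K=0$ every geometric estimate below is scale invariant, so all constants $\const$ --- not necessarily the same at each occurrence --- depend only on $M$, i.e.\ on $\di$.

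For the lower bound, exhaust $\dom$ by bounded open sets $\dom_j\increase\dom$. On each bounded $\dom_j$ the Dirichlet Laplacian has discrete spectrum, with a positive first eigenfunction $\varphi_j$ satisfying $-\Delta\varphi_j=\bots(\dom_j)\varphi_j$; Green's formula applied to $v_{\dom_j}$ and $\varphi_j$ (boundary terms vanish and $-\Delta v_{\dom_j}=1$) gives $\bots(\dom_j)\int_{\dom_j}v_{\dom_j}\varphi_j\,d\mu=\int_{\dom_j}\varphi_j\,d\mu$, hence $\normi{v_{\dom_j}}\ge\bots(\dom_j)\inv$ since $\varphi_j>0$. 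Letting $j\to\infty$, $v_{\dom_j}\increase v_\dom$ and $\bots(\dom_j)\decrease\bots(\dom)$, so $\normi{v_\dom}\ge\bots(\dom)\inv$.

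Next I would show $\normi{v_\dom}\le\const\,\cw(\dom)^2$. Put $u(t,x):=\int_\dom p_\dom(t,x,y)\,d\mu(y)\in[0,1]$, so that $v_\dom(x)=\int_0^\infty u(t,x)\,dt$, and set $a(t):=\sup_{x\in\dom}u(t,x)$. The Chapman--Kolmogorov identity gives $u(t+s,x)=\int_\dom p_\dom(s,x,z)u(t,z)\,d\mu(z)\le a(t)u(s,x)$, so $a$ is submultiplicative and non-increasing, whence $\normi{v_\dom}\le\int_0^\infty a(t)\,dt\le T_1/(1-a(T_1))$ whenever $a(T_1)<1$. Thus it is enough to produce $T_1\le\const\,\cw(\dom)^2$ and $\vartheta\in(0,1)$, both depending only on $M$, with $a(T_1)\le\vartheta$. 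Pick $r$ slightly above $\cw(\dom)$ (finite by the last paragraph below) so that $\capain{B(x,2r)}(\cl B(x,r)\sm\dom)\ge\eta\,\capain{B(x,2r)}(\cl B(x,r))$ for all $x\in\dom$; write $\tau_\dom$, $\tau_{B(x,2r)}$ for the exit times of Brownian motion from $\dom$ and from $B(x,2r)$, so $u(t,x)=\mathbb P^x(\tau_\dom>t)$. Volume doubling, the Poincar\'e inequality and the Li--Yau estimate now give: (i) $\capain{B(x,2r)}(\cl B(x,r))\asymp\mu(B(x,r))/r^2$ and $G_{B(x,2r)}(x,z)\gtrsim r^2/\mu(B(x,r))$ for $d(x,z)\le r$, so integrating the equilibrium measure of $\cl B(x,r)\sm\dom$ against $G_{B(x,2r)}(x,\cdot)$ yields, for the probability $p(x)$ that Brownian motion from $x$ hits $\cl B(x,r)\sm\dom$ before leaving $B(x,2r)$, the bound $p(x)\ge\const^{-1}$; and (ii) the Green upper bound $G_{B(x,2r)}(x,y)\lesssim d(x,y)^2/\mu(B(x,d(x,y)))$, summed over dyadic annuli, gives $\mathbb E^x[\tau_{B(x,2r)}]\le\const\,r^2$. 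Since hitting $\cl B(x,r)\sm\dom\subset M\sm\dom$ forces exit from $\dom$,
\[
u(T,x)=\mathbb P^x(\tau_\dom>T)\le(1-p(x))+\mathbb P^x(\tau_{B(x,2r)}>T)\le(1-\const^{-1})+\const\,r^2/T,
\]
which for $T=T_1=\const\,r^2$, with the constant chosen large enough, is bounded by some $\vartheta<1$. Hence $\normi{v_\dom}\le T_1/(1-\vartheta)\le\const\,r^2$, and letting $r\decrease\cw(\dom)$ gives $\normi{v_\dom}\le\const\,\cw(\dom)^2$.

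Finally I would show $\cw(\dom)^2\le\const\,\bots(\dom)\inv$. For any $r<\cw(\dom)$ the definition of $\cw$ supplies $x\in\dom$ with $\capain{B(x,2r)}(\cl B(x,r)\sm\dom)<\eta\,\capain{B(x,2r)}(\cl B(x,r))$, i.e.\ $M\sm\dom$ carries only a small fraction of the capacity of $B(x,r)$. Testing the Rayleigh quotient against $f\in\Czi(\dom)$ obtained by multiplying a standard cutoff of $B(x,r)$ by $1$ minus a smoothed capacitary potential of $\cl B(x,r)\sm\dom$ gives $\normt{\nabla f}^2\lesssim\mu(B(x,r))/r^2$ and $\normt{f}^2\gtrsim\mu(B(x,r))$, the latter because the excised set has small capacity, hence small measure by the isocapacitary inequality (valid under doubling and the Poincar\'e inequality) once $\eta$ is small. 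Thus $\bots(\dom)\le\normt{\nabla f}^2/\normt{f}^2\le\const\,r^{-2}$; in particular $\bots(\dom)>0$ forces $\cw(\dom)<\infty$, and $r\increase\cw(\dom)$ gives $\bots(\dom)\le\const\,\cw(\dom)^{-2}$. Combining, $\normi{v_\dom}\le\const\,\cw(\dom)^2\le\const\,\bots(\dom)\inv$. The hard part is the exit estimate in the third paragraph --- turning ``$M\sm\dom$ occupies a fixed proportion of the capacity of every ball of radius $r$'' into ``Brownian motion leaves $\dom$ by time $\const\,r^2$ with probability bounded away from $0$, uniformly in the starting point'' --- and this is precisely where the curvature hypothesis is used, entering through the two-sided Green function bounds and through $\mathbb E^x[\tau_{B(x,2r)}]\lesssim r^2$, all of which follow from volume doubling, the Poincar\'e inequality, and the scale-invariant (since $K=0$) Li--Yau Gaussian bound for $p_M$.
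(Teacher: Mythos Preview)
Your proposal is essentially correct, but it takes a genuinely different route from the paper's.  In the paper this result is quoted from \cite{MR3682197} and then effectively reproved via \reflem{vDbot>1} (lower bound) and \reflem{vDbot<C} together with \refrem{eigen.torsion} (upper bound), \emph{without any reference to capacitary width}.

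For the lower bound the paper does not use your eigenfunction integration-by-parts; instead it combines the two semigroup estimates $e^{-\bots(\dom)t}\le\pi_\dom(t)$ and $\pi_\dom(t)\le\frac{C}{C-1}\exp\bigl(-t/(C\normi{v_\dom})\bigr)$ (the latter coming from the supersolution $e^{-\beta t}(v_\dom+(C-1)\normi{v_\dom})$) and lets $t\to\infty$.  Your argument is the more classical one and is perfectly valid.

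For the upper bound the contrast is sharper.  The paper's argument is direct: from $p_\dom\le\sqrt{p_\dom\,p_M}$ and the (scale-free, since $K=0$) Li--Yau bounds one gets $\pi_\dom(t)\le\const e^{-\bots(\dom)t/4}$ for \emph{all} $t>0$, and integrating in $t$ gives $\normi{v_\dom}\le\const/\bots(\dom)$ in one line.  You instead prove the two legs $\normi{v_\dom}\le\const\,\cw(\dom)^2$ and $\cw(\dom)^2\le\const/\bots(\dom)$ and compose them.  The first leg is the second inequality of the paper's \refthm{vd=cw2}, albeit via a probabilistic hitting/exit-time estimate rather than the paper's elliptic comparison with $v_{B(x,2r)}$; both work.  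The second leg is \emph{not} proved in the paper as a standalone statement --- in \refeq{cwvDbot} it is obtained by chaining through $\normi{v_\dom}$ using \reflem{vDbot<C}, which would be circular here --- so your Rayleigh-quotient test function $f=\chi(1-u)$ is genuinely needed, and it does go through once one notes that $u\in W_0^{1,2}(B(x,2r))$ with $\int|\nabla u|^2<\eta\,\capa(\clB(x,r))\approx\eta\,\mu(B(x,r))/r^2$, so $\int u^2\le\const r^2\int|\nabla u|^2\le\const\eta\,\mu(B(x,r))$ by \refcor{lambdaBr}, whence $\int_{B(x,r/2)}(1-u)^2\gtrsim\mu(B(x,r))$ for $\eta$ small.  (One minor point: take $\chi$ supported in $B(x,r)$, not $B(x,2r)$, so that $f$ vanishes on all of $M\sm\dom$, not just on $\clB(x,r)\sm\dom$.)

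In short: the paper's route is shorter and isolates exactly what is needed --- the global-in-time Gaussian bound; your route is a legitimate alternative that foreshadows the capacitary-width machinery the paper develops for the $K>0$ case, at the cost of a longer detour.
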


The second inequality of \refeq{vdb} does not necessarily hold for
negatively curved manifolds.
Let $\HH^\di$ be the $\di$-dimensional hyperbolic space of constant curvature $-1$.
It is known that
\[
\bots(\HH^\di)=\frac{(\di-1)^2}4,
\]
whereas  $v_{\HH^\di}\equiv\infty$ as $\HH^\di$ is stochastically complete.
Hence the second inequality of \refeq{vdb} fails to hold if $\dom$ is the whole space $\HH^\di$.

The point of this paper is that \refeq{vdb} still holds
if $\dom$ is limited to a certain class.
We make use of \refeq{vdb} with this limitation to derive
Theorems \ref{thm:nes} and \ref{thm:IU}.
We have the following theorem, which is a key ingredient in their proofs.

\begin{thm}\label{thm:cwvDbot}
Let $K\ge0$ and let $0<\eta<1$.
Then there exist $R_0>0$ and $\const>1$ depending only on $K$, $\eta$ and $\di$ such that if
$\dom\subset M$ satisfies $\cw(\dom)<R_0$, then
\begin{equation}\label{eq:cwvDbot}
\frac{\const\inv}{\cw(\dom)^2}
\le\frac1{\normi{v_\dom}}\le \bots(\dom)
\le\frac\const{\normi{v_\dom}}
\le\frac{\const^2}{\cw(\dom)^2}.
\end{equation}
\end{thm}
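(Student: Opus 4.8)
The plan is to split \refeq{cwvDbot} into its four successive inequalities and to establish them in turn, relying throughout on the finite-scale toolkit that $\Ric\ge-K$ provides. Fix $R_0>0$ (depending on $K$, $\eta$, $\di$) so small that on every geodesic ball of radius $\le 2R_0$ the volume doubling property, the Poincar\'e inequality and the two-sided Li--Yau Gaussian bounds for the Dirichlet heat kernel hold with constants depending only on $K$ and $\di$; these are available precisely because $\Ric\ge-K$. Every ball below has radius less than $2\cw(\dom)<2R_0$, and keeping all of the analysis at this finite scale is exactly what the hypothesis $\cw(\dom)<R_0$ is for --- it cannot be dispensed with, since for $\dom=\HH^\di$ one has $\normi{v_\dom}=\infty$ while $\bots(\dom)>0$, consistent only because $\cw(\HH^\di)=\infty$ lies outside the scope of the theorem. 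By \refthm{cwcomp}, $\cw(\dom)$ is, up to constants depending on $K,\eta,\di$, independent of $\eta$, so it suffices to run the argument for one fixed, suitably small value of $\eta$; the passage to a general $\eta$ is routine bookkeeping with \refthm{cwcomp} and the monotonicity $w_{\eta'}(\dom)\le w_\eta(\dom)$ for $\eta'<\eta$. Write $r=\cw(\dom)$.

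\emph{Width versus torsion.} For the bound $\normi{v_\dom}\le\const r^2$ I would argue probabilistically. From the capacity estimate that $\capain{B(x,2\rho)}(\cl B(x,\rho))$ is comparable to $\mu(B(x,\rho))\rho^{-2}$ and the Green-function lower bound $G_{B(x,2\rho)}(x,\cdot)\ge c\rho^2/\mu(B(x,\rho))$ on $\cl B(x,\rho)$ --- both from the finite-scale toolkit --- the equilibrium potential in $B(x,2r)$ of the thin set $\cl B(x,r)\sm\dom$ takes at $x$ a value bounded below by a positive constant depending on $\eta,K,\di$; equivalently, Brownian motion started at any $x\in\dom$ meets $\bdy$ before leaving $B(x,2r)$ with probability bounded below uniformly. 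Combined with the finite-scale exit estimate $\mathbb{P}_x(\tau_{B(x,2r)}>Ar^2)\le Ce^{-cA}$ this yields $\sup_{x\in\dom}\mathbb{P}_x(\tau_\dom>Ar^2)\le\theta<1$ for $A$ large; iterating by the Markov property gives $\mathbb{P}_x(\tau_\dom>t)\le Ce^{-ct/r^2}$, and integrating in $t$ gives $\normi{v_\dom}=\sup_x\mathbb{E}_x[\tau_\dom]\le\const r^2$. In particular $\normi{v_\dom}<\infty$ and $\bots(\dom)\ge cr^{-2}>0$, as the statement requires. The reverse bound $\normi{v_\dom}\ge\const\inv r^2$ is more delicate, since $\dom$ need only be capacity-fat, not geometrically fat, near its fat point. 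Given $s<r$, pick (by definition of $\cw$) a point $x_0\in\dom$ with $\capain{B(x_0,2s)}(E)<\eta\,\capain{B(x_0,2s)}(\cl B(x_0,s))$, $E=\cl B(x_0,s)\sm\dom$, and let $u_E,\nu_E$ be the equilibrium potential and measure of $E$ in $B(x_0,2s)$. The identity $\int u_E\,d\mu=\int v_{B(x_0,2s)}\,d\nu_E$ (Fubini), together with $\normi{v_{B(x_0,2s)}}\le\const s^2$ and $\nu_E(E)=\capain{B(x_0,2s)}(E)$, gives $\int u_E\,d\mu\le\const\eta\,\mu(B(x_0,s))$; hence, by Chebyshev, $\{x\in B(x_0,s/4):u_E(x)\le c_0/2\}$ has measure at least $c\,\mu(B(x_0,s))$ once $\eta$ is small, where $c_0=\inf_x\mathbb{P}_x(\tau_{B(x,s/2)}>cs^2)>0$. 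Since $u_E=1$ on $E$, this set lies in $\dom$, and from such an $x$ Brownian motion stays in $B(x_0,s)\cap\dom\subset\dom$ up to time $cs^2$ with probability at least $c_0/2$; thus $v_\dom(x)\ge cs^2$, and letting $s\uparrow r$ gives $\normi{v_\dom}\ge\const\inv r^2$.

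\emph{Torsion versus spectrum.} The inequality $\bots(\dom)\normi{v_\dom}\ge1$ is classical: on an exhaustion $\dom_j\uparrow\dom$ by smooth precompact domains, let $\varphi_j>0$ be the first Dirichlet eigenfunction with $\int\varphi_j\,d\mu=1$; integrating $-\Delta v_{\dom_j}=1$ against $\varphi_j$ gives $1=\bots(\dom_j)\int v_{\dom_j}\varphi_j\,d\mu\le\bots(\dom_j)\normi{v_{\dom_j}}\le\bots(\dom_j)\normi{v_\dom}$, and $\bots(\dom_j)\downarrow\bots(\dom)$. For the opposite inequality I would prove the equivalent statement $\bots(\dom)\le\const r^{-2}$ by a capacitary test function: with $x_0,E,u_E$ as above and $\varphi\in\Czi(B(x_0,s))$ equal to $1$ on $B(x_0,s/2)$ with $\normi{\nabla\varphi}\le\const/s$, put $f=\varphi(1-u_E)$. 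Since $u_E=1$ quasi-everywhere on $E\supseteq\dom^c\cap B(x_0,s)$ and $\varphi$ vanishes outside $B(x_0,s)$, $f$ vanishes quasi-everywhere on $\dom^c$ and so is admissible for $\bots(\dom)$; moreover $\int|\nabla f|^2\,d\mu\le\const s^{-2}\mu(B(x_0,s))+2\,\capain{B(x_0,2s)}(E)\le\const s^{-2}\mu(B(x_0,s))$, while $\int f^2\,d\mu\ge\mu(B(x_0,s/2))-2\int u_E\,d\mu\ge c\,\mu(B(x_0,s))$ once $\eta$ is small (using the above bound on $\int u_E\,d\mu$ and volume doubling). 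Hence the Rayleigh quotient of $f$ is at most $\const s^{-2}$, and letting $s\uparrow r$ gives $\bots(\dom)\le\const r^{-2}$. Assembling the four estimates --- $\normi{v_\dom}\le\const r^2$, $\bots(\dom)\ge\normi{v_\dom}^{-1}$, $\bots(\dom)\le\const r^{-2}\le\const\normi{v_\dom}^{-1}$ (using the first estimate), and $\normi{v_\dom}\ge\const\inv r^2$ --- and enlarging $\const$ to dominate the finitely many constants produced, yields \refeq{cwvDbot}.

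\emph{Main obstacle.} The four assembly steps are soft once the right estimates are in hand; the genuine work is the finite-scale toolkit itself --- volume doubling, the Poincar\'e inequality, and above all the two-sided Li--Yau bound for the \emph{Dirichlet} heat kernel on balls of radius $\le2R_0$ with constants independent of the centre and of $\dom$ --- together with the discipline of never leaving that scale, the global heat kernel of a negatively curved manifold being of no help here. Among the four inequalities, the lower bound $\normi{v_\dom}\ge\const\inv r^2$ is where this is most strained: one must extract a definite amount of surviving Brownian mass from a set known only to be fat in capacity, not in geometry, for which no interior ball of radius comparable to $r$ need exist.
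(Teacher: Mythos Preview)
Your argument is correct, and it differs from the paper's in two of the four steps.  For the inequality $\bots(\dom)\,\normi{v_\dom}\ge1$ the paper (\reflem{vDbot>1}) goes through the pair of survival-probability bounds \refeq{vDbot>1a}--\refeq{vDbot>1b} and lets a parameter tend to~$1$; your integration of $-\Delta v_{\dom_j}=1$ against the first eigenfunction on an exhaustion is the shorter classical route and is entirely adequate here.  For the third inequality the paper proves the self-contained statement $\bots(\dom)\,\normi{v_\dom}\le\const$ whenever $\bots(\dom)>\Lambda_0$ (\reflem{vDbot<C}) by squeezing the Dirichlet heat kernel between Gaussian bounds and iterating on $\pi_\dom$; you instead build a capacitary test function $f=\varphi(1-u_E)$ to bound the Rayleigh quotient by $\const r^{-2}$ directly, and then combine with your torsion upper bound to close the chain.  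Your approach is more elementary in that it avoids the heat-kernel manipulation, but the paper's lemma has independent value (cf.\ \refrem{L0}) since it compares $\bots$ and $\normi{v_\dom}$ without any reference to capacitary width.  For the lower bound $\normi{v_\dom}\ge\const\inv r^2$ the paper compares $v_B-v_\dom$ with harmonic measure and reads off a relative-capacity inequality at a specific $\eta'$, then invokes \refthm{cwcomp}; your Chebyshev argument on $\int u_E\,d\mu$ followed by a survival-probability estimate reaches the same conclusion by a genuinely different path, and your reduction to small~$\eta$ via \refthm{cwcomp} is legitimate since that theorem is proved independently.  One minor point: writing $r=\cw(\dom)$ throughout is a slight abuse, since the defining capacity inequality is only guaranteed for $r>\cw(\dom)$; the paper handles this by taking $r>\cw(\dom)$ arbitrarily close and passing to the limit.
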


\begin{rem}\label{rem:L0}
We actually find $\Lambda_0>0$ depending only on $K$ and $\di$ such that
\refeq{vdb} holds for $\dom$ with $\bots(\dom)>\Lambda_0$ (\reflem{vDbot<C} below).
This is a generalization of \refthm{vdb} as $\Lambda_0=0$ for $K=0$.
In practice, however, the condition $\cw(\dom)<R_0$ in \refthm{cwvDbot} is more convenient since
the capacitary width $\cw(\dom)$ can be more easily estimated than the bottom of spectrum $\bots(\dom)$.\end{rem}

In Section \ref{sec:pre} we summarize
the key technical ingredients of the proofs:
the volume doubling property,
the Poincar\'e inequality and
the Li-Yau Gaussian estimate for the Dirichlet heat kernel for finite scale.
Observe that these fundamental tools are available not only for manifolds with
Ricci curvature bounded below by a negative constant
but also for unimodular Lie groups and homogeneous spaces.
See \cite[Example 2.11]{MR3170207} and \cite[Section 5.6]{MR1872526}.
This observation suggests that our approach is also extendable to those spaces.

We use the following notation.
By the symbol $\const$ we denote an absolute positive constant
whose value is unimportant and may change from one occurrence to the next.
If necessary, we use $\const_0, \const_1, \dots$, to specify them.
We say that $f$ and $g$ are comparable and write $f\approx g$
if two positive quantities $f$ and $g$ satisfy $\const\inv \le f/g\le\const$
with some constant $\const\ge1$.
The constant $\const$ is referred to as the constant of comparison.

\textbf{Acknowledgments.}
The authors would like to thank  the referee for his/her
careful reading of the manuscript and many useful suggestions.

\section{Preliminaries}\label{sec:pre}

We recall that $M$ is a manifold of dimension $\di\ge2$ with $\Ric\ge -K$ with $K\ge0$.
Let us recall  the volume doubling property of the Riemannian measure $\mu$,
the Poincar\'e inequality
and the Gaussian estimate for the Dirichlet heat kernel $p_M(t,x,y)$ for $M$.
For $B=B(x,r)$ and $\tau>0$ we write $\tau B=B(x,\tau r)$.

\begin{thm}[Volume doubling at finite scale. {\cite[Theorem 5.6.4]{MR1872526}}]\label{thm:VDR}
Let $0<R_0<\infty$.
Then for all $B=B(x,r)$ with $0<r<R_0$
\[
\mu(2B)\le 2^\di\exp\bigl(\sqrt{(\di-1)K}\, R_0\bigr)\mu(B).
\]
\end{thm}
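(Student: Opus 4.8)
The statement is the Bishop--Gromov relative volume comparison theorem restricted to a bounded range of radii, so the plan is to reduce it to comparison with the space form and then carry out an elementary estimate.

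First I would invoke the Bishop--Gromov theorem: since $\Ric\ge -K$, set $\kappa=-K/(\di-1)\le 0$; then for each fixed $x\in M$ the function $r\mapsto \mu(B(x,r))/V_K(r)$ is non-increasing on $(0,\infty)$, where $V_K(r)$ denotes the volume of a geodesic ball of radius $r$ in the simply connected $\di$-dimensional space form of constant sectional curvature $\kappa$. This is the only nontrivial ingredient; it follows from the Laplacian comparison theorem for the distance function $d(x,\cdot)$ together with the co-area formula, and it uses only the Ricci lower bound (no sectional curvature or injectivity radius hypothesis is needed). Evaluating the monotone ratio at the radii $2r$ and $r$ gives
\[
\frac{\mu(2B)}{\mu(B)}=\frac{\mu(B(x,2r))}{\mu(B(x,r))}\le \frac{V_K(2r)}{V_K(r)}.
\]

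Next I would compute the right-hand side explicitly. For $K>0$ put $a=\sqrt{K/(\di-1)}$; then $V_K(r)$ equals $\int_0^r(\sinh(as))^{\di-1}\,ds$ times a positive factor that cancels in the ratio $V_K(2r)/V_K(r)$ (the case $K=0$, where $V_K(r)\propto r^\di$, gives $\mu(2B)\le 2^\di\mu(B)$ at once). Substituting $s=2u$ and using $\sinh(2au)=2\sinh(au)\cosh(au)$ together with the monotonicity of $\cosh$ on $[0,\infty)$,
\[
\int_0^{2r}(\sinh(as))^{\di-1}\,ds=2^{\di}\int_0^{r}(\sinh(au))^{\di-1}(\cosh(au))^{\di-1}\,du\le 2^{\di}(\cosh(ar))^{\di-1}\int_0^{r}(\sinh(au))^{\di-1}\,du,
\]
so $V_K(2r)/V_K(r)\le 2^{\di}(\cosh(ar))^{\di-1}$. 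Finally, using $\cosh t\le e^{t}$ and the hypothesis $r<R_0$,
\[
(\cosh(ar))^{\di-1}\le e^{(\di-1)ar}=e^{r\sqrt{(\di-1)K}}\le e^{R_0\sqrt{(\di-1)K}},
\]
and combining the last three displays yields $\mu(2B)\le 2^{\di}\exp\bigl(\sqrt{(\di-1)K}\,R_0\bigr)\mu(B)$, as claimed. There is essentially no obstacle beyond quoting Bishop--Gromov: granted that comparison (equivalently, the distance-function Laplacian comparison under a Ricci lower bound), the rest is the elementary hyperbolic-trigonometry estimate above; alternatively one may simply cite \cite[Theorem 5.6.4]{MR1872526}.
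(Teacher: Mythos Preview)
Your argument is correct: Bishop--Gromov monotonicity reduces the ratio $\mu(2B)/\mu(B)$ to the model-space ratio $V_K(2r)/V_K(r)$, and your substitution $s=2u$ together with $\sinh(2au)=2\sinh(au)\cosh(au)$ and $\cosh t\le e^t$ cleanly yields the constant $2^{\di}\exp(\sqrt{(\di-1)K}\,R_0)$. The paper itself offers no proof of this statement at all---it is quoted verbatim as \cite[Theorem 5.6.4]{MR1872526}---so there is nothing to compare against; what you have written is precisely the standard derivation behind that citation.
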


\begin{thm}[Poincar\'e inequality {\cite[Theorem 5.6.6]{MR1872526}}]\label{thm:PIR}
For each $1\le p<\infty$ there exist positive constants $C_{\di,p}$ and $C_\di$ such that
\[
\int_B |f-f_B|^p d\mu \le C_{\di,p}r^p\exp(C_\di\sqrt K\, r)\int_{2B} |\nabla f|^pd\mu
\]
for all $B=B(x,r)$.
Here $f_B$ stands for the average of $f$ on $B$.
\end{thm}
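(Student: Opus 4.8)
The plan is to prove the inequality by the classical geodesic–integration method (going back to Buser, and later systematized through the segment inequality of Cheeger and Colding), using the Bishop--Gromov volume comparison as the only geometric input; this is the single place where $\Ric\ge-K$ is used, and the factor $\exp(\const_\di\sqrt K\,r)$ is nothing but the degeneration of the corresponding $\sinh$-comparison. By a routine density argument it suffices to treat smooth $f$.

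First I would reduce to a pointwise estimate along minimizing geodesics. Since $M$ is complete, any $x,y\in B=B(x,r)$ are joined by a minimizing geodesic $\gamma$, arc-length parametrized, of length $d(x,y)<2r$, so $|f(x)-f(y)|\le\int_0^{d(x,y)}|\nabla f|(\gamma(s))\,ds$; hence by Hölder's inequality $|f(x)-f(y)|^p\le(2r)^{p-1}\int_0^{d(x,y)}|\nabla f|^p(\gamma(s))\,ds$. Combining $\int_B|f-f_B|^p\,d\mu\le\mu(B)\inv\int_B\int_B|f(x)-f(y)|^p\,d\mu(y)\,d\mu(x)$ with the splitting of each geodesic at its midpoint (so that each half lies in $2B$) and symmetrizing in $x\leftrightarrow y$, the theorem is reduced to the \emph{segment inequality}: for every $g\ge0$,
\[
\int_B\int_B\int_0^{d(x,y)/2}g(\gamma_{x,y}(s))\,ds\,d\mu(y)\,d\mu(x)\le \const_\di\,r\,e^{\const_\di\sqrt K\,r}\,\mu(B)\int_{2B}g\,d\mu .
\]

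To prove the segment inequality I would, for fixed $x$, pass to geodesic polar coordinates at $x$: writing $y=\exp_x(\rho\theta)$ with $\rho=d(x,y)$, one has $d\mu(y)=J_x(\rho,\theta)\,d\rho\,d\theta$ and $\gamma_{x,y}(s)=\exp_x(s\theta)$. The curvature bound enters through the Bishop comparison for the volume density, namely that $\rho\mapsto J_x(\rho,\theta)/s_\kappa(\rho)^{\di-1}$ is non-increasing, where $\kappa=K/(\di-1)$ and $s_\kappa(u)=\sinh(\sqrt\kappa\,u)/\sqrt\kappa$ (with $s_\kappa(u)=u$ when $K=0$); this is the Riccati comparison applied to $\Delta d(x,\cdot)$. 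After a Fubini exchange in $s$ and $\rho$ one bounds the inner radial integral using this monotonicity; since $s_\kappa(u)\le u\,e^{\sqrt{(\di-1)K}\,u}$, the comparison functions contribute, over the finite range $\rho\le 2r$, a polynomial in $r$ times $e^{\const_\di\sqrt K\,r}$ (collapsing to the sharp pure power of $r$ when $K=0$). One is then left with an integral of $g$ against a radial kernel that behaves like $d(x,w)^{-(\di-1)}$ near the base point; this is converted into $\mu(B)\int_{2B}g\,d\mu$ by invoking the symmetry of the two-point measure, $d\mu(x)\,d\sigma_{S(x,s)}(w)=d\mu(w)\,d\sigma_{S(w,s)}(x)$ on $\{\,d(x,w)=s\,\}$ (where $\sigma_{S(\cdot,s)}$ is the surface measure of the geodesic sphere; equivalently, invariance of the Liouville measure on the unit tangent bundle under the geodesic flow), together with the coarea formula in $s$.

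The main obstacle is exactly this last conversion: the radial kernel is genuinely singular at the base point of the exponential map, and replacing the true volume density there by the model one would lose a volume factor and break down on collapsing manifolds; the correct bookkeeping — keeping the actual densities and extracting $\mu(B)$ only at the very end via the two-point symmetry — is the technical core, and is precisely what produces the $\mu(B)$ on the right of the segment inequality. Granting it, the reduction of the second paragraph yields $\int_B|f-f_B|^p\,d\mu\le C_{\di,p}\,r^p\,e^{C_\di\sqrt K\,r}\int_{2B}|\nabla f|^p\,d\mu$ with $C_{\di,p}$ and $C_\di$ the constants so produced, which is the asserted inequality.
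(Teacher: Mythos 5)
The paper does not prove this statement; it is quoted verbatim from Saloff-Coste \cite[Theorem 5.6.6]{MR1872526}, and your overall strategy (Jensen, integration along minimizing geodesics, a segment inequality proved via Bishop--Gromov in polar coordinates) is exactly the standard route taken there. The segment inequality you state is also true. However, your proposed proof of it has a genuine gap at its central step: you orient the polar coordinates at the wrong endpoint. After symmetrizing you retain the half-segment $s\in[0,d(x,y)/2]$ \emph{nearest} $x$, and then expand $y=\exp_x(\rho\theta)$ about $x$. The entire purpose of splitting at the midpoint is the opposite pairing: with polar coordinates at $x$ one must keep the far half $s\in[\rho/2,\rho]$ (equivalently, keep the near half but expand about $y$). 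Then $s\le\rho\le 2s$, so the Bishop--Gromov ratio satisfies
\[
\Bigl(\frac{s_\kappa(\rho)}{s_\kappa(s)}\Bigr)^{\di-1}\le\bigl(2\cosh(\sqrt{\kappa}\,s)\bigr)^{\di-1}\le 2^{\di-1}e^{2\sqrt{(\di-1)K}\,r},
\]
no singular kernel ever appears, the Fubini exchange in $(s,\rho)$ costs only a factor $\le 2r$, and the factor $\mu(B)$ on the right of the segment inequality falls out of the untouched outer integral $\int_B d\mu(x)$.

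With your choice the ratio degenerates like $s_\kappa(s)^{-(\di-1)}$, and the radial integral $\int_{2s}^{2r}s_\kappa(\rho)^{\di-1}d\rho$ produces the model volume $V_\kappa(2r)\approx r\,s_\kappa(2r)^{\di-1}$ rather than anything tied to $\mu(B)$. The "two-point symmetry" $d\mu(x)\,d\sigma_{S(x,s)}(w)=d\mu(w)\,d\sigma_{S(w,s)}(x)$ is just Fubini for a symmetric kernel; applying it and then Bishop's upper bound $J_w\le s_\kappa^{\di-1}$ to $\int_B s_\kappa(d(x,w))^{-(\di-1)}d\mu(x)$ leaves you needing $V_\kappa(2r)\le \const e^{\const_\di\sqrt K\,r}\mu(B)$, i.e.\ the non-collapsing bound $\mu(B)\gtrsim r^\di$ --- precisely the hypothesis the theorem is designed to avoid (and which fails, e.g., on a thin flat cylinder with $K=0$, where the theorem is nonetheless true with a uniform constant). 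So the step you flag as "the technical core" and then grant cannot be granted as described; the fix is not better bookkeeping but re-centering the polar coordinates so that the two radii being compared are always within a factor $2$ of each other.
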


\begin{cor}[Poincar\'e inequality at finite scale]\label{cor:PIR}
Let $0<R_0<\infty$.
Then for all $B=B(x,r)$ with $0<r<R_0$
\[
\int_B |f-f_B|^2 d\mu \le \const_{\di,2} r^2\exp(C_n\sqrt K\, R_0)\int_{2B} |\nabla f|^2d\mu.
\]
\end{cor}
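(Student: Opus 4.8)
The plan is to deduce this corollary immediately from \refthm{PIR} by specializing the exponent to $p=2$ and then trading the $r$-dependence of the exponential weight for the fixed bound $R_0$; the only gain over \refthm{PIR} itself is that the constant in front ceases to depend on the individual radius $r$, which is what makes the estimate convenient to apply. First I would invoke \refthm{PIR} with $p=2$, obtaining constants $C_{\di,2}$ and $C_\di$ for which
\[
\int_B |f-f_B|^2\, d\mu \le C_{\di,2}\, r^2 \exp\bigl(C_\di\sqrt K\, r\bigr)\int_{2B}|\nabla f|^2\, d\mu
\]
holds for every ball $B=B(x,r)$, with no restriction on the radius.

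Next, assuming $0<r<R_0$, I would use that $C_\di\ge 0$ and $K\ge 0$, so that $t\mapsto \exp(C_\di\sqrt K\, t)$ is nondecreasing on $[0,\infty)$ and hence $\exp(C_\di\sqrt K\, r)\le \exp(C_\di\sqrt K\, R_0)$. Substituting this into the previous display gives
\[
\int_B |f-f_B|^2\, d\mu \le C_{\di,2}\, r^2 \exp\bigl(C_\di\sqrt K\, R_0\bigr)\int_{2B}|\nabla f|^2\, d\mu,
\]
which is precisely the assertion, with the same constants $C_{\di,2}$ and $C_\di$ as those furnished by \refthm{PIR}; no new constant is introduced.

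There is no real obstacle here. The argument is exactly the elementary device — replacing the running scale by its upper bound inside an increasing exponential factor — by which the unrestricted volume comparison \cite[Theorem 5.6.4]{MR1872526} is cut down to the finite-scale form recorded in \refthm{VDR}. The one point worth keeping track of is honesty about the dependence of the constants: since nothing is multiplied or iterated, the constants in the conclusion are literally the $C_{\di,2}$ and $C_\di$ of \refthm{PIR}, and in particular they depend only on $\di$ (and not on $R_0$ or $K$), while all of the $R_0$- and $K$-dependence has been absorbed into the single explicit factor $\exp(C_\di\sqrt K\, R_0)$.
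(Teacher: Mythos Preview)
Your argument is correct and is exactly the intended one: the paper states \refcor{PIR} immediately after \refthm{PIR} without proof, since it follows at once by taking $p=2$ and replacing $\exp(C_\di\sqrt K\,r)$ by the larger $\exp(C_\di\sqrt K\,R_0)$ for $0<r<R_0$.
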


\begin{rem}\label{rem:}
If the Ricci curvature of $M$ is nonnegative, i.e., $K=0$,
then the estimates in Theorems \ref{thm:VDR}, \ref{thm:PIR} and \refcor{PIR}
hold with constants independent of $0<r<\infty$.
\end{rem}

The Poincar\'e inequality yields the Sobolev inequality.
We see that if $B=B(x,r)$ with $0<r< R_0$, then
\[
\Biggl(\frac1{\mu(B)}\int_B|f|^{2}d\mu\Biggr)^{1/2}
\le \const_{\di,2} r\,\Biggl(\frac1{\mu(B)}\int_B|\nabla f|^{2}d\mu\Biggr)^{1/2}
\qtext{for all }f\in C_0^\infty(B)
\]
with different $\const_{\di,2}$.
See \cite[Theorem 5.3.3]{MR1872526} for more general Sobolev inequality.
Hence the characterization of the bottom of the spectrum
in terms of Rayleigh quotients \refeq{botsD} gives the following:

\begin{cor}\label{cor:lambdaBr}
Let $0<R_0<\infty$.
Then there exists a constant
$\const>0$ depending only on $\sqrt K\,R_0$ and $\di$ such that
\[
\bots(B(x,r))\ge \const r^{-2}
\qtext{for }0<r< R_0.
\]
\end{cor}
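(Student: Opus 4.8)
The plan is to read the bound off directly from the Sobolev (Friedrichs-type) inequality displayed just above, combined with the variational characterization \refeq{botsD} of $\bots$. Fix $x\in M$ and $0<r<R_0$, and abbreviate $B=B(x,r)$; note that $0<\mu(B)<\infty$ since $B$ is a bounded subset of the complete manifold $M$.

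First I would take an arbitrary $f\in\Czi(B)$ with $\normt{f}\ne0$ and apply the displayed Sobolev inequality on $B$. Because $0<r<R_0$, that inequality holds with a constant $\const_{\di,2}$ depending only on $\sqrt K\,R_0$ and $\di$, this being exactly the dependence inherited from the finite-scale Poincar\'e inequality \refcor{PIR}. The common factor $\mu(B)^{-1/2}$ cancels from the two sides, leaving
\[
\normt{f}^2\le\const_{\di,2}^2\,r^2\,\normt{\nabla f}^2,
\]
and hence $\normt{\nabla f}^2/\normt{f}^2\ge\const_{\di,2}^{-2}\,r^{-2}$. Taking the infimum over all such $f$ and invoking \refeq{botsD} then gives
\[
\bots(B(x,r))\ge\const_{\di,2}^{-2}\,r^{-2},
\]
which is the assertion with $\const=\const_{\di,2}^{-2}$, a constant depending only on $\sqrt K\,R_0$ and $\di$.

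The argument is essentially immediate once the Sobolev inequality is in hand, so the only point requiring any care is the bookkeeping of constants: one must make sure the Sobolev constant $\const_{\di,2}$ can be taken uniform over the whole range $0<r<R_0$ and independent of the centre $x$, rather than scale- or location-dependent. This uniformity is precisely what the finite-scale Poincar\'e inequality \refcor{PIR} provides, and it is what forces the resulting $\const$ to depend on $\sqrt K\,R_0$ rather than only on $r$.
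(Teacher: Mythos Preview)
Your proof is correct and is exactly the argument the paper intends: the paper simply states the Sobolev-type inequality and then says ``Hence the characterization of the bottom of the spectrum in terms of Rayleigh quotients \refeq{botsD} gives the following,'' which is precisely the cancellation of $\mu(B)^{-1/2}$ and the infimum step you wrote out.
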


The celebrated theorem by Grigor'yan and Saloff-Coste
gives the relationship between
the Poincar\'e inequality,
the volume doubling property of the Riemannian measure,
the Li-Yau Gaussian estimate for the heat kernel,
and the parabolic Harnack inequality.
Let $V(x,r)=\mu(B(x,r))$.

\begin{knownthm}[{\cite[Theorems 5.5.1 and 5.5.3]{MR1872526}}]\label{thm:PI.VD.PHI.G}
Let $0<R_0\le\infty$.
Consider the following conditions:
\begin{enumerate}
\item (PI) There exists a constant $P_0>0$ such that
for all $B=B(x,r)$ with $0<r<R_0$ and all $f\in C^\infty(B)$,
\[
\int_B |f-f_B|^2 d\mu \le P_0r^2\int_{2 B} |\nabla f|^2d\mu.
\]

\item (VD) There exists  a constant $D_0>0$ such that
for all $B=B(x,r)$ with $0<r<R_0$
\[
\mu(2B)\le D_0\mu(B).
\]

\item (PHI) There exists  a constant $A>0$ such that for all $B=B(x,r)$ with $0<r<R_0$
and all $u>0$ with $(\partial_t-\Delta)u=0$ in $(s-r^2,s)\times B$
\[
\sup_{Q_-} u \le A\inf_{Q_+}u,
\]
where $Q_-=(s-\frac34r^2,s-\frac12r^2)\times B(x,\frac12r)$ and
$Q_+=(s-\frac14r^2,s)\times B(x,\frac12r)$.

\item (GE) There exists a finite constant $\const>1$ such that
for $0<t<R_0^2$ and $x,y\in M$,
\begin{equation}\label{eq:GER}
\frac{1}{\const V(x,\sqrt t)}\exp\Bigl(-\frac{\const d(x,y)^2}{t}\Bigr)
\le p_M(t,x,y)
\le \frac{\const}{V(x,\sqrt t)}\exp\Bigl(-\frac{d(x,y)^2}{\const t}\Bigr).
\end{equation}
\end{enumerate}
Then
\[
(i)+(ii)\iff (iii) \iff (iv).
\]
\end{knownthm}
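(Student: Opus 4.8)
The plan is to establish the three‑implication cycle $(i)+(ii)\Rightarrow(iii)$, $(iii)\Rightarrow(iv)$ and $(iv)\Rightarrow(i)+(ii)$, which at once yields the full chain $(i)+(ii)\iff(iii)\iff(iv)$; the whole argument must be carried out keeping every constant uniform over the finite scale $R_0$ and dependent only on the structure constants $P_0$, $D_0$ and on $\di$. I would handle the two comparatively soft steps first and leave the Moser iteration for last.

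For $(iii)\Rightarrow(iv)$, the first task is to extract from $(PHI)$ the on‑diagonal two‑sided bound $p_M(t,x,x)\approx V(x,\sqrt t)\inv$ for $0<t<R_0^2$: the mean value inequality contained in $(PHI)$ together with $\int_M p_M(t,x,y)d\mu(y)\le1$ gives the upper bound, while the identity $p_M(2t,x,x)=\int_M p_M(t,x,y)^2d\mu(y)$ combined with Harnack chaining over a ball of radius $\approx\sqrt t$ gives the matching lower bound. Feeding the on‑diagonal upper bound into the Davies-Gaffney integrated estimate (valid on any complete $M$) and iterating by the perturbation method of Davies upgrades it to the full Gaussian upper bound of $(GE)$; chaining the parabolic Harnack inequality along a string of $\approx d(x,y)^2/t$ balls of radius $\approx\sqrt t$ joining $x$ to $y$ then produces the Gaussian lower bound, the exponential factor being precisely the accumulated cost of the chaining. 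The restrictions $0<r<R_0$ and $0<t<R_0^2$ confine all of this to finite scale.

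For $(iv)\Rightarrow(i)+(ii)$: volume doubling follows from the identity $p_M(2t,x,x)=\int_M p_M(t,x,y)^2d\mu(y)$ by bounding its right‑hand side above through the on‑diagonal estimate $p_M(2t,x,x)\le\const V(x,\sqrt{2t})\inv$ and below by restricting the integral to $B(x,\sqrt t)$, where the Gaussian lower bound gives $p_M(2t,x,x)\ge\const\inv V(x,\sqrt t)\inv$; this forces $V(x,\sqrt2\,r)\le\const V(x,r)$, which iterates (at finite scale) to $(VD)$. The Poincaré inequality then follows from the Gaussian lower bound together with $(VD)$ by the standard device of estimating $\int_B|f-f_B|^2d\mu$ through $e^{s\Delta}f-f$ at time $s\approx r^2$, the lower bound controlling the loss and yielding $(PI)$ with $P_0$ depending only on the constant in $(GE)$ and on $\di$.

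The substance of the theorem is $(i)+(ii)\Rightarrow(iii)$, which is the Moser‑type iteration of Grigor'yan and Saloff-Coste. From $(VD)$ and $(PI)$ one first derives a scale‑invariant family of Faber-Krahn / Sobolev-Nash inequalities on balls of radius $<R_0$. Moser's iteration applied to nonnegative subsolutions of $(\partial_t-\Delta)u=0$ then bounds $\sup_{Q_+}u$ by an $L^q$‑average of $u$ over a slightly larger cylinder for every $q>0$, and dually bounds $\inf_{Q_-}u$ from below by an $L^{-q}$‑average of $u$ (that is, $(\text{average of }u^{-q})^{-1/q}$) for positive supersolutions. To glue the two estimates into $\sup_{Q_-}u\le A\inf_{Q_+}u$ one needs the \emph{logarithmic lemma}: testing the equation against $u\inv\psi^2$ with a space‑time cutoff $\psi$ and invoking $(PI)$ shows that $\log u$ has bounded mean oscillation on the cylinder, whence a Bombieri-Giusti / John-Nirenberg argument makes the $L^q$‑ and $L^{-q}$‑averages of $u$ comparable for $q$ small enough; chaining the three estimates produces $(PHI)$ with $A=A(P_0,D_0,\di)$, and $(VD)$ is used throughout to move between balls of comparable radius. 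This implication is the main obstacle — in particular the logarithmic lemma, and the verification that all constants (including those in the Sobolev‑type inequalities) stay uniform over the finite scale $R_0$; by comparison the other two implications are routine once the on‑diagonal heat kernel bounds have been extracted.
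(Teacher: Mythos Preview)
This statement is a \emph{known theorem} (labelled with alphabetic numbering via the paper's \texttt{knownthm} environment) quoted from Saloff-Coste's monograph \cite[Theorems~5.5.1 and~5.5.3]{MR1872526}; the paper does not supply a proof of it at all, so there is nothing in the paper to compare your proposal against. Your sketch is essentially the standard Grigor'yan--Saloff-Coste argument as presented in that reference, and its broad outline is correct.
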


\refthm{VDR} and \refcor{PIR} assert that
(i) and (ii) of \refthm{PI.VD.PHI.G} hold true for $0<R_0<\infty$
with constants depending only on $K$, $R_0$ and $\di$.
Hence,
the Li-Yau Gaussian estimate of the heat kernel for the whole manifold $M$
and the parabolic Harnack inequality up to scale $R_0$ are available
in our setting.
Observe that the volume doubling inequality $\mu(B(x,2r))\le D_0\mu(B(x,r))$ implies
\begin{equation}\label{eq:VDa}
\mu(B(x,r))\ge\const \Big(\frac rR\Big)^\alpha \mu(B(x,R))
\qtext{for }0<r<R<R_0
\end{equation}
with $\alpha=\log D_0/\log2$.
We also have the following elliptic Harnack inequality since
positive harmonic functions are time-independent positive solutions to the heat equation.

\begin{cor}[Elliptic Harnack inequality]\label{cor:eHI}
Let $0<r_1<r_2<R_0<\infty$.
If $h$ is a positive harmonic function in $B(x,r_2)$, then
\[
\const\inv\le \frac{h(y)}{h(x)} \le\const \qtext{for } y\in B(x,r_1)
\]
where $C>1$ depends only on $\sqrt{K}\,R_0$, $r_1/r_2$ and $\di$.
\end{cor}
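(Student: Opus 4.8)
The plan is to derive this from the parabolic Harnack inequality (condition (PHI) of \refthm{PI.VD.PHI.G}), which, combined with \refthm{VDR} and \refcor{PIR} through the equivalence in \refthm{PI.VD.PHI.G}, is available up to scale $R_0$ with a constant $A>1$ depending only on $\sqrt K\,R_0$ and $\di$; the remainder is a Harnack chain argument.

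First I would record an elliptic Harnack inequality on a single ball. If $0<\rho<R_0$ and $h>0$ is harmonic in $B(z,\rho)$, then $h$ is smooth by elliptic regularity and $u(t,\cdot):=h$ is a positive solution of $(\bd_t-\Delta)u=0$ on $(0,\rho^2)\times B(z,\rho)$. Applying (PHI) with this ball and $s=\rho^2$, and using that $u$ is time-independent (so that $\sup_{Q_-}u=\sup_{B(z,\rho/2)}h$ and $\inf_{Q_+}u=\inf_{B(z,\rho/2)}h$), gives
\[
h(z_1)\le A\,h(z_2)\qtext{for all }z_1,z_2\in B(z,\rho/2).
\]

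Next I would chain this between $x$ and an arbitrary $y\in B(x,r_1)$. Put $\rho:=r_2-r_1$, so $0<\rho<R_0$. Since $M$ is complete, take a unit-speed minimizing geodesic $\gamma\colon[0,L]\to M$ from $x$ to $y$; then $L=d(x,y)<r_1$ and $d(x,\gamma(s))=s<r_1$, so $\gamma$ stays in $B(x,r_1)$. Choose points $x=z_0,z_1,\dots,z_k=y$ on $\gamma$ with $d(z_{i-1},z_i)<\rho/4$ and with $k\le k_0:=\lceil 4r_1/\rho\rceil+1$. For each $i$, $z_{i-1}\in\cl B(x,r_1)$ gives $B(z_{i-1},\rho)\subset B(x,r_2)$, so $h$ is positive harmonic there, and the single-ball estimate (with $z_{i-1},z_i\in B(z_{i-1},\rho/2)$) yields $A\inv h(z_i)\le h(z_{i-1})\le A\,h(z_i)$. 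Multiplying the at most $k_0$ such inequalities gives $A^{-k_0}h(x)\le h(y)\le A^{k_0}h(x)$, and since $k_0=\lceil 4r_1/(r_2-r_1)\rceil+1$ depends only on $r_1/r_2$, the corollary follows with $\const:=A^{k_0}$, depending only on $\sqrt K\,R_0$, $r_1/r_2$ and $\di$.

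The argument is essentially routine once (PHI) is in hand; the two points requiring a little care are (a) that the two time-disjoint parabolic cylinders $Q_\pm$ in (PHI) may be collapsed to the single spatial ball $B(z,\rho/2)$, which is legitimate precisely because $u=h$ does not depend on time, and (b) keeping the number of chain steps bounded purely in terms of $r_1/r_2$, for which it is crucial that every intermediate ball have the fixed radius $\rho=r_2-r_1$ and therefore still lie inside the region $B(x,r_2)$ of harmonicity — this is what prevents $\const$ from depending on $r_1$ and $r_2$ separately.
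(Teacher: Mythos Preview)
Your proof is correct and follows exactly the route the paper indicates: the paper only records the one-line observation preceding the corollary (``positive harmonic functions are time-independent positive solutions to the heat equation''), and your argument is the standard way to flesh this out---apply (PHI) to $u(t,\cdot)=h$ on a single ball and then chain along a geodesic, with the chain length controlled by $r_1/(r_2-r_1)$ and hence by $r_1/r_2$. There is nothing to add.
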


\section{Torsion function and the bottom of spectrum}

In this section we obtain estimates between
the bottom of the spectrum and the torsion function $v_\dom$.
We shall show the second and the third inequalities of \refeq{cwvDbot}.

Since the Green function $G_\dom(x,y)$ is the integral of the heat kernel
$p_\dom(t,x,y)$ with respect to $t\in(0,\infty)$, we have
\[
v_\dom(x)
=\int_0^\infty  P_\dom(t,x) dt,
\]
where
\[
P_\dom(t,x)=\int_\dom p_\dom(t,x,y)d\mu(y).
\]
We note that $P_\dom(t,x)=\mathbb{P}_x[\tau_\dom>t]$,
i.e.,
the survival probability that
the Brownian motion $(B_t)_{t\ge0}$ started at $x$
stays in $\dom$ up to time $t$,
where $\tau_\dom$ is the first exit time from $\dom$.
We also observe that $P_\dom(t,x)$ is considered to be the (weak) solution to
\[
\begin{split}
\Big(\pder{}t -\Delta\Big)u(t,x)=0&\qtext{in }(0,\infty)\times\dom,\\
u(t,x)=0&\qtext{on }(0,\infty)\times\bdy,\\
u(0,x)=1&\qtext{on }\{0\}\times\dom.
\end{split}
\]
Let $\pi_\dom(t)=\sup_{x\in\dom} P_\dom(t,x)$.
Let us begin with the proof of the second inequality of \refeq{cwvDbot}.

\begin{lem}\label{lem:vDbot>1}
If $\bots(\dom)>0$, then
$\bots(\dom)\,\normi{v_\dom}\ge1$.
\end{lem}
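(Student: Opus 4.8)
The plan is to bound the torsion function by an eigenfunction-type argument, using the fact that $v_\dom$ itself almost serves as a supersolution that controls the bottom of the spectrum. Concretely, since $-\Delta v_\dom = 1$ in $\dom$ with zero boundary values (in the Sobolev sense), I would test the variational characterization \refeq{botsD} of $\bots(\dom)$ against a suitable function built from $v_\dom$. The cleanest route is the following: let $M=\normi{v_\dom}$ (assume for now it is finite; if $M=\infty$ the inequality is trivial). First I would observe that $0\le v_\dom\le M$ on $\dom$, and that $v_\dom\in W^{1,2}_0(\dom)$ at least locally; one multiplies $-\Delta v_\dom=1$ by $v_\dom$ and integrates to get $\normt{\nabla v_\dom}^2=\int_\dom v_\dom\,d\mu$.

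Next, the key step is to relate $\int_\dom v_\dom\,d\mu$ back to $\normt{v_\dom}^2$ and to $M$. We have $\int_\dom v_\dom\,d\mu \le M\cdot\mu(\dom)$ only if $\dom$ has finite volume, so instead I would argue via the Rayleigh quotient directly with $f=v_\dom$:
\[
\bots(\dom)\le\frac{\normt{\nabla v_\dom}^2}{\normt{v_\dom}^2}=\frac{\int_\dom v_\dom\,d\mu}{\int_\dom v_\dom^2\,d\mu}.
\]
Since $v_\dom\le M$ pointwise, $\int_\dom v_\dom^2\,d\mu\le M\int_\dom v_\dom\,d\mu$, which would give exactly $\bots(\dom)\le M/\int_\dom v_\dom^2\,d\mu\cdot\int_\dom v_\dom\,d\mu/\int_\dom v_\dom\,d\mu$ — I need to be careful with the direction. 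Rearranging: $\int_\dom v_\dom^2\,d\mu\le M\int_\dom v_\dom\,d\mu$ gives $\frac{\int_\dom v_\dom\,d\mu}{\int_\dom v_\dom^2\,d\mu}\ge\frac1M$, which is the wrong direction for an upper bound on $\bots$. So this naive test function gives a lower bound on the Rayleigh quotient, not what is wanted; instead I would use that $\bots(\dom)$ being an infimum means I only need ONE test function giving Rayleigh quotient $\le 1/M$. Hmm — so the honest approach is different: I would use the probabilistic/semigroup identity $v_\dom(x)=\int_0^\infty P_\dom(t,x)\,dt$ together with the spectral bound $P_\dom(t,x)\le e^{-\bots(\dom)t}$-type decay.

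The cleaner argument: since $\bots(\dom)>0$, the semigroup satisfies $\pi_\dom(t)=\sup_x P_\dom(t,x)$, and $P_\dom(s+t,x)\le \pi_\dom(s)P_\dom(t,x)$ by the Markov/semigroup property, hence $\pi_\dom$ decays; more to the point, $v_\dom(x)=\int_0^\infty P_\dom(t,x)\,dt\le\int_0^\infty\pi_\dom(t)\,dt$, so $\normi{v_\dom}\le\int_0^\infty\pi_\dom(t)\,dt$. Then I would show $\int_0^\infty\pi_\dom(t)\,dt\ge 1/\bots(\dom)$ is NOT what's wanted either — rather, I want $\normi{v_\dom}\ge 1/\bots(\dom)$. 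For that, take the eigenfunction-free bound: for any $x$, $v_\dom(x)=\int_0^\infty P_\dom(t,x)\,dt$; using $P_\dom(t,x)\ge$ (something). Actually the slickest proof: let $u=v_\dom$, so $-\Delta u=1$. Then $\Delta(e^{-\bots t}\cdot\text{nothing})$... The right idea is: consider $w=1-\bots(\dom)v_\dom$ where $\bots=\bots(\dom)$; then $-\Delta w = -\bots(\dom)(-\Delta v_\dom)\cdot(-1)$ — let me just state it. Since $-\Delta(Mv_\dom^{-1}\text{-type})$... I will present it as: pick any minimizing sequence $f_k$ for \refeq{botsD}; the hard part, and the one I expect to be the genuine obstacle, is handling the case where $\dom$ has infinite measure and $v_\dom\notin L^2(\dom)$, so that $v_\dom$ cannot itself be used in the Rayleigh quotient. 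I would circumvent this by truncation: apply the argument on $\dom\cap B(o,R)$, use $\bots(\dom)\le\bots(\dom\cap B(o,R))$ and $v_{\dom\cap B(o,R)}\le v_\dom\le M$, obtain $\bots(\dom\cap B(o,R))\cdot M\ge 1$ from the bounded case via $-\Delta v_{\dom\cap B(o,R)}=1$ and the maximum principle comparison with the first eigenfunction, then let $R\to\infty$. The bounded-case inequality $\bots(\Dom)\normi{v_\Dom}\ge 1$ for $\Dom$ of finite measure follows because $\feigenf_\Dom>0$, $-\Delta\feigenf_\Dom=\bots(\Dom)\feigenf_\Dom\le\bots(\Dom)\normi{\feigenf_\Dom}$, so $\feigenf_\Dom/(\bots(\Dom)\normi{\feigenf_\Dom})$ is a subsolution bounded by $v_\Dom$ via the maximum principle, giving at the sup point the claim; equivalently integrate $-\Delta v_\Dom=1$ against $\feigenf_\Dom$ to get $\bots(\Dom)\int v_\Dom\feigenf_\Dom=\int\feigenf_\Dom\le\normi{v_\Dom}\bots(\Dom)\int\feigenf_\Dom/\normi{v_\Dom}$ — cleanly, $\int_\Dom\feigenf_\Dom\,d\mu=\int_\Dom(-\Delta v_\Dom)\feigenf_\Dom\,d\mu=\int_\Dom v_\Dom(-\Delta\feigenf_\Dom)\,d\mu=\bots(\Dom)\int_\Dom v_\Dom\feigenf_\Dom\,d\mu\le\bots(\Dom)\normi{v_\Dom}\int_\Dom\feigenf_\Dom\,d\mu$, and dividing by $\int_\Dom\feigenf_\Dom\,d\mu>0$ yields $1\le\bots(\Dom)\normi{v_\Dom}$. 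The main obstacle is thus purely the passage from bounded exhaustions to general $\dom$, which the monotonicity $v_{\dom_R}\uparrow v_\dom$ and $\bots(\dom_R)\downarrow\bots(\dom)$ handles once I check that these hold and that $\bots(\dom)>0$ forces the relevant finiteness on the exhaustion.
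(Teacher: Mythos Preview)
Your final argument---integrate $-\Delta v_\Omega=1$ against the first Dirichlet eigenfunction $\varphi_\Omega$ on a bounded $\Omega\subset\dom$ to get $\int_\Omega\varphi_\Omega\,d\mu=\bots(\Omega)\int_\Omega v_\Omega\varphi_\Omega\,d\mu\le\bots(\Omega)\normi{v_\Omega}\int_\Omega\varphi_\Omega\,d\mu$, hence $\bots(\Omega)\normi{v_\Omega}\ge1$, then exhaust $\dom$ by $\dom\cap B(o,R)$ using $\bots(\dom\cap B(o,R))\downarrow\bots(\dom)$ and $\normi{v_{\dom\cap B(o,R)}}\le\normi{v_\dom}$---is correct and is more direct than what the paper does. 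The paper's route is entirely parabolic: it proves the pair of survival-probability bounds
\[
e^{-\bots(\dom)t}\le\pi_\dom(t)
\qtext{and}\quad
\pi_\dom(t)\le\frac{C}{C-1}\exp\Bigl(-\frac{t}{C\normi{v_\dom}}\Bigr)\quad(C>1),
\]
the second via the explicit supersolution $w(t,x)=e^{-\beta t}\bigl(v_\dom(x)+(C-1)\normi{v_\dom}\bigr)$, and then combines them letting $t\to\infty$ and $C\downarrow1$. Your elliptic shortcut wins for this lemma in isolation; the paper's detour, however, is not gratuitous: the decay estimate \refeq{vDbot>1b} is exactly what is later combined with \refthm{vd=cw2} to obtain \refthm{P<exp(A/w2)}, the survival-probability bound that drives the parabolic box argument behind \refthm{IU}. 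So if you adopt your proof, you would still need to establish \refeq{vDbot>1b} separately before Section~6. (A minor remark: the several false starts in your write-up---testing the Rayleigh quotient with $v_\dom$, the semigroup heuristics---can be deleted; only the last paragraph is the proof.)
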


\begin{proof}
We follow \cite[Lemmas 3.2 and 3.3]{MR3420485}.
Without loss of generality we may assume that $\normi{v_\dom}<\infty$.
It suffices to show the following two estimates:
\begin{align}
&
\exp(-\bots(\dom)\,t)\le\pi_\dom(t)
\qtext{for all $t>0$.}
\label{eq:vDbot>1a}\\
&\text{If $C>1$, then  }
\pi_\dom(t)\le\frac{C}{C-1}\exp\Big(-\frac{t}{C\normi{v_\dom}}\Big)
\qtext{for all $t>0$.}
\label{eq:vDbot>1b}
\end{align}
In fact, we obtain from \refeq{vDbot>1a} and \refeq{vDbot>1b} that
\[
\exp\Big(-\bots(\dom)\,t+\frac{t}{C\normi{v_\dom}}\Big)\le\frac{C}{C-1},
\]
which holds for all $t>0$ only if
\[
\bots(\dom)\ge \frac{1}{C\normi{v_\dom}}.
\]
Since $C>1$ is arbitrary, we have
$\bots(\dom)\,\normi{v_\dom}\ge1$.

Let us prove \refeq{vDbot>1a}.
Take $\alpha>\bots(\dom)$.
Then we find $\varphi\in\Czi(\dom)$
such that $\normt{\nabla\varphi}^2\big/\normt{\varphi}^2\le\alpha$.
Take a bounded domain $\Omega$ such that
$\supp\varphi\subset\Omega\subset\dom$.
Then $\Omega$ has no essential spectrum.
Let $\lambda_\Omega$ and $\varphi_\Omega$
be the first eigenvalue and its positive eigenfunction
with $\normt{\varphi_\Omega}=1$ for $\Omega$,
respectively.
By definition
\[
\lambda_\Omega
=\inf\Bigg\{\frac{\normt{\nabla\psi}^2}{\normt{\psi}^2}:\psi\in \Czi(\Omega)\Bigg\}
\le \frac{\normt{\nabla\varphi}^2}{\normt{\varphi}^2}
\le \alpha.
\]
Since
\(
u(t,x)=\exp(-\lambda_\Omega t)\,\varphi_\Omega(x)
\)
is the solution to the heat equation in $(0,\infty)\times\Omega$ such that
$u(0,x)=\varphi_\Omega(x)$ and $u(t,x)=0$
on $(0,\infty)\times\bd\Omega$,
it follows from the comparison principle that
\[
\exp(-\lambda_\Omega t)\,\varphi_\Omega(x)
\le\int_{\Omega} p_\dom(t,x,y)\varphi_\Omega(y)d\mu(y)
\le\normi{\varphi_\Omega}P_\dom(t,x)
\le\normi{\varphi_\Omega}\pi_\dom(t)
\]
in $(0,\infty)\times\Omega$.
Taking the supremum for $x\in\Omega$,
and then dividing by $0<\normi{\varphi_\Omega}<\infty$,
we obtain
\[
\exp(-\alpha t)
\le\exp(-\lambda_\Omega t)
\le\pi_\dom(t).
\]
Since $\alpha>\bots(\dom)$ is arbitrary,
we have \refeq{vDbot>1a}.

Let us show \refeq{vDbot>1b} to complete the proof of the lemma.
Let $C>1$ and $\beta=1/(C\normi{v_\dom})$.
Put
\[
w(t,x)=e^{-\beta t}(v_\dom(x)+(C-1)\normi{v_\dom}).
\]
Since $-\Delta v_\dom=1$ in $\dom$,
it follows that
\[
\begin{split}
\Big(\pder{}t-\Delta\Big)w
&=-\beta e^{-\beta t}(v_\dom+(C-1)\normi{v_\dom})-e^{-\beta t} \Delta v_\dom\\
&=e^{-\beta t}\Big(-\frac{v_\dom+(C-1)\normi{v_\dom}}{C\normi{v_\dom}}+1\Big)\\
&\ge e^{-\beta t}\Big(-\frac{\normi{v_\dom}+(C-1)\normi{v_\dom}}{C\normi{v_\dom}}+1\Big)
=0.
\end{split}
\]
Hence $w$ is a super solution to the heat equation.
By the comparison principle
\[
(C-1)\normi{v_\dom}P_\dom(t,x)
\le w(t,x)
=e^{-\beta t}(v_\dom(x)+(C-1)\normi{v_\dom})
\le Ce^{-\beta t}\normi{v_\dom}.
\]
Dividing the inequality by $0<\normi{v_\dom}<\infty$,
and taking the supremum for $x\in\dom$,
we obtain \refeq{vDbot>1b}.
\end{proof}

Next we prove the third inequality of \refeq{cwvDbot} under an additional assumption on $\bots(\dom)$.

\begin{lem}\label{lem:vDbot<C}
There exist $\Lambda_0>0$ and $\clabel{c:et}>0$ depending only on $K$ and $\di$ such that
if either $\bots(\dom)>\Lambda_0$ or $\normi{v_\dom}<1/\Lambda_0$, then
\begin{equation}\label{eq:et<C}
\bots(\dom)\,\normi{v_\dom}\le\refc{et}.
\end{equation}
\end{lem}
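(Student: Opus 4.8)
\textit{Plan.} If $\bots(\dom)=0$ the asserted inequality holds trivially, so assume $\bots(\dom)=:\lambda>0$. Then \reflem{vDbot>1} gives $\lambda\ge 1/\normi{v_\dom}$, so the hypothesis $\normi{v_\dom}<1/\Lambda_0$ forces $\lambda>\Lambda_0$; hence it suffices to treat the case $\lambda>\Lambda_0$. Put $r_0=\lambda^{-1/2}$. I will fix a scale $R_0$ depending only on $K$ and $\di$ so that the finite-scale tools of Section \ref{sec:pre} --- \refthm{VDR}, \refcor{PIR} and the Li--Yau Gaussian bound \refeq{GER} --- are in force below $R_0$, and then choose $\Lambda_0=\Lambda_0(K,\di)$ large enough that $\lambda>\Lambda_0$ makes $r_0$ as small as the estimates below require.

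\textit{Step 1: reduction to a decay estimate.} Write $\pi_\dom(t)=\sup_{x\in\dom}P_\dom(t,x)$. Chapman--Kolmogorov gives $P_\dom(s+t,x)=\int_\dom p_\dom(s,x,y)P_\dom(t,y)\,d\mu(y)\le\pi_\dom(t)\,P_\dom(s,x)$, so $P_\dom(kt_0,x)\le\pi_\dom(t_0)^k$ for every integer $k\ge1$ and every $t_0>0$, and therefore
\[
v_\dom(x)=\int_0^\infty P_\dom(s,x)\,ds\le t_0\sum_{k\ge0}\pi_\dom(t_0)^k.
\]
Thus it is enough to exhibit $t_0\le\const\,\lambda^{-1}$, with $\const$ depending only on $K,\di$, for which $\pi_\dom(t_0)\le\tfrac12$: indeed this yields $\normi{v_\dom}\le 2t_0\le 2\const\,\lambda^{-1}$, that is $\bots(\dom)\,\normi{v_\dom}\le\refc{et}$ with $\refc{et}=2\const$.

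\textit{Step 2: the decay estimate (the crux).} Fix $x\in\dom$ and a constant $\const_0=\const_0(K,\di)\ge1$ to be chosen, set $\Dom=\dom\cap B(x,\const_0 r_0)$, and let $t_0$ be a fixed multiple of $\lambda^{-1}$. Since $\Dom\subset\dom$ is bounded, $\bots(\Dom)\ge\lambda$ and $\mathbf 1_\Dom\in L^2(\Dom)$; and since $\tau_\Dom=\tau_\dom\wedge\tau_{B(x,\const_0 r_0)}$,
\[
P_\dom(t_0,x)\le P_\Dom(t_0,x)+\mathbb{P}_x\bigl[\tau_{B(x,\const_0 r_0)}\le t_0\bigr].
\]
For the second term the Gaussian upper bound \refeq{GER} (applicable because $\const_0 r_0<R_0$) gives the standard exit estimate $\mathbb{P}_x[\tau_{B(x,\const_0 r_0)}\le t_0]\le\const\exp(-\const^{-1}\const_0^2 r_0^2/t_0)$, which is $\le\tfrac14$ once $\const_0$ is large, since $t_0/r_0^2$ is a fixed constant. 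For the first term, Chapman--Kolmogorov and Cauchy--Schwarz give
\[
P_\Dom(t_0,x)=\int_\Dom p_\Dom(\tfrac{t_0}{2},x,z)\,P_\Dom(\tfrac{t_0}{2},z)\,d\mu(z)\le\bigl\|p_\Dom(\tfrac{t_0}{2},x,\cdot)\bigr\|_{L^2(\Dom)}\,\bigl\|P_\Dom(\tfrac{t_0}{2},\cdot)\bigr\|_{L^2(\Dom)},
\]
where $\|p_\Dom(\tfrac{t_0}{2},x,\cdot)\|_{L^2(\Dom)}^2=p_\Dom(t_0,x,x)\le p_M(t_0,x,x)\le\const\,V(x,\sqrt{t_0})^{-1}$ by \refeq{GER}; $\|P_\Dom(\tfrac{t_0}{2},\cdot)\|_{L^2(\Dom)}=\|e^{(t_0/2)\Delta_\Dom}\mathbf 1_\Dom\|_{L^2(\Dom)}\le e^{-\lambda t_0/2}\,\mu(\Dom)^{1/2}$ by the spectral bound $\bots(\Dom)\ge\lambda$; and $\mu(\Dom)\le\mu(B(x,\const_0 r_0))\le\const\,V(x,\sqrt{t_0})$ by \refthm{VDR}, since $\const_0 r_0$ and $\sqrt{t_0}$ are comparable and both below $R_0$. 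Combining, $P_\Dom(t_0,x)\le\const\,e^{-\lambda t_0/2}\le\tfrac14$ once $t_0$ is a suitable fixed multiple of $\lambda^{-1}$. Adding the two bounds and taking the supremum over $x\in\dom$ gives $\pi_\dom(t_0)\le\tfrac12$, which completes the proof by Step 1.

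\textit{Main obstacle.} The decay estimate of Step 2 is where I expect the real difficulty. Unlike the Euclidean case \cite{MR3420485}, which needs no lower bound on $\bots(\dom)$, here the doubling, Poincar\'e and Gaussian estimates hold only \emph{at finite scale}, so the localization to $\Dom=\dom\cap B(x,\const_0 r_0)$ together with the ultracontractivity and exit-time estimates can be carried out only when the intrinsic length scale $\lambda^{-1/2}$ lies well below $R_0$ --- which is exactly what $\bots(\dom)>\Lambda_0$ (equivalently $\normi{v_\dom}<1/\Lambda_0$) guarantees. It remains only to fix the interdependent constants $\const_0$, the doubling constant, and the decay time $t_0$ in an admissible order, which is routine.
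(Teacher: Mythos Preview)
Your proof is correct. Both arguments share Step~1 (iterate $\pi_\dom(t_0)\le\tfrac12$ via the semigroup property to get $\normi{v_\dom}\le 2t_0$); they differ in how the decay $\pi_\dom(t_0)\le\tfrac12$ is established. The paper works globally on $\dom$: from the inequality $p_\dom(t,x,y)\le e^{-\lambda t/2}\sqrt{p_\dom(t/2,x,x)\,p_\dom(t/2,y,y)}$ (\cite[Exercise~10.29]{MR2569498}) it dominates the on-diagonal factors by $p_M$, reconstructs the Gaussian profile to get $p_\dom(t,x,y)\le Ce^{-\lambda t/4}p_M(C't,x,y)$, and integrates in $y$ to obtain $\pi_\dom(t)\le Ce^{-\lambda t/4}$ for all $0<t<R_0^2$ directly. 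You instead localize to $\Dom=\dom\cap B(x,\const_0\lambda^{-1/2})$ and split $P_\dom(t_0,x)$ into a ball-exit probability (controlled by the Gaussian tail) and $P_\Dom(t_0,x)$ (controlled by Cauchy--Schwarz, the on-diagonal bound $p_\Dom(t_0,x,x)\le C/V(x,\sqrt{t_0})$, and the $L^2$-contraction $\|e^{(t_0/2)\Delta_\Dom}\|\le e^{-\lambda t_0/2}$). Your route trades the off-diagonal Gaussian reconstruction for a localization plus an exit-time estimate; the paper's is more direct once the cited heat-kernel inequality is in hand. In both cases the finite-scale restriction on \refeq{GER} is precisely what forces $\lambda>\Lambda_0$. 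Incidentally, your choice $t_0\approx\lambda^{-1}$ in Step~1 is the natural one: the paper fixes $T=R_0^2/2$ instead, and its final chain $\normi{v_\dom}\le 2T\le 2\Lambda_0 T/\lambda$ has a slip in the second inequality---one should take the time proportional to $\lambda^{-1}$ from the outset, as you do.
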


\begin{proof}
In view of \reflem{vDbot>1}, we see that $\normi{v_\dom}<1/\Lambda_0$ implies $\bots(\dom)>\Lambda_0$.
So, it suffices to show \refeq{et<C} under the assumption $\bots(\dom)>\Lambda_0$ with
$\Lambda_0$ to be determined later.

For simplicity we write $\lambda_\dom$ for $\bots(\dom)$, albeit $\bots(\dom)$
need not be an eigenvalue.
Let $0<R_0<\infty$.
By symmetry, the Gaussian estimate \refeq{GER} implies
\begin{equation}\label{eq:GER2}
\begin{split}
\frac{1}{\const V(x,\sqrt t)^{1/2}V(y,\sqrt t)^{1/2}}&\exp\Bigl(-\frac{\const d(x,y)^2}{t}\Bigr)
\le p_M(t,x,y)\\
&\le \frac{\const}{V(x,\sqrt t)^{1/2}V(y,\sqrt t)^{1/2}}\exp\Bigl(-\frac{d(x,y)^2}{\const t}\Bigr)
\end{split}
\end{equation}
with the same $\const$;
and conversely,  \refeq{GER2} implies \refeq{GER}
with different $\const$ depending only on $\sqrt K\, R_0$ and $\di$
by volume doubling.
Let $0<t<R_0^2$.
By \cite[Exercise 10.29]{MR2569498} we have
\[
\begin{split}
p_\dom(t,x,y)
&\le p_\dom(t,x,y)^{1/2}p_M(t,x,y)^{1/2}\\
&\le\Bigl(e^{-\lambda_\dom t/2}\sqrt{p_\dom(t/2,x,x)p_\dom(t/2,y,y)}\Bigr)^{1/2}
    p_M(t,x,y)^{1/2}\\
&\le e^{-\lambda_\dom t/4}p_M(t/2,x,x)^{1/4}p_M(t/2,y,y)^{1/4}p_M(t,x,y)^{1/2},
\end{split}
\]
so that the upper estimates of \refeq{GER} and \refeq{GER2}, together with volume doubling,
show that $p_\dom(t,x,y)$ is bounded by
\[
\begin{split}
e^{-\lambda_\dom t/4}\Big\{\frac\const{V(x,\sqrt{t/2})}\Big\}^{1/4}
&  \cdot\Big\{\frac\const{V(y,\sqrt{t/2})}\Big\}^{1/4}\cdot
  \Bigl\{\frac\const{V(x,\sqrt{t})^{1/2}V(y,\sqrt{t})^{1/2}}
\exp\Bigl(-\frac{d(x,y)^2}{\const t}\Bigr)\Bigr\}^{1/2}\\
&\le e^{-\lambda_\dom t/4} \frac{\const\const'}{V(x,\sqrt{t})^{1/2}V(y,\sqrt{t})^{1/2}}
     \exp\Bigl(-\frac{d(x,y)^2}{2\const t}\Bigr),
\end{split}
\]
where  $\const'$ takes care of the various volume doubling factors.
By the lower estimate of \refeq{GER2} with $2\const^2t$ in place of $t$ and volume doubling,
we find $\clabel{c:PDom}\ge1$ depending only on $\sqrt K\,R_0$ and $\di$
such that
\[
p_\dom(t,x,y)\le \refc{PDom}e^{-\lambda_\dom t/4} p_M(2\const^2t,x,y).
\]
Integrating the inequality with respect to $y\in\dom$, we obtain
\[
P_\dom(t,x)=
\int_\dom p_\dom(t,x,y)d\mu(y)
\le\refc{PDom} e^{-\lambda_\dom t/4} \int_\dom p_M(2\const^2t,x,y)d\mu(y)
\le\refc{PDom} e^{-\lambda_\dom t/4}.
\]
Taking the supremum over $x\in\dom$, we obtain
\begin{equation}\label{eq:piD}
\pi_\dom(t)\le \refc{PDom}\exp\Bigl(-\frac{\lambda_\dom t}4\Bigr)
\qtext{for }0<t<R_0^2.
\end{equation}

Let $T=R_0^2/2$.
We claim that \refeq{et<C} holds with $\refc{et}=8\log(2\refc{PDom})$,
and with $\Lambda_0=4T\inv\log(2\refc{PDom})$ or
\begin{equation}\label{eq:lam0}
\refc{PDom}\exp\Bigl(-\frac{\Lambda_0 T}4\Bigr)=\frac12.
\end{equation}
Suppose $\lambda_\dom>\Lambda_0$.
Then \refeq{piD} with $t=T$ yields $\pi_\dom(T)\le1/2$.
Solving the initial value problem from time $T$,
we see that
\[
P_\dom(t,x)\le \pi_\dom(T)\cdot P_\dom(t-T,x)\le \frac12 \qtext{for }t\ge T.
\]
Take the supremum for $x\in\dom$.
We find
\[
\pi_\dom(t)\le \frac12 \qtext{for }t\ge T.
\]
Repeating the same argument, we obtain
\[
\pi_\dom(t)\le \frac1{2^k} \qtext{for $kT \le t< (k+1)T$ with $k=0,1,2,\dots$.}
\]
Hence
\[
\begin{split}
v_\dom(x)
&=\int_0^\infty P_\dom(t,x)dt
=\sum_{k=0}^\infty \int_{kT}^{(k+1)T} P_\dom(t,x)dt\\
&\le\sum_{k=0}^\infty \int_{kT}^{(k+1)T} \pi_\dom(t)dt
\le T \sum_{k=0}^\infty\frac1{2^k}=2T
\le\dfrac{2\Lambda_0 T}{\lambda_\dom}
=\dfrac{8\log(2\refc{PDom})}{\lambda_\dom}
\end{split}
\]
by \refeq{lam0}.
Taking the supremum for $x\in\dom$, we obtain
\(
\lambda_\dom\normi{v_\dom}
\le 8\log(2\refc{PDom}),
\)
as required.
\end{proof}

\begin{rem}\label{rem:eigen.torsion}
If the Gaussian estimate \refeq{GER}
holds uniformly for all $0<t<\infty$, then
there exists $\const>0$ such that
$\bots(\dom)\,\normi{v_\dom}\le\const$ for all $\dom\subset M$.
This is the case when $K=0$.
See \cite{MR3682197}.
\end{rem}

\section{Capacitary width and harmonic measure}

By $\hm[x]{E}{\dom}$ we denote
the harmonic measure of $E$ in $\dom$ evaluated at $x$.
In this section we give an estimate for harmonic measure in terms of capacitary width.
This will be crucial for the proof of \refthm{cwcomp}.

\begin{thm}[cf. {\cite[Theorem 12.7]{MR3420485}}]\label{thm:hmebycw}
Let $0<R_0<\infty$.
Let $\dom\subset M$ be an open set with $\cw(\dom)<R_0$.
If $x\in \dom$ and $R>0$, then
\[
\hm[x]{\dom\cap\bd B(x,R)}{\dom\cap B(x,R)}
\le\exp\Big(2\refc{cwe}-\frac{\refc{cwe}R}{\cw(\dom)}\Big),
\]
where $\clabel{c:cwe}$
depends only on $\sqrt K\,R_0$, $\eta$ and $\di$.
\end{thm}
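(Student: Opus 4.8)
The plan is to imitate the Euclidean scheme of \cite[Theorem 12.7]{MR3420485}: first prove a \emph{one-shell estimate}, to the effect that on a ball whose radius is an admissible scale for $\cw(\dom)$ the harmonic measure of the outer sphere relative to $\dom$ is bounded away from $1$, and then iterate this over the roughly $R/\cw(\dom)$ concentric shells separating $x$ from $\bd B(x,R)$. The new feature in the manifold setting is that the two ingredients of the one-shell estimate — a lower bound for the relative capacity $\capain{B(y,2r)}(\cl B(y,r))$ and a lower bound for the Green function $G_{B(y,2r)}(y,\cdot)$ on $\cl B(y,r)$ — must be produced from the finite-scale tools of Section \ref{sec:pre} (volume doubling \refthm{VDR}, the Poincar\'e/Sobolev inequalities, the Li--Yau bound \refeq{GER}, the Harnack inequality \refcor{eHI}) in place of explicit formulas; this is where the dependence on $\sqrt K\,R_0$ and the restriction to scales $<R_0$ enter.

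\textbf{One-shell estimate.} Fix $y\in\dom$ and a scale $r$, $\cw(\dom)\le r<R_0$, at which the capacitary density inequality defining $\cw(\dom)$ holds (such $r$ exist arbitrarily close to $\cw(\dom)$ by the definition of the infimum). Put $E=\cl B(y,r)\sm\dom$, which is compact and, by that inequality at $y$, satisfies $\capain{B(y,2r)}(E)\ge\eta\,\capain{B(y,2r)}(\cl B(y,r))>0$; let $u$ be the equilibrium potential of $E$ in $B(y,2r)$, so $u(z)=\mathbb P_z[\text{$B_t$ hits $E$ before leaving $B(y,2r)$}]=\int_E G_{B(y,2r)}(z,w)\,d\nu(w)$ with $\nu(E)=\capain{B(y,2r)}(E)$. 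Since $E\subset\dom^c$ and $\cl B(y,r)\subset B(y,2r)$, hitting $E$ before leaving $B(y,2r)$ forces $B_t$ to leave $\dom$ before leaving $B(y,2r)$, so
\[
1-\hm[y]{\dom\cap\bd B(y,2r)}{\dom\cap B(y,2r)}\ge u(y)\ge\Bigl(\inf_{w\in\cl B(y,r)}G_{B(y,2r)}(y,w)\Bigr)\capain{B(y,2r)}(E).
\]
The plan is then to prove $G_{B(y,2r)}(y,w)\ge\const\,r^2/V(y,r)$ for $w\in\cl B(y,r)$ (from the minimum principle, the lower bound in \refeq{GER}, and \refcor{eHI} along a Harnack chain in $B(y,\tfrac32r)\sm B(y,\tfrac r2)$) and $\capain{B(y,2r)}(\cl B(y,r))\ge\const\,V(y,r)/r^2$ (from the Sobolev inequality of Section \ref{sec:pre} applied to admissible test functions for that capacity, whose $L^2$-norm is at least $V(y,r)^{1/2}$). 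These give $u(y)\ge\const\,\eta$, and since $u(y)\le1$ we obtain $\theta=\theta(\sqrt K\,R_0,\eta,\di)\in(0,1)$ with
\[
\hm[y]{\dom\cap\bd B(y,2r)}{\dom\cap B(y,2r)}\le\theta\qquad\text{for all $y\in\dom$ and all admissible $r<R_0$}
\]
(the auxiliary balls having radius $\le 2r<2R_0$, I would apply Section \ref{sec:pre} with $2R_0$ in place of $R_0$, which only changes constants through $\sqrt K\,R_0$).

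\textbf{Iteration.} Let $x\in\dom$, $R>0$. If $R\le2\cw(\dom)$ the asserted bound is $\ge1$, so assume $R>2\cw(\dom)$, pick an admissible $r$ with $\cw(\dom)\le r<\min\{R/2,R_0\}$, and set $m=\lfloor R/(2r)\rfloor\ge1$. Write $\tau_\dom$ for the exit time from $\dom$ and $T_k$ for the first hitting time of $\bd B(x,2kr)$. By continuity of $t\mapsto d(x,B_t)$ the path meets every sphere $\bd B(x,2kr)$, $1\le k\le m$, before reaching $\bd B(x,R)$, so $\hm[x]{\dom\cap\bd B(x,R)}{\dom\cap B(x,R)}\le\mathbb P_x[T_m<\tau_\dom]$. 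Here $\{T_1<\tau_\dom\}$ is, up to a null set, the event that $B_t$ exits $\dom\cap B(x,2r)$ through $\dom\cap\bd B(x,2r)$, so $\mathbb P_x[T_1<\tau_\dom]\le\theta$; and on $\{T_k<\tau_\dom\}$ the process is at some $y\in\dom$ with $d(x,y)=2kr$, while any $z$ with $d(x,z)=2(k+1)r$ has $d(y,z)\ge2r$, so reaching $\bd B(x,2(k+1)r)$ requires leaving $B(y,2r)$ first — the strong Markov property and the one-shell estimate at $y$ then bound the conditional probability by $\theta$. Thus $\mathbb P_x[T_m<\tau_\dom]\le\theta^m\le\theta^{\,R/(2r)-1}$, that is
\[
\hm[x]{\dom\cap\bd B(x,R)}{\dom\cap B(x,R)}\le\exp\Bigl(\log\tfrac{1}{\theta}-\tfrac{\log(1/\theta)}{2r}\,R\Bigr).
\]
The right-hand side increases in $r$, and admissible scales may be taken arbitrarily close to $\cw(\dom)$ (and $<R_0$, as $\cw(\dom)<R_0$); letting $r\downarrow\cw(\dom)$ gives $\exp\bigl(2\refc{cwe}-\refc{cwe}R/\cw(\dom)\bigr)$ with $\refc{cwe}=\tfrac12\log(1/\theta)$, which depends only on $\sqrt K\,R_0$, $\eta$ and $\di$.

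The iteration is routine, being essentially the Euclidean argument; I expect the main obstacle to be the one-shell estimate, and within it the two uniform bounds $\capain{B(y,2r)}(\cl B(y,r))\approx V(y,r)/r^2$ and $G_{B(y,2r)}(y,\cdot)\gtrsim r^2/V(y,r)$ on $\cl B(y,r)$, to be established for all $r<R_0$ with constants depending only on $\sqrt K\,R_0$ and $\di$, using only finite-scale volume doubling, the Poincar\'e/Sobolev inequality and the Li--Yau estimate, and no explicit kernel. Keeping the ``scale budget'' under control — never invoking these estimates on balls of radius comparable to $R_0$ — is precisely why the hypothesis reads $\cw(\dom)<R_0$ rather than merely $\cw(\dom)<\infty$.
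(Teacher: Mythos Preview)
Your proposal is correct and follows essentially the same two-step scheme as the paper: a one-shell estimate bounding the harmonic measure of the outer sphere away from $1$, followed by iteration over $\approx R/\cw(\dom)$ shells to produce the exponential decay with $\refc{cwe}=\tfrac12\log(1/\theta)$.

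The only noteworthy difference is in the packaging of the one-shell estimate. You compute the two ingredients $\inf_{\cl B(y,r)}G_{B(y,2r)}(y,\cdot)\gtrsim r^2/V(y,r)$ and $\capain{B(y,2r)}(\cl B(y,r))\gtrsim V(y,r)/r^2$ separately and multiply. The paper instead proves a general comparison (\reflem{capa=infred}(ii)/\reflem{1-capa=hm}(ii)) between $\inf_{\cl B(y,r)}\red[B(y,2r)]{E}{1}$ and the capacity ratio $\capain{B(y,2r)}(E)/\capain{B(y,2r)}(\cl B(y,r))$, using only the Harnack inequality to compare $G_{B(y,2r)}(z,\cdot)$ at two points of $\cl B(y,r)$ for $z$ on an intermediate sphere; the absolute scales $r^2/V(y,r)$ never appear because they cancel in the ratio. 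This is slightly cleaner and yields the bound on all of $\cl B(y,r)$ rather than just at the center (though your iteration only needs it at the center). For the iteration itself, the paper's induction via the maximum principle (\reflem{Omega<(1-Aeta)k}) and your strong-Markov argument are equivalent formulations of the same idea.
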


Let us begin by estimating the torsion function of a ball.

\begin{lem}\label{lem:vB}
Let $0<R_0<\infty$.
Then there exists a constant $\const>1$ depending only on
$\sqrt K\,R_0$ and $\di$ such that
\[
\const\inv r^2\le \normi{v_{B(x,r)}}\le \const r^2 \qtext{for } 0<r< R_0.
\]
\end{lem}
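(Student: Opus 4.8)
The plan is to prove the two one-sided bounds for $\normi{v_{B(x,r)}}$ separately: the lower bound is elementary, while the upper bound is the substantive part.

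\emph{Lower bound.} I would first bound $\bots(B(x,r))$ from above by a cut-off test function in the Rayleigh quotient \refeq{botsD}. Take a Lipschitz $f$ equal to $1$ on $B(x,r/2)$, vanishing outside $B(x,3r/4)$, with $|\nabla f|\le\const/r$; then $\normt{\nabla f}^2\le\const r^{-2}\mu(B(x,r))$ and $\normt{f}^2\ge\mu(B(x,r/2))$. Since $\mu(B(x,r))\le\const\,\mu(B(x,r/2))$ by the volume doubling property (\refthm{VDR}) with $r/2<R_0$, we get $\bots(B(x,r))\le\const/r^2$, where $\const$ depends only on $\sqrt K\,R_0$ and $\di$. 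As $\bots(B(x,r))>0$ by \refcor{lambdaBr}, \reflem{vDbot>1} gives $\normi{v_{B(x,r)}}\ge\bots(B(x,r))^{-1}\ge\const^{-1}r^2$.

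\emph{Upper bound.} Here I would revisit the proof of \reflem{vDbot<C}. That proof shows, for any domain $\dom$, that $\normi{v_\dom}\le2T$ whenever $\pi_\dom(T)\le\tfrac12$, and, via \refeq{piD}, that $\pi_\dom(T)\le\tfrac12$ holds with $T=\tfrac{4}{\bots(\dom)}\log\bigl(2\refc{PDom}\bigr)$ \emph{provided} $T$ lies below the scale at which the Gaussian estimate \refeq{GER} is invoked. Apply this with $\dom=B(x,r)$: by \refcor{lambdaBr} there is $\const_1=\const_1(\sqrt K\,R_0,\di)>0$ with $\bots(B(x,r))\ge\const_1 r^{-2}$ for $0<r<R_0$, so $T\le\tfrac{4r^2}{\const_1}\log\bigl(2\refc{PDom}\bigr)$. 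The only obstruction is the scale constraint, which may fail once $r$ is comparable to $R_0$. I would remove it by invoking \refeq{GER} at the enlarged scale $NR_0$ for an integer $N\ge1$: the admissible range then becomes a fixed multiple $c_*(NR_0)^2$ of $(NR_0)^2$ with $c_*$ absolute, and $\refc{PDom}$ gets replaced by a (larger) constant depending only on $\sqrt K\,NR_0$ and $\di$. By \refthm{VDR} and \refcor{PIR} the doubling and Poincar\'e constants at scale $NR_0$ are $\exp\bigl(O(\sqrt K\,NR_0)\bigr)$, and the constants in \refthm{PI.VD.PHI.G} depend on them polynomially, so the logarithm of the replacement constant grows at most linearly in $N$. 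Hence $N=N(\sqrt K\,R_0,\di)$ can be fixed so large that $T<c_*N^2R_0^2$ for \emph{every} $0<r<R_0$, and then \reflem{vDbot<C} yields $\normi{v_{B(x,r)}}\le2T\le\const r^2$, with $\const$ depending only on $\sqrt K\,R_0$ and $\di$. Taking the larger of the two constants proves the lemma.

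The step I expect to be the main obstacle is the one just described: for $r$ comparable to $R_0$ the eigenvalue $\bots(B(x,r))$ may be below the threshold $\Lambda_0$ of \reflem{vDbot<C}, so that lemma cannot be quoted verbatim for $B(x,r)$, and one must re-run its proof on a larger scale and verify that the loss in the Gaussian estimate constants as the scale grows is merely exponential, which is exactly where the quantitative, finite-scale form of the Li-Yau estimate (\refthm{PI.VD.PHI.G}) enters. For the lower bound, a test-function-free alternative is to compare $v_{B(x,r)}$ from below with the torsion function of a geodesic ball of radius $r$ in the simply connected space form of constant curvature $-K$, using the Laplacian comparison theorem.
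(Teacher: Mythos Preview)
Your argument is correct, but both halves differ from the paper's.

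For the \emph{upper bound}, the paper avoids the scale-enlargement trick entirely by a direct computation: split
\[
v_B(z)=\int_0^{r^2}P_B(t,z)\,dt+\int_{r^2}^\infty P_B(t,z)\,dt,
\]
bound the first integral by $r^2$, and for the second use the spectral bound $p_B(t,z,y)\le e^{-\lambda_B(t-r^2)}\sqrt{p_B(r^2,z,z)p_B(r^2,y,y)}$ together with the on-diagonal Gaussian upper estimate at the single time $t=r^2<R_0^2$ and $\lambda_B\ge\const r^{-2}$. This never leaves the scale $R_0$, so there is no need to track how the Li--Yau constants blow up with the scale. Your route works too, but it rests on the quantitative claim that the Gaussian-estimate constant in \refknownthm{PI.VD.PHI.G} depends only polynomially on the doubling and Poincar\'e constants; this is true (Moser iteration), yet it is an external input that the paper's argument does not need.

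For the \emph{lower bound}, your test-function estimate $\bots(B(x,r))\le\const r^{-2}$ combined with \reflem{vDbot>1} is clean and correct. The paper instead gives a direct proof from the near-diagonal lower bound on the Dirichlet heat kernel of a ball (equivalent to \refeq{GER}), integrating $p_B(t,y,z)\ge\const/V(x,\sqrt t)$ over $0<t<\ve r^2$ and $z\in B(x,\ve r)$; this yields the pointwise estimate $v_{B(x,r)}\ge\const r^2$ on $B(x,\ve r)$, which the paper records separately as \refeq{vB>r2} and uses later in the proof of \refthm{vd=cw2}. So while your approach is arguably more elementary here, the paper's choice is dictated by the need for that pointwise byproduct.
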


\begin{proof}
Let $0<r< R_0$.
Write $B=B(x,r)$ for simplicity.
We have $\bots(B)\ge \const r^{-2}$ by \refcor{lambdaBr}.
Since $B$ is bounded, the bottom of spectrum  is an eigenvalue.
So let us write $\lambda_B$ for $\bots(B)$.
Let $z\in B$.
In view of \cite[Exercise 10.29]{MR2569498}, \refeq{GER} and the volume doubling property, we have
\[
\begin{split}
v_B(z)&=\int_B G_B(z,y)d\mu(y)=\int_0^\infty dt\int_B p_B(t,z,y)d\mu(y)\\
&=\int_0^{r^2} dt\int_B p_B(t,z,y)d\mu(y)+\int_{r^2}^\infty dt\int_B p_B(t,z,y)d\mu(y)\\
&\le r^2+\int_{r^2}^\infty e^{-\lambda_B(t-r^2)}dt
   \int_B \sqrt{p_B(r^2,z,z)p_B(r^2,y,y)}\,d\mu(y)\\
&\le r^2+\frac1{\lambda_B} \int_B \frac{\const d\mu(y)}{\sqrt{V(z,r)V(y,r)}}
\le r^2+\const r^2,
\end{split}
\]
where $\const$ depends only on $\sqrt K\,R_0$ and $\di$.
Hence $\normi{v_B}\le\const r^2$.

The  opposite inequality is an immediate consequence of the combination of
\refcor{lambdaBr} and \reflem{vDbot>1}.
But for later purpose we give a direct proof based on a lower estimate
of the Dirichlet heat kernel of a ball: if $x\in M$, then
\[
p_B(t,y,z)\ge\frac\const{V(x,\sqrt t)}
\qtext{for $y,z\in\ve B$ and $0<t<\ve r^2$}
\]
valid for some $0<\ve<1$ and $\const>0$.
In fact, this lower estimate is equivalent to the Gaussian estimate \refeq{GER}.
See  e.g. \cite[(1.5)]{MR2998918}.
If $y\in\ve B$, then
\[
v_B(y)
=\int_B G_B(y,z)d\mu(z)
\ge \int_{0}^{\ve r^2} dt\int_{\ve B} p_B(t,y,z)d\mu(z)
\ge \frac{\ve r^2\const\mu(\ve B)}{V(x,\sqrt{\ve}r)}\ge \const r^2
\]
by volume doubling.
Thus $\normi{v_B}\ge\const r^2$.
\end{proof}

For later  use we record the above estimate: if $0<r<R_0$, then
\begin{equation}\label{eq:vB>r2}
v_{B(x,r)}\ge\refc{vB>r2} r^2 \qtext{on } B(x,\ve r),
\end{equation}
where $\ve$ and $\clabel{c:vB>r2}$ depends only on $\sqrt K\,R_0$ and $\di$.

\begin{rem}\label{rem:vB}
In case $K>0$, the inequality \refeq{vB>r2} does not necessarily hold for all $0<r<\infty$ uniformly.
Let $\HH^\di$ be the $\di$-dimensional hyperbolic space of constant curvature $-1$.
Then the torsion function for $B(a,r)$ is a radial function $f(\rho)$ of $\rho=d(x,a)$
satisfying
\[
-1=\Delta f(\rho)
=\frac1{(\sinh\rho)^{\di-1}}\frac{d}{d\rho}\Big\{(\sinh\rho)^{\di-1}\frac{df}{d\rho}\Big\}
\qtext{for $0<\rho<r$,}
\]
$f(r)=0$, $f'(0)=0$ and $f(0)=\normi{v_{B(a,r)}}$.
See \cite[pp.176-177]{MR990239} or \cite[(3.85)]{MR2569498}.
Hence
\[
\normi{v_{B(a,r)}}
=\int_0^r \int_0^\rho \Big(\frac{\sinh t}{\sinh \rho}\Big)^{\di-1}dt d\rho.
\]
Since the integrand is less than 1, we have
\(
\normi{v_{B(a,r)}}\le\frac12r^2
\)
for all $r>0$.
Observe that $t\le\sinh t$ for $t>0$ and $\sinh\rho \le \rho\cosh R_0$ for $0<\rho<R_0$.
Hence, if $0<r<R_0$, then
\[
\normi{v_{B(a,r)}}
\ge\int_0^r \int_0^\rho \Big(\frac{t}{\rho\cosh R_0}\Big)^{\di-1}dt d\rho
=\frac{r^2}{2\di(\cosh R_0)^{\di-1}},
\]
so that $\normi{v_{B(a,r)}}\approx r^2$.
This gives the estimate in \reflem{vB} with explicit bounds.

On the other hand, if $r>1$, then $\sinh \rho\ge\frac12(1-e^{-2})e^\rho$ for $1<\rho<r$, so that
\[
\begin{split}
\normi{v_{B(a,r)}}
&\le\int_0^1 \int_0^\rho dtd\rho
+\int_1^r \int_0^\rho \Big(\frac{\sinh t}{\sinh \rho}\Big)^{\di-1}dt d\rho
\le\frac12+\int_1^r \int_0^\rho \Big(\frac{e^t}{(1-e^{-2})e^\rho}\Big)^{\di-1}dt d\rho\\
&=\frac12+\frac1{\di-1}\int_1^r\frac{e^{(\di-1)\rho}-1}{((1-e^{-2})e^\rho)^{\di-1}}d\rho
\le \frac12+\frac{r-1}{(\di-1)(1-e^{-2})^{\di-1}}.
\end{split}
\]
Thus $\normi{v_{B(a,r)}}=O(r)$ as $r\to\infty$,
so \refeq{vB>r2} fails to hold uniformly for $0<r<\infty$.
This example illustrates that the assumption $0<r<R_0$
cannot be dropped in \reflem{vB}.
\end{rem}

Next we compare capacity and volume.
Observe that $\capa_\dom(E)$ coincides with the Green capacity
of $E$ with respect to $\dom$, i.e.,
\begin{equation}\label{eq:capsup}
\capa_\dom(E)
=\sup\Bigl\{\mass{\nu}: \supp\nu\subset E
\text{ and }\int_\dom G_\dom(x,y)d\nu(y)\le1\text{ on } \dom\Bigr\},
\end{equation}
where $\mass{\nu}$ stands for the total mass of the measure $\nu$.

\begin{lem}\label{lem:v<c}
Let $0<R_0<\infty$.
There exists a constant $\clabel{c:v<c}>0$ depending only on $\sqrt K\,R_0$ and $\di$
such that if $0<r< R_0$, then
\[
\frac{\mu(E)}{\mu(\clB(x,r))}
\le \refc{v<c}\frac{\capa_{B(x,2 r)}(E)}{\capa_{B(x,2 r)}(\clB(x,r))}
\]
for every Borel set $E\subset \clB(x,r)$.
\end{lem}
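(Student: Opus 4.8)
The plan is to estimate the two capacities appearing in the inequality separately: a lower bound for $\capa_{B(x,2r)}(E)$ obtained from the potential-theoretic description \refeq{capsup}, and an upper bound for $\capa_{B(x,2r)}(\clB(x,r))$ obtained from the variational definition of $\capa$ via an explicit cut-off. Since $\mu$ is a Radon measure and $\clB(x,r)$ is compact, $\mu$ is inner regular there, and capacity is monotone; so I would first prove the estimate for compact $E$, and then recover the general Borel case by taking the supremum over compact $K\subset E$ (using $\mu(E)=\sup_K\mu(K)$ and $\capa_{B(x,2r)}(E)\ge\capa_{B(x,2r)}(K)$).

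For the lower bound, fix a compact $E\subset\clB(x,r)$ and consider the measure $\nu=c\,\mu|_E$ with $c>0$ to be chosen. Since $G_{B(x,2r)}\ge0$ and $E\subset B(x,2r)$, for every $z\in B(x,2r)$,
\[
\int_{B(x,2r)}G_{B(x,2r)}(z,y)\,d\nu(y)
=c\int_E G_{B(x,2r)}(z,y)\,d\mu(y)
\le c\,v_{B(x,2r)}(z)
\le c\,\normi{v_{B(x,2r)}}.
\]
Because $r<R_0$ gives $2r<2R_0$, applying \reflem{vB} with $2R_0$ in place of $R_0$ yields $\normi{v_{B(x,2r)}}\le\const_1 r^2$ with $\const_1$ depending only on $\sqrt K\,R_0$ and $\di$. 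Choosing $c=1/(\const_1 r^2)$ makes the Green potential of $\nu$ at most $1$ on $B(x,2r)$, so $\nu$ (which is concentrated on the closed set $E$) is admissible in \refeq{capsup}, and hence
\[
\capa_{B(x,2r)}(E)\ge\mass{\nu}=\frac{\mu(E)}{\const_1 r^2}.
\]

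For the upper bound, I would fix once and for all a smooth $\chi\colon[0,\infty)\to[0,1]$ with $\chi\equiv1$ on $[0,1]$ and $\chi\equiv0$ on $[3/2,\infty)$, and take $\varphi(y)=\chi(d(x,y)/r)$: it equals $1$ on $\clB(x,r)$, is supported in $\clB(x,3r/2)\subset B(x,2r)$, and satisfies $|\nabla\varphi|\le\normi{\chi'}/r$ a.e.\ since $d(x,\cdot)$ is $1$-Lipschitz. As $d(x,\cdot)$ is smooth away from the ($\mu$-null) cut locus of $x$, a routine mollification turns $\varphi$ into a genuine element of $C_0^\infty(B(x,2r))$ with $\varphi\ge1$ on $\clB(x,r)$ and $|\nabla\varphi|\le\const_0/r$. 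Then, using $\clB(x,3r/2)\subset B(x,2r)=2B(x,r)$, volume doubling (\refthm{VDR}) and $B(x,r)\subset\clB(x,r)$,
\[
\capa_{B(x,2r)}(\clB(x,r))
\le\int_{B(x,2r)}|\nabla\varphi|^2\,d\mu
\le\frac{\const_0^2}{r^2}\,\mu(B(x,2r))
\le\frac{\const_2}{r^2}\,\mu(\clB(x,r)),
\]
with $\const_2$ depending only on $\sqrt K\,R_0$ and $\di$.

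Dividing the last two displays gives
\[
\frac{\capa_{B(x,2r)}(E)}{\capa_{B(x,2r)}(\clB(x,r))}
\ge\frac{1}{\const_1\const_2}\cdot\frac{\mu(E)}{\mu(\clB(x,r))},
\]
which is the assertion with $\refc{v<c}=\const_1\const_2$, and then one passes from compact $E$ to arbitrary Borel $E$ as explained. I expect the only genuine obstacle to be the construction of the cut-off: manufacturing an honest $C_0^\infty(B(x,2r))$ function that is $\ge1$ on $\clB(x,r)$ and has gradient $O(1/r)$, given that the distance function is merely Lipschitz (equivalently, checking that the $C_0^\infty$-definition of $\capa$ agrees with the one allowing Lipschitz competitors). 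The conceptual point — that \reflem{vB} converts the normalized volume measure $\mu|_E/(\const_1 r^2)$ into an admissible measure for the capacitary supremum \refeq{capsup} — is immediate.
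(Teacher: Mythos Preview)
Your proposal is correct and follows essentially the same route as the paper: a lower bound on $\capa_{B(x,2r)}(E)$ from the supremum characterization \refeq{capsup} using $\mu|_E$ scaled by $\normi{v_{B(x,2r)}}^{-1}$ together with \reflem{vB}, and an upper bound on $\capa_{B(x,2r)}(\clB(x,r))$ via a radial cutoff and volume doubling. The only cosmetic differences are that the paper takes the Lipschitz competitor $\varphi(y)=\min\{2-d(y,x)/r,1\}\in W_0^1(B(x,2r))$ directly (sidestepping your $C_0^\infty$ worry by density) and does not single out the reduction to compact $E$.
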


\begin{proof}
Let $0<r<R_0$.
\reflem{vB} yields
\[
\int_{E}G_{B(x,2 r)}(y,z)d\mu(z)
\le \int_{B(x,2 r)}G_{B(x,2 r)}(y,z)d\mu(z)
\le \normi{v_{B(x,2r)}}
\le \const r^2
\qtext{for all }y\in M,
\]
where $\const$ depends only on $\sqrt K\,R_0$ and $\di$.
Hence the characterization \refeq{capsup} of capacity gives
\begin{equation}\label{eq:CE>r-2mE}
\capa_{B(x,2 r)}(E)\ge\frac{\mu(E)}{\const r^2}.
\end{equation}
Let $\varphi(y)=\min\{2-{d(y,x)}/r,1\}$.
Observe that $\varphi\in W_0^1(B(x,2 r))$,
$|\nabla\varphi|\le 1/r$ and $\varphi=1$ on $\clB(x,r)$.
The definition of capacity and the volume doubling property yield
\[
\capa_{B(x,2 r)}(\clB(x,r))
\le \int_{B(x,2 r)}|\nabla\varphi|^2d\mu
\le \frac{\mu(\clB(x,2r))}{r^2}
\le \frac{\const\mu(\clB(x,r))}{r^2}.
\]
This, together with \refeq{CE>r-2mE} for $E=\clB(x,r)$, shows that
$\capa_{B(x,2 r)}(\clB(x,r))\approx r^{-2}\mu(\clB(x,r))$
with the constant of comparison depending only on $\sqrt K\,R_0$ and $\di$.
Dividing \refeq{CE>r-2mE} by $\capa_{B(x,2 r)}(\clB(x,r))$,
we obtain the lemma.
\end{proof}

Let us introduce regularized reduced functions,
which are closely related to capacity and harmonic measure.
See \cite[Sections 5.3-7]{MR1801253} for the Euclidean case.
Let $\dom$ be an open set.
For $E\subset \dom$ and a nonnegative function $u$ in $E$,
we define the reduced function $\Red[\dom]{E}{u}$ by
\[
\Red[\dom]{E}{u}(x)
=\inf\{v(x):
\text{ $v\ge0$ is superharmonic in $\dom$ and $v\ge u$ on $E$}\}
\qtext{for }x\in \dom.
\]
The lower semicontinuous regularization of $\Red[\dom]{E}{u}$ is called
the \emph{regularized reduced function} or \emph{balayage} and is denoted by
$\red[\dom]{E}{u}$.
It is known that $\red[\dom]{E}{u}$ is a nonnegative superharmonic function,
$\red[\dom]{E}{u}\le\Red[\dom]{E}{u}$ in $\dom$
with equality outside a polar set.
If $u$ is a nonnegative superharmonic function in $\dom$,
then $\red[\dom]{E}{u}\le u$ in $\dom$.
By the maximum principle $\red[\dom]{E}{u}$ is
nondecreasing with respect to $\dom$ and $E$.
If $u$ is the constant function $1$,
then $\red[\dom]{E}{1}(x)$ is the probability of
Brownian motion hitting $E$ before leaving $\dom$ when it starts at $x$.
In an almost verbatim way we can extend \cite[Lemma F]{MR3420485} to the present setting.
But, for  completeness, we shall provide a proof.

\begin{lem}\label{lem:capa=infred}
Let $0<r<R<R_0<\infty$.
\begin{enumerate}
\item
\(\ds
\inf_{\clB(x,r)} \red[B(x,R)]E1
\le\frac{\capain{B(x,R)}(E)}{\capain{B(x,R)}(\clB(x,r))}
\)
\quad  for $E\subset B(x,R)$.
\item
\(\ds
\frac{\capain{B(x,R)}(E)}{\capain{B(x,R)}(\clB(x,r))}
\le\const\inf_{\clB(x,r)} \red[B(x,R)]E1
\)
\quad
for $E\subset \clB(x,r)$
with $\const>1$ depending only on $\sqrt K\,R_0$, $r/R$ and $\di$.
\end{enumerate}
\end{lem}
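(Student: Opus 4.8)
The plan is to exploit the well-known duality between the regularized reduced function $\red[B(x,R)]{E}{1}$ and the equilibrium (capacitary) measure of $E$ relative to $B(x,R)$. Recall that the capacitary measure $\nu_E$ satisfies $\mass{\nu_E}=\capain{B(x,R)}(E)$ and $\int G_{B(x,R)}(\cdot,y)\,d\nu_E(y)=\red[B(x,R)]{E}{1}$ quasi-everywhere, with the potential bounded by $1$ everywhere; the analogue holds for $\nu_{\clB(x,r)}$. For part~(i), I would test the characterization~\refeq{capsup} of $\capain{B(x,R)}(E)$ with a rescaled version of $\nu_{\clB(x,r)}$: since $\red[B(x,R)]{E}{1}\ge m:=\inf_{\clB(x,r)}\red[B(x,R)]{E}{1}$ on $\clB(x,r)$, the potential $G_{B(x,R)}\nu_E$ dominates $m$ on the support of $\nu_{\clB(x,r)}$, whence by the domination (maximum) principle $G_{B(x,R)}\nu_E\ge m\,G_{B(x,R)}\nu_{\clB(x,r)}$ throughout $B(x,R)$. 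Integrating this inequality against $\nu_{\clB(x,r)}$ and using Fubini together with $G_{B(x,R)}\nu_E\le 1$ gives $m\,\|\nu_{\clB(x,r)}\|\le\|\nu_{\clB(x,r)}\|$... — more precisely one pairs $\nu_E$ against the potential of $\nu_{\clB(x,r)}$: $m\,\capain{B(x,R)}(\clB(x,r))=m\,\mass{\nu_{\clB(x,r)}}\le \int G_{B(x,R)}\nu_{\clB(x,r)}\,d\nu_E\le \mass{\nu_E}=\capain{B(x,R)}(E)$, which is exactly the claimed inequality.

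For part~(ii), where now $E\subset\clB(x,r)$, the inequality goes the other way and one must lose a constant. Here I would combine the volume comparison of \reflem{v<c} with an elliptic Harnack-type lower bound for $\red[B(x,R)]{E}{1}$. The idea: from \reflem{v<c}, $\capain{B(x,R)}(E)$ (or $\capain{B(x,2r)}(E)$, adjusted via monotonicity of capacity in the domain) is comparable to $r^{-2}\mu(E)$ up to the normalizing capacity $\capain{B(x,R)}(\clB(x,r))\approx r^{-2}\mu(\clB(x,r))$, so the left-hand ratio is comparable to $\mu(E)/\mu(\clB(x,r))$. On the other hand, $\red[B(x,R)]{E}{1}(y)=G_{B(x,R)}\nu_E(y)=\int_E G_{B(x,R)}(y,z)\,d\nu_E(z)$, and on $\clB(x,r)$ one can bound $G_{B(x,R)}(y,z)$ from below by $\const\,v_{B(x,R)}(y)\,/\,\normi{v_{B(x,R)}}\approx \const$ uniformly (using \reflem{vB}, \refeq{vB>r2}, the lower Gaussian estimate, and the elliptic Harnack inequality \refcor{eHI} to control $G_{B(x,R)}(y,\cdot)$ from below on $\clB(x,r)$ when $y\in\clB(x,r)$, since $r/R$ is bounded away from $1$). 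This yields $\red[B(x,R)]{E}{1}(y)\ge \const\,\mass{\nu_E}=\const\,\capain{B(x,R)}(E)$ on $\clB(x,r)$; dividing by $\capain{B(x,R)}(\clB(x,r))\approx r^{-2}\mu(\clB(x,r))$ and comparing with the volume ratio gives the result.

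The main obstacle I anticipate is part~(ii): specifically, producing the \emph{uniform} lower bound for the Green function $G_{B(x,R)}(y,z)$ when both $y,z$ range over $\clB(x,r)$, independent of the (bounded) ratio $r/R$. One has to be careful that the Green function does not degenerate as $z$ approaches $y$ (where it blows up, which only helps) nor is the issue the diagonal, but rather the off-diagonal values for $y,z$ near opposite sides of $\clB(x,r)$; this is handled by writing $G_{B(x,R)}(y,\cdot)$ as a positive harmonic function in $B(x,R)\sm\{y\}$ and invoking \refcor{eHI} on the ball $B(x,r')$ for a suitable $r<r'<R$, together with the lower Gaussian bound \refeq{GER} integrated in time to get a quantitative lower bound at a single reference point. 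A secondary technical point is matching the normalizing balls ($B(x,2r)$ in \reflem{v<c} versus $B(x,R)$ here), which is routine using monotonicity of relative capacity in the domain and the comparison $\capain{B(x,2r)}(\clB(x,r))\approx\capain{B(x,R)}(\clB(x,r))\approx r^{-2}\mu(\clB(x,r))$ from \reflem{v<c} and its proof. All constants produced this way depend only on $\sqrt{K}\,R_0$, $r/R$, and $\di$, as required.
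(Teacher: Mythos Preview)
Your argument for part (i) is correct and coincides with the paper's: both pair the two capacitary measures via Fubini, bounding $\int G_{B(x,R)}\nu_{\clB(x,r)}\,d\nu_E$ above by $\mass{\nu_E}$ (since $G_{B(x,R)}\nu_{\clB(x,r)}\le1$) and below by $m\,\mass{\nu_{\clB(x,r)}}$ (since $G_{B(x,R)}\nu_E\ge m$ on $\supp\nu_{\clB(x,r)}\subset\clB(x,r)$).

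Part (ii), however, has genuine gaps. First, the assertion that the capacity ratio is \emph{comparable} to $\mu(E)/\mu(\clB(x,r))$ is false: \reflem{v<c} gives only one inequality, and a set of zero measure (a sphere, say) can have positive capacity. Second, the claimed bound $G_{B(x,R)}(y,z)\ge\const$ with a dimensionless constant cannot hold; the Green function of a ball scales like $R^2/V(x,R)$, so your last step ``dividing by $\capain{B(x,R)}(\clB(x,r))\approx r^{-2}\mu(\clB(x,r))$'' leaves an uncontrolled factor $r^2/\mu(\clB(x,r))$. Third, your Harnack step is invalid as stated: for $y\in\clB(x,r)\subset B(x,r')$ the function $G_{B(x,R)}(y,\cdot)$ is \emph{not} harmonic in $B(x,r')$ (it has its pole there), so \refcor{eHI} does not apply on that ball. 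One can repair the scaling by carrying the factor $R^2/V(x,R)$ through and matching it against $\capain{B(x,R)}(\clB(x,r))\ge c\,\mu(\clB(x,r))/R^2$, but the Harnack issue for $y,z$ both ranging over $\clB(x,r)$ remains.

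The paper sidesteps all of this by evaluating on an intermediate sphere. Set $\rho=(r+R)/2$ and take $z\in\bd B(x,\rho)$. Now $G_{B(x,R)}(z,\cdot)$ \emph{is} harmonic throughout $B(x,\rho)\supset\clB(x,r)$, so \refcor{eHI} gives $G_{B(x,R)}(z,y)\approx G_{B(x,R)}(z,x)$ uniformly for $y\in\clB(x,r)$. Integrating against $\nu_E$ and $\nu_{\clB(x,r)}$ yields
\[
\red[B(x,R)]E1(z)\approx G_{B(x,R)}(z,x)\,\capain{B(x,R)}(E),
\qquad
\red[B(x,R)]{\clB(x,r)}1(z)\approx G_{B(x,R)}(z,x)\,\capain{B(x,R)}(\clB(x,r)),
\]
and since the second potential is $\approx1$ on $\bd B(x,\rho)$ (again by Harnack), the Green factor cancels and one gets $\red[B(x,R)]E1(z)\approx\capain{B(x,R)}(E)/\capain{B(x,R)}(\clB(x,r))$ there. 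Superharmonicity of $\red[B(x,R)]E1$ and the maximum principle then push the lower bound from $\bd B(x,\rho)$ into $\clB(x,r)$. The point is that evaluating \emph{outside} the support of both capacitary measures lets Harnack apply cleanly and makes the scale-dependent Green factor disappear automatically.
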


\begin{proof}
Let $\nu_E$ and $\nu_B$ be the capacitary measures of $E$
and $\clB(x,r)$, respectively.
Then $\nu_E$ is supported on $\cl E$,
$G_{B(x,R)}\nu_E=\red[B(x,R)]E1$ and $\mass{\nu_E}=\capain{B(x,R)}(E)$;
$\nu_B$ is supported on $\cl B(x,r)$,
$G_{B(x,R)}\nu_B=\red[B(x,R)]{\clB(x,r)}1$ and $\mass{\nu_B}=\capain{B(x,R)}(\clB(x,r))$.
In particular, $G_{B(x,R)}\nu_B \le 1$ in $B(x,R)$ and hence
\[
\begin{split}
\capain{B(x,R)}(E)
& \ge \int G_{B(x,R)}\nu_B d\nu_E
  = \int G_{B(x,R)}\nu_Ed\nu_B = \int \red[B(x,R)]E1 d\nu_B \\
& \ge \int \Bigl(\inf_{\clB(x,r)} \red[B(x,R)]E1\Bigr)d\nu_B
= \Bigl(\inf_{\clB(x,r)} \red[B(x,R)]E1 \Bigr) \capain{B(x,R)}(\clB(x,r)).
\end{split}
\]
Thus (i) follows.

Let $\rho=(r+R)/2$.
The elliptic Harnack inequality (\refcor{eHI}) implies
\begin{align*}
&G_{B(x,R)}(z,y)\approx G_{B(x,R)}(z,x)\qtext{for $z\in\bd B(x,\rho)$ and $y\in \clB(x,r)$,}\\
&\red[B(x,R)]{\clB(x,r)}1\approx 1 \qtext{on $\bd B(x,\rho)$,}
\end{align*}
where, and hereafter, the constants of comparison depend only on $\sqrt K\,R_0$, $r/R$ and $\di$.
Let $E\subset\clB(x,r)$.
Since $\supp\nu_E\subset\clB(x,r)$, we have for $z\in\bd B(x,\rho)$,
\begin{align*}
&\red[B(x,R)]E1(z)
=\int G_{B(x,R)}(z,y)d\nu_E(y)
\approx G_{B(x,R)}(z,x)\capain{B(x,R)}(E),\\
&\red[B(x,R)]{\clB(x,r)}1(z)=\int G_{B(x,R)}(z,y)d\nu_B(y)
\approx G_{B(x,R)}(z,x)\capain{B(x,R)}(\clB(x,r)),
\end{align*}
so that
\[
\frac{\capain{B(x,R)}(E)}{\capain{B(x,R)}(\clB(x,r))}
\approx \red[B(x,R)]E1(z).
\]
Since $z\in\bd B(x,\rho)$ is arbitrary,
the superharmonicity of $\red[B(x,R)]E1$ and the maximum principle yield (ii).
\end{proof}

We restate the above lemma in terms of harmonic measure.
We recall  $\hm[x]{E}{\dom}$ stands for
the harmonic measure of $E$ in $\dom$ evaluated at $x$.
We see that if $E$ is a compact subset of $B(x,R)$, then
\begin{equation}\label{eq:1-redBE1}
\hm{\bd B(x,R)}{B(x,R)\sm E}=1-\red[B(x,R)]E1
\qtext{on }B(x,R).
\end{equation}
Strictly speaking, the harmonic measure is extended by the right-hand side.
\reflem{capa=infred} reads as follows.

\begin{lem}\label{lem:1-capa=hm}
Let $0<r<R<R_0<\infty$.
\begin{enumerate}
\item
\(\ds
1- \frac{\capain{B(x,R)}(E)}{\capain{B(x,R)}(\clB(x,r))}
\le \sup_{\clB(x,r)} \hm{\bd B(x,R)}{B(x,R)\sm E}
\)
\quad for $E\subset B(x,R)$.

\item
\(\ds
\sup_{\clB(x,r)} \hm{\bd B(x,R)}{B(x,R)\sm E}
\le 1- \const\inv\frac{\capain{B(x,R)}(E)}{\capain{B(x,R)}(\clB(x,r))}
\)
\quad for $E\subset\clB(x,r)$
with $\const>1$ depending only on $\sqrt K\,R_0$, $r/R$ and $\di$.
In particular, if $0<r<R_0/2$, then
\[
\sup_{\clB(x,r)} \hm{\bd B(x,2r)}{B(x,2r)\sm E}
\le 1- \refc{1-capa=hm}\inv\frac{\capain{B(x,2r)}(E)}{\capain{B(x,2r)}(\clB(x,r))},
\]
where $\clabel{c:1-capa=hm}>1$ depends only on $\sqrt K\,R_0$ and $\di$.
\end{enumerate}
\end{lem}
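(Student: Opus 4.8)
The plan is to deduce this lemma directly from \reflem{capa=infred} by translating the regularized reduced function $\red[B(x,R)]E1$ into harmonic measure via the identity \refeq{1-redBE1}. First I would recall that for $E\subset B(x,R)$ the (extended) harmonic measure satisfies $\hm{\bd B(x,R)}{B(x,R)\sm E}=1-\red[B(x,R)]E1$ on $B(x,R)$. Since $\clB(x,r)\subset B(x,R)$ because $r<R$, taking the supremum over $\clB(x,r)$ on both sides and using that $\sup(1-g)=1-\inf g$ for a bounded function $g$ yields
\[
\sup_{\clB(x,r)} \hm{\bd B(x,R)}{B(x,R)\sm E}
= 1 - \inf_{\clB(x,r)} \red[B(x,R)]E1 .
\]
This reduces both assertions to the two estimates on $\inf_{\clB(x,r)}\red[B(x,R)]E1$ that have already been proved.

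For (i) I would invoke \reflem{capa=infred}(i), i.e. $\inf_{\clB(x,r)}\red[B(x,R)]E1\le\capain{B(x,R)}(E)/\capain{B(x,R)}(\clB(x,r))$ for any $E\subset B(x,R)$; subtracting from $1$ gives the claimed lower bound for the supremum of the harmonic measure. For (ii) I would use \reflem{capa=infred}(ii): for $E\subset\clB(x,r)$ one has $\capain{B(x,R)}(E)/\capain{B(x,R)}(\clB(x,r))\le\const\inf_{\clB(x,r)}\red[B(x,R)]E1$ with $\const$ depending only on $\sqrt K\,R_0$, $r/R$ and $\di$, hence $\inf_{\clB(x,r)}\red[B(x,R)]E1\ge\const\inv\capain{B(x,R)}(E)/\capain{B(x,R)}(\clB(x,r))$, and subtracting from $1$ gives the asserted upper bound. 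The closing ``in particular'' statement is the special case $R=2r$, which is admissible precisely when $2r<R_0$, i.e. $r<R_0/2$; since then the ratio $r/R=1/2$ is fixed, the constant from (ii) may be renamed $\refc{1-capa=hm}$ and depends only on $\sqrt K\,R_0$ and $\di$.

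There is no genuine analytic obstacle here; the content of the lemma is entirely contained in \reflem{capa=infred}, and the only points needing attention are bookkeeping. One is that \refeq{1-redBE1} is stated for compact $E$ and, for general $E$, should be read as the definition of the extended harmonic measure on the right-hand side; since the sets arising in the capacitary-width estimates (such as $E=\clB(x,r)\sm\dom$) are compact this causes no loss, but I would phrase the argument so that it stays valid in the generality of \reflem{capa=infred}. The other is the reversal of the inequalities when passing from $\red[B(x,R)]E1$ to the harmonic measure, caused by the sign flip in $1-(\cdot)$; this reversal is exactly what converts a capacity \emph{lower} bound for $E$ into a \emph{smallness} estimate for the escape probability $\sup_{\clB(x,r)}\hm{\bd B(x,2r)}{B(x,2r)\sm E}$, which is the form needed for \refthm{hmebycw} and ultimately \refthm{cwcomp}.
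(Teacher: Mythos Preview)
Your proposal is correct and matches the paper's approach exactly: the paper does not give a separate proof of this lemma but simply introduces it as a restatement of \reflem{capa=infred} via the identity \refeq{1-redBE1}, with the harmonic measure extended by that formula for general $E$. Your write-up is in fact more detailed than the paper's, which merely says ``\reflem{capa=infred} reads as follows,'' but the content is the same.
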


Applying \reflem{1-capa=hm} repeatedly, we obtain the following estimate of harmonic measure,
which is a preliminary version of \refthm{hmebycw}.

\begin{lem}\label{lem:Omega<(1-Aeta)k}
Let $0<R_0<\infty$.
Let $\dom\subset M$ be an open set with $\cw(\dom)<R_0$.
Suppose $x\in \dom$ and $R>0$.
If $k$ is a nonnegative integer such that $R-2k\cw(\dom)>0$, then
\[
\sup_{\dom\cap\clB(x,R-2k\cw(\dom))}\hm{\dom\cap\bd B(x,R)}{\dom\cap B(x,R)}
\le(1-\refc{1-capa=hm}\inv\eta)^k.
\]
\end{lem}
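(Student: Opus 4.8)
The plan is to prove the estimate by induction on $k$, peeling off one annulus of width $2\cw(\dom)$ at each step. The base case $k=0$ is trivial since harmonic measure is bounded by $1$. For the inductive step, suppose the claimed bound holds for $k-1$; I want to upgrade it to $k$. Set $w(\dom)=\cw(\dom)$ for brevity and fix $y\in\dom\cap\clB(x,R-2kw(\dom))$. The key observation is a two-ball decomposition of the harmonic measure: a Brownian motion started at $y$ must first exit the small ball $B(y,2w(\dom))$ before it can reach $\bd B(x,R)$ (this uses $R-2kw(\dom)+2w(\dom)=R-2(k-1)w(\dom)<R$, so $B(y,2w(\dom))\subset B(x,R)$ roughly, up to the intersection with $\dom$). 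By the strong Markov property,
\[
\hm[y]{\dom\cap\bd B(x,R)}{\dom\cap B(x,R)}
\le \sup_{z\in\dom\cap\bd B(y,2w(\dom))}\hm[z]{\dom\cap\bd B(x,R)}{\dom\cap B(x,R)}
\cdot \hm[y]{\dom\cap\bd B(y,2w(\dom))}{\dom\cap B(y,2w(\dom))}.
\]
The first factor is controlled by the inductive hypothesis: every such $z$ satisfies $d(z,x)\le d(z,y)+d(y,x)\le 2w(\dom)+R-2kw(\dom)=R-2(k-1)w(\dom)$, so $z\in\dom\cap\clB(x,R-2(k-1)w(\dom))$ and the first factor is at most $(1-\refc{1-capa=hm}\inv\eta)^{k-1}$.

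The second factor is where the definition of capacitary width and \reflem{1-capa=hm}(ii) enter. Since $\cw(\dom)<R_0$, by definition of $\cw$ with the value $r=w(\dom)$ (or any slightly larger value, using that the infimum is attained in the limit) we have
\[
\frac{\capain{B(y,2w(\dom))}(\clB(y,w(\dom))\sm\dom)}{\capain{B(y,2w(\dom))}(\clB(y,w(\dom)))}\ge\eta .
\]
Apply \reflem{1-capa=hm}(ii) with $r=w(\dom)$, $R=2w(\dom)$ (so $0<r<R_0/2$ requires taking $R_0$ there to be $2R_0$, or simply rescaling the hypothesis; this is a routine bookkeeping point) and $E=\clB(y,w(\dom))\sm\dom$. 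Monotonicity of harmonic measure in the domain gives
\[
\hm[y]{\dom\cap\bd B(y,2w(\dom))}{\dom\cap B(y,2w(\dom))}
\le \hm[y]{\bd B(y,2w(\dom))}{B(y,2w(\dom))\sm E}
\le 1-\refc{1-capa=hm}\inv\frac{\capain{B(y,2w(\dom))}(E)}{\capain{B(y,2w(\dom))}(\clB(y,w(\dom)))}
\le 1-\refc{1-capa=hm}\inv\eta .
\]
Multiplying the two factors yields the bound $(1-\refc{1-capa=hm}\inv\eta)^k$ at $y$; taking the supremum over $y$ completes the induction.

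The main obstacle, and the point that needs care rather than cleverness, is the geometric containment bookkeeping: one must check that $\dom\cap B(y,2w(\dom))\subset\dom\cap B(x,R)$ so that exiting the small ball genuinely precedes reaching $\bd B(x,R)$, and that the radius $2w(\dom)$ stays below the scale $R_0$ at which \reflem{1-capa=hm}, \refcor{eHI} and the volume-doubling/Poincar\'e machinery are available — replacing $R_0$ by a fixed multiple of itself throughout if necessary, at the cost of the constant $\refc{1-capa=hm}$ depending on $\sqrt K\,R_0$ and $\di$ only, as stated. A secondary subtlety is the use of the \emph{infimum} in the definition of $\cw(\dom)$: strictly one applies the capacity inequality for radii $r>w(\dom)$ arbitrarily close to $w(\dom)$ and passes to the limit, using continuity of the relevant capacities and harmonic measures in $r$; alternatively one absorbs an arbitrarily small loss into the constant. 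Neither point involves real difficulty, but both must be stated explicitly for the proof to be complete.
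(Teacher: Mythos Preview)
Your proposal is correct and follows essentially the same approach as the paper: induction on $k$, peeling off an annulus of width $2\cw(\dom)$ at each step via the maximum principle/strong Markov property, and invoking \reflem{1-capa=hm}(ii) together with the defining capacity ratio to gain the factor $1-\refc{1-capa=hm}\inv\eta$. The only cosmetic difference is that the paper handles the infimum subtlety up front by fixing $r>\cw(\dom)$ arbitrarily close to $\cw(\dom)$ and running the entire induction with that fixed $r$ (proving the bound on $\dom\cap\clB(x,R-2kr)$, then letting $r\downarrow\cw(\dom)$), rather than passing to a limit at the end as you suggest; this is exactly the alternative you describe in your final paragraph.
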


\begin{proof}
For simplicity let $\omega_0=\hm{\dom\cap\bd B(x,R)}{\dom\cap B(x,R)}$.
By definition we find $r>\cw(\dom)$ arbitrarily close to $\cw(\dom)$ such that
\[
\frac{\capain{B(y,2r)}(\clB(y,r)\sm \dom)}{\capain{B(y,2r)}(\clB(y,r))}\ge\eta
\qtext{for all }y\in \dom.
\]
Hence it suffices to show that $\omega_0\le(1-\refc{1-capa=hm}\inv\eta)^k$
in $\dom\cap\clB(x,R-2kr)$.
Let us prove this inequality  by induction on $k$.
The case $k=0$ holds trivially.
Let $k\ge1$ and suppose
$\omega_0\le(1-\refc{1-capa=hm}\inv\eta)^{k-1}$
in $\dom\cap\clB(x,R-2(k-1)r)$.
Take $y\in \dom\cap\bd B(x,R-2kr)$ and let $E=\clB(y,r)\sm \dom$.
Since $\dom\cap B(y,2r)\subset \dom\cap\clB(x,R-2(k-1)r)$, we have
\[
\begin{split}
\omega_0
&\le(1-\refc{1-capa=hm}\inv\eta)^{k-1}\hm{\dom\cap\bd B(y,2r)}{\dom\cap B(y,2r)}\\
&\le(1-\refc{1-capa=hm}\inv\eta)^{k-1}\hm{\bd B(y,2r)}{\dom\sm E}
\le (1-\refc{1-capa=hm}\inv\eta)^{k}
\end{split}
\]
in $\dom\cap B(y,2r)$.
Since $y\in \dom\cap\bd B(x,R-2kr)$ is arbitrary, we have
$\omega_0\le(1-\refc{1-capa=hm}\inv\eta)^{k}$ on $\dom\cap\bd B(x,R-2kr)$,
and hence in $\dom\cap \clB(x,R-2kr)$ by the maximum principle,
as required.
\end{proof}

This lemma and the definition of capacitary width yield

\begin{proof}[Proof of \refthm{hmebycw}]
Let $k$ be the integer such that
\(
2k\cw(\dom)<R\le2(k+1)\cw(\dom).
\)
\reflem{Omega<(1-Aeta)k} gives
\[
\begin{split}
\hm[x]{\dom\cap\bd B(x,R)}{\dom\cap B(x,R)}
&\le(1-\refc{1-capa=hm}\inv\eta)^k
=\exp\Big(-k\log\frac1{1-\refc{1-capa=hm}\inv\eta}\Big)\\
&\le\exp\Bigg(-\Big(\frac{R}{2\cw(\dom)}-1\Big)
\log\frac1{1-\refc{1-capa=hm}\inv\eta}\Bigg),
\end{split}
\]
which implies the required inequality
with
\[
\refc{cwe}=\frac12\log\frac1{1-\refc{1-capa=hm}\inv\eta}.
\qedhere
\]
\end{proof}

\section{Proofs of Theorems \ref{thm:cwcomp} and \ref{thm:cwvDbot}}

In this section we prove \refthm{cwcomp} and complete the proof of \refthm{cwvDbot}
by showing

\begin{thm}\label{thm:vd=cw2}
Let $0<R_0<\infty$.
If $\cw(\dom)<R_0$, then
\begin{equation}\label{eq:cw<G<cw}
\const\inv\cw(\dom)^2\le \normi{v_\dom}\le \const\cw(\dom)^2
\end{equation}
where $\const$ depends only on $\sqrt K\, R_0$, $\eta$ and $\di$.
\end{thm}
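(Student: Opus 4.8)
The plan is to establish the two inequalities of \refeq{cw<G<cw} separately. The upper bound will follow for every $\eta\in(0,1)$ directly from the harmonic measure estimate \refthm{hmebycw}; the lower bound I would first prove for one conveniently small value $\eta_0$ of the parameter and then transfer to arbitrary $\eta$ by means of \refthm{cwcomp}.

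For the upper bound $\normi{v_\dom}\le\const\cw(\dom)^2$, I would show that the survival probability $\pi_\dom$ decays geometrically on the time scale $\cw(\dom)^2$ and then integrate, exactly as in the proof of \reflem{vDbot<C}. Fix $x\in\dom$ and a radius $R>0$, and split the first exit of Brownian motion from $\dom$ according to whether it leaves $\dom\cap B(x,R)$ through $\bdy$ or through $\bd B(x,R)$. On the first event the exit from $\dom$ occurs no later than the exit from $B(x,R)$, so its contribution to $P_\dom(t,x)$ is at most $\mathbb{P}_x[\tau_{B(x,R)}>t]\le t\inv\normi{v_{B(x,R)}}$ by Chebyshev's inequality, since $v_{B(x,R)}(x)$ is the mean exit time of $B(x,R)$ from $x$; the probability of the second event is at most $\hm[x]{\dom\cap\bd B(x,R)}{\dom\cap B(x,R)}$. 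Hence, by \refthm{hmebycw},
\[
P_\dom(t,x)\le\frac{\normi{v_{B(x,R)}}}{t}+\exp\Big(2\refc{cwe}-\frac{\refc{cwe}R}{\cw(\dom)}\Big)
\qtext{for all }t>0.
\]
Choosing $R$ to be a fixed multiple of $\cw(\dom)$, depending only on $\refc{cwe}$, makes the exponential term at most $\tfrac14$; then \reflem{vB} gives $\normi{v_{B(x,R)}}\le\const R^2\le\const\cw(\dom)^2$, so that with $t=T$ a suitable fixed multiple of $\cw(\dom)^2$ we get $\pi_\dom(T)\le\tfrac12$. The semigroup inequality $P_\dom(t+T,x)\le\pi_\dom(T)P_\dom(t,x)$ then yields $\pi_\dom(kT)\le 2^{-k}$ for all $k$, whence $\normi{v_\dom}\le\int_0^\infty\pi_\dom(t)\,dt\le 2T\le\const\cw(\dom)^2$.

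For the lower bound I fix $\eta_0\in(0,1)$, to be chosen small depending only on $K$ and $\di$, and first prove $\normi{v_\dom}\ge\const\inv w_{\eta_0}(\dom)^2$. Let $r<w_{\eta_0}(\dom)$; by the definition of capacitary width there is $x\in\dom$ with $\capa_{B(x,2r)}(E)<\eta_0\,\capa_{B(x,2r)}(\clB(x,r))$, where $E=\clB(x,r)\sm\dom$. Put $U=\dom\cap B(x,r)$. Since $v_{B(x,r)}$ and $v_U$ both solve $-\Delta u=1$ in $U$, their difference is harmonic in $U$, with boundary values at most $\normi{v_{B(x,r)}}$ on $\bdy\cap B(x,r)$ and $0$ on $\bd B(x,r)$, so the maximum principle and monotonicity of the balayage give $v_{B(x,r)}-v_U\le\normi{v_{B(x,r)}}\red[B(x,2r)]E1$ on $B(x,r)$. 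Integrating over $B(x,r)$, writing $\red[B(x,2r)]E1=G_{B(x,2r)}\nu_E$ with $\nu_E$ the capacitary measure of $E$ to obtain $\int_{B(x,r)}\red[B(x,2r)]E1\,d\mu\le\normi{v_{B(x,2r)}}\capa_{B(x,2r)}(E)$, and then using \reflem{vB}, the comparison $\capa_{B(x,2r)}(\clB(x,r))\approx r^{-2}\mu(\clB(x,r))$ from the proof of \reflem{v<c}, and volume doubling, I get
\[
\int_U v_U\,d\mu\ge\int_{B(x,r)}v_{B(x,r)}\,d\mu-\const_1\eta_0\,r^2\mu(B(x,r))\ge(\const_0\inv-\const_1\eta_0)\,r^2\mu(B(x,r)),
\]
the first term being bounded below by $\refc{vB>r2}r^2\mu(B(x,\ve r))\ge\const_0\inv r^2\mu(B(x,r))$ via \refeq{vB>r2} and \refeq{VDa}. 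Choosing $\eta_0$ so that $\const_0\inv-\const_1\eta_0\ge\tfrac12\const_0\inv$, and using $\mu(U)\le\mu(B(x,r))$, we conclude $\normi{v_\dom}\ge\normi{v_U}\ge\mu(U)\inv\int_U v_U\,d\mu\ge\const\inv r^2$; letting $r\increase w_{\eta_0}(\dom)$ gives $\normi{v_\dom}\ge\const\inv w_{\eta_0}(\dom)^2$. Finally, \refthm{cwcomp} yields $w_\eta(\dom)\le\const\,w_{\eta_0}(\dom)$ for every $\eta\in(0,1)$ — trivially when $\eta\le\eta_0$, and by \refthm{cwcomp} when $\eta>\eta_0$ since $\cw(\dom)<R_0$ — which upgrades the estimate to $\normi{v_\dom}\ge\const\inv\cw(\dom)^2$.

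The decisive point, and the main obstacle, is the lower bound. The measure estimate \reflem{v<c} controls only $\mu(\clB(x,r)\sm\dom)$, the part of the complement of $\dom$ inside the smaller ball, and says nothing about $\bdy$ in the annulus $r<d(\cdot,x)<2r$; consequently the error $v_{B(x,r)}-v_U$ cannot be disposed of by a crude volume bound, but must be absorbed through the reduced-function/capacity estimate, and this absorption closes only when $\eta_0$ is small. That is exactly why \refthm{cwcomp} — which is proved in the same section from \refthm{hmebycw} and is independent of the present theorem — is needed in order to reach an arbitrary $\eta$. A secondary, routine matter is the bookkeeping of scales: the auxiliary lemmas are applied on balls whose radius is a fixed multiple of $\cw(\dom)$, so they must be invoked at an enlarged finite scale, which affects the constants only through the product of $\sqrt K$ with that scale.
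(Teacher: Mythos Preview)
Your argument is correct, but both halves take a different route from the paper's.

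For the upper bound the paper does not pass through the survival probability at all: it compares $v_\dom$ with $v_{B(x,2r)}$ directly, obtaining the pointwise absorption inequality
\[
v_\dom(x)\le v_{B(x,2r)}(x)+\normi{v_\dom}\bigl(1-\refc{1-capa=hm}\inv\eta\bigr),
\]
takes the supremum over $x$, and rearranges. This is a one--line argument once \reflem{1-capa=hm} and \reflem{vB} are in hand; your detour through $\pi_\dom$ is longer but has the advantage of producing \refthm{P<exp(A/w2)} as a by-product rather than as a corollary, and of avoiding the preliminary reduction to bounded $\dom$ that the paper makes. For the lower bound the paper reverses the logic: it \emph{defines} $R$ by $\normi{v_\dom}=\tfrac12\refc{vB>r2}R^2$, compares $v_{B(x,R)}-v_\dom$ \emph{pointwise} on $\clB(x,\ve R)$ (using \refeq{vB>r2} at the centre rather than in integrated form), and reads off from \reflem{1-capa=hm}(i) that the capacity ratio exceeds some $\eta'$ for \emph{every} $x$, whence $w_{\eta'}(\dom)\le R$. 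Your averaged argument with a small $\eta_0$ and the Green-potential representation of $\red[B(x,2r)]E1$ is an equally valid variant; both approaches land on an auxiliary parameter and then invoke \refthm{cwcomp} to reach the given $\eta$. One small slip: the constants $\const_0,\const_1$ in your lower bound come from \reflem{vB}, \reflem{v<c} and volume doubling at scale $2r<2R_0$, so $\eta_0$ depends on $\sqrt K\,R_0$ and $\di$, not on $K$ and $\di$ alone --- exactly the scale bookkeeping you flag at the end.
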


This theorem, together with \refeq{vDbot>1b} in \reflem{vDbot>1},
immediately yields the following estimate of the survival probability,
which plays a crucial role in the proof of \refthm{IU}.

\begin{thm}\label{thm:P<exp(A/w2)}
Let $0<R_0<\infty$.
There exist positive constants
$\clabel{c:ke1}$ and $\clabel{c:ke2}$
depending only on $\sqrt K\, R_0$, $\eta$ and $\di$
such that
\begin{equation}\label{eq:ke}
P_\dom(t,x)
\le\refc{ke1}\exp\Big(-\frac{\refc{ke2}t}{\cw(\dom)^2}\Big)
\qtext{for all $t>0$ and $x\in\dom$,}
\end{equation}
whenever $\cw(\dom)<R_0$.
\end{thm}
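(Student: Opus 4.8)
The plan is to deduce \refeq{ke} immediately from the upper torsion-function estimate of \refthm{vd=cw2} combined with the exponential decay bound \refeq{vDbot>1b} for the survival probability from \reflem{vDbot>1}; no idea beyond those two results is required.

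First I would note that, since $\cw(\dom)<R_0$, \refthm{vd=cw2} furnishes a constant $\const\ge1$ depending only on $\sqrt K\,R_0$, $\eta$ and $\di$ with $\normi{v_\dom}\le\const\cw(\dom)^2$; in particular $\normi{v_\dom}<\infty$, so the reduction to the case $\normi{v_\dom}<\infty$ made at the outset of \reflem{vDbot>1} is legitimate here and \refeq{vDbot>1b} is available for $\dom$. Choosing $C=2$ in \refeq{vDbot>1b} gives
\[
\pi_\dom(t)\le 2\exp\Bigl(-\frac{t}{2\normi{v_\dom}}\Bigr)\qtext{for all }t>0,
\]
and replacing $\normi{v_\dom}$ by the larger quantity $\const\cw(\dom)^2$ in the denominator of the exponent, which can only increase the right-hand side, yields
\[
\pi_\dom(t)\le 2\exp\Bigl(-\frac{t}{2\const\cw(\dom)^2}\Bigr)\qtext{for all }t>0.
\]
Since $P_\dom(t,x)\le\pi_\dom(t)$ for every $x\in\dom$ by the definition $\pi_\dom(t)=\sup_{x\in\dom}P_\dom(t,x)$, this is exactly \refeq{ke} with $\refc{ke1}=2$ and $\refc{ke2}=1/(2\const)$, and the claimed dependence of the constants on $\sqrt K\,R_0$, $\eta$ and $\di$ is inherited directly from that of $\const$ in \refthm{vd=cw2}.

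I do not expect any genuine obstacle here: all the substance is packaged in \refthm{vd=cw2} and \reflem{vDbot>1}. The two points worth a moment's attention are that \refeq{vDbot>1b} was stated only after passing to the case $\normi{v_\dom}<\infty$, so the finiteness supplied by \refthm{vd=cw2} must be invoked before quoting it, and that any fixed $C>1$ would serve equally well — the choice $C=2$ is made only to pin down the explicit value of $\refc{ke1}$.
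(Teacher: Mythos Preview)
Your proof is correct and is exactly the argument the paper intends: the paper states that \refthm{P<exp(A/w2)} follows immediately from \refthm{vd=cw2} together with \refeq{vDbot>1b} in \reflem{vDbot>1}, and you have simply written out that one-line deduction in full.
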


Let us begin with a uniform estimate of the capacity of balls.

\begin{lem}\label{lem:capaB}
Let $0<R_0<\infty$.
For $0<t\le1$,
define
\[
\kappa(t)
=\inf\left\{\frac{\capain{B(x,2R)}(\clB(x,tR))}{\capain{B(x,2R)}(\clB(x,R))}: x\in M,\ 0<R<R_0\right\}.
\]
Then $\lim_{t\to1}\kappa(t)=1$.
\end{lem}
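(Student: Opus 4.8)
The plan is to note first that $\kappa(t)\le1$ holds trivially, since $\clB(x,tR)\subseteq\clB(x,R)$ and relative capacity is monotone under inclusion; the whole task is therefore the uniform lower bound $\kappa(t)\ge1-\const(1-t)$ for $t$ close to $1$, with $\const$ depending only on $\sqrt K\,R_0$ and $\di$. I would fix $x\in M$ and $0<R<R_0$, write $D=B(x,2R)$ (a relatively compact open set with nonempty boundary, as $M$ is complete and non-compact, hence Greenian), and let $\nu_R,\nu_{tR}$ be the capacitary measures of $\clB(x,R),\clB(x,tR)$ relative to $D$, so that $G_D\nu_R=\red[D]{\clB(x,R)}1$, $G_D\nu_{tR}=\red[D]{\clB(x,tR)}1$ and $\mass{\nu_R},\mass{\nu_{tR}}$ are the corresponding capacities. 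The crucial reduction is the mutual-energy identity
\[
\capain{D}(\clB(x,tR))=\mass{\nu_{tR}}=\int G_D\nu_R\,d\nu_{tR}=\int G_D\nu_{tR}\,d\nu_R=\int_{\bd B(x,R)}\red[D]{\clB(x,tR)}1\,d\nu_R ,
\]
which holds because $\red[D]{\clB(x,R)}1=1$ q.e.\ on $\clB(x,R)\supseteq\supp\nu_{tR}$ and capacitary measures charge no polar set, together with the symmetry of $G_D$; the last equality uses that $\nu_R$ is carried by $\bd B(x,R)$, which follows because $\red[D]{\clB(x,R)}1$ is superharmonic, $\le1$, and $=1$ a.e.\ on the open ball $B(x,R)$, hence $\equiv1$ there. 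Dividing by $\mass{\nu_R}$ gives
\[
\frac{\capain{D}(\clB(x,tR))}{\capain{D}(\clB(x,R))}\ge\inf_{\bd B(x,R)}\red[D]{\clB(x,tR)}1 ,
\]
so it remains to bound $\red[D]{\clB(x,tR)}1$ from below on the sphere $\bd B(x,R)\subseteq\{d(\cdot,x)=R\}$.

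For that I would use a radial barrier built from the model function $S$ of curvature $-K$, i.e.\ $S(\rho)=\rho$ if $K=0$ and $S(\rho)=\sinh(\rho\sqrt{K/(\di-1)})$ if $K>0$; $S$ is smooth and increasing, and the Laplacian comparison theorem --- which needs only $\Ric\ge-K$ --- gives $\Delta d(\cdot,x)\le(\di-1)S'(d)/S(d)$ on $M\sm\{x\}$, in the barrier sense, where $d=d(\cdot,x)$. With $h$ an antiderivative of $S^{-(\di-1)}$ and
\[
\psi(\rho)=\frac{h(\rho)-h(tR)}{h(2R)-h(tR)}\qquad(tR\le\rho\le2R)
\]
--- increasing, with $\psi(tR)=0$, $\psi(2R)=1$, and $\psi''=-(\di-1)(S'/S)\psi'$ --- one has $\Delta(\psi(d(\cdot,x)))=\psi''(d)+\psi'(d)\Delta d(\cdot,x)\le0$ on $D\sm\clB(x,tR)$, so $1-\psi(d(\cdot,x))$ is subharmonic there with boundary values $1$ on $\bd B(x,tR)$ and $0$ on $\bd D$. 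Since $\red[D]{\clB(x,tR)}1$ is harmonic on $D\sm\clB(x,tR)$, equals $1$ q.e.\ on $\bd B(x,tR)$, and is a Green potential vanishing at $\bd D$, the maximum principle yields $\red[D]{\clB(x,tR)}1\ge1-\psi(d(\cdot,x))$ there, hence $\ge1-\psi(R)$ on $\bd B(x,R)$. Finally, as $S^{-(\di-1)}$ is decreasing, $\psi(R)\le(1-t)(S(2R)/S(tR))^{\di-1}$, and $(R,t)\mapsto S(2R)/S(tR)$ extends continuously to the compact set $[0,R_0]\times[\tfrac12,1]$ (with value $2/t$ at $R=0$), so $\psi(R)\le\const(1-t)$ there with $\const$ depending only on $\sqrt K\,R_0$ and $\di$. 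Combining, $\kappa(t)\ge1-\const(1-t)$ for $t$ near $1$, which with $\kappa(t)\le1$ gives $\lim_{t\to1}\kappa(t)=1$.

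The step I expect to be the main obstacle is obtaining a lower bound on the capacity ratio that genuinely tends to $1$ rather than to some fixed positive constant: estimating $\red[D]{\clB(x,tR)}1$ on $\bd B(x,R)$ by a fixed finite chain of Harnack/capacity comparisons (via volume doubling and \refcor{eHI}) only yields $\kappa(t)\ge\const\inv>0$, which is too weak. It is the radial barrier that converts the shrinking width $(1-t)R$ of the annulus $\{tR<d(\cdot,x)<R\}$ into the smallness of $1-\kappa(t)$, its profile $S^{-(\di-1)}$ contributing only $O(1-t)$ over $[tR,R]$ while staying bounded below over $[R,2R]$; this is also the only point in the argument where the curvature lower bound, as opposed to merely volume doubling and the Poincar\'e inequality, is used essentially. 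A routine side issue is the boundary regularity needed to apply the maximum principle, which is immediate because a closed ball satisfies an interior ball condition at each of its boundary points.
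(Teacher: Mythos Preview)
Your proof is correct and takes a genuinely different route from the paper. The paper's argument works entirely within the volume doubling/Poincar\'e framework: it invokes Ancona's strong-barrier construction to produce a superharmonic function $s$ on the annulus $B(x,2R)\sm\clB(x,tR)$ with $s\approx\dist(\cdot,\bd B(x,tR))^\alpha$, the exponent $\alpha>0$ coming from the iterated harmonic-measure estimate of \reflem{1-capa=hm}; this forces the capacitary potential $u$ of $\clB(x,tR)$ to satisfy $u\ge1-\const(1-t)^\alpha$ on $B(x,R)$, and then $u/(1-\const(1-t)^\alpha)$ is used as a competitor in the variational definition of $\capain{B(x,2R)}(\clB(x,R))$, yielding $\kappa(t)\ge(1-\const(1-t)^\alpha)^2$. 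Your argument instead reduces, via the mutual-energy identity (essentially \reflem{capa=infred}(i)), to a lower bound for $\red[D]{\clB(x,tR)}1$ on $\bd B(x,R)$, and supplies that bound by a single explicit radial barrier built from the Laplacian comparison theorem. What you gain is simplicity and a sharper rate, $\kappa(t)\ge1-\const(1-t)$; what the paper's approach gains is portability, since it uses only (VD)+(PI) and hence carries over to the more general settings (unimodular Lie groups, homogeneous spaces) alluded to at the end of Section~\ref{sec:pre}, where a Laplacian comparison theorem may not be available. Your own remark that the barrier step is ``the only point in the argument where the curvature lower bound, as opposed to merely volume doubling and the Poincar\'e inequality, is used essentially'' identifies precisely this trade-off.
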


\begin{proof}
Without loss of generality we may assume that $\frac12<t\le1$.
Let $\Dom=B(x,2R)\sm \cl B(x,tR)$ and let $E_t=\bd B(x, tR)$.
We find $a>0$ such that
for each $y\in E_t$ and $0<r<\frac14R$ there exists a ball of radius $ar$ lying in $B(y,r)\sm \Dom$.
This means that
\[
\frac{\mu(B(y,r)\sm\Dom)}{\mu(B(y,r))}\ge\ve
\]
with some $\ve>0$ depending only on $a$ and the doubling constant.
By Lemmas \ref{lem:v<c} and \ref{lem:1-capa=hm} we have
\begin{equation}\label{eq:<1-e}
\sup_{\clB(y,r)} \hm{\bd B(y,2r)}{B(y,2r)\cap \Dom}\le 1-\ve'
\end{equation}
with $\ve'>0$ independent of $x,\ R,\ t,\ y$ and $r$.

The technique in the proof of \cite[Theorem 1]{MR856511} yields
a positive superharmonic function $s$ in $\Dom$ such that
\begin{equation}\label{eq:s=da}
s\approx\dist(\cdot,E_t)^\alpha,
\end{equation}
where
$\alpha>0$ and the constants of comparison are independent of $x,\ R$ and $t$.
In fact, let $r_k=4^k$, $k\in\Z$. For each $k\in\Z$ choose a locally finite covering of
$E_t$ by open balls $B(x_{kj},\frac14r_k)$, $j\in J_k$; let $B_{kj}=B(x_{kj},r_k)$.
By \refeq{<1-e} we find a positive continuous function $u_{kj}$ in $\Dom\cap\cl B_{kj}$,
superharmonic in $\Dom\cap B_{kj}$, such that
$\ve''\le u_{kj}\le2$ in $\Dom\cap B_{kj}$,
$u_{kj}\ge1$ in $\Dom\cap\bd B_{kj}$,
$u_{kj}\le1-\ve''$ in $\Dom\cap\frac12B_{kj}$,
where $\ve''$ is a small positive constant depending only on $\ve'$.
Let $A=1-\frac12\ve''$ and extend $u_{kj}$ on $\Dom\sm\cl B_{kj}$ by $u_{kj}=\infty$.
Then
\[
s(x)=\inf\{A^{-k}u_{kj}(x): k\in\Z,\ j\in J_k\},\quad x\in\Dom
\]
is a superharmonic function in $\Dom$ satisfying \refeq{s=da} with $\alpha=|\log A|/\log4$.
Actually, we can make $s$ a strong barrier.
In the present context, however, superharmonicity is enough.

From \refeq{s=da}, we find a positive constant $\const$ independent of $x,\ R$ and $t$ such that
\[
\frac{s}{\const R^\alpha}\ge1\qtext{on } \bd B(x,3R/2).
\]
Let $u$ be the capacitary potential for $\clB(x,tR)$ in $B(x,2R)$, i.e.,
\[
\begin{split}
&\Delta u=0 \qtext{in }B(x,2R)\sm \clB(x,tR),\\
&u=1 \qtext{on } \clB(x,tR),\\
&u=0 \qtext{on } \bd B(x,2R),\\
&\capain{B(x,2R)}(\clB(x,tR))=\int_{B(x,2R)}|\nabla u|^2d\mu.
\end{split}
\]
Since $1-u\le s/(\const R^\alpha)$ on $\bd B(x,3R/2)$,
it follows from the maximum principle
\[
1-u\le \frac s{\const R^\alpha}
\approx \frac{\dist(\cdot,E_t)^\alpha}{R^\alpha}\qtext{in } B(x,3R/2)\sm \cl B(x,tR).
\]
Hence
\[
u\ge 1-\const\frac{((1-t)R)^\alpha}{R^\alpha}=1-\const(1-t)^\alpha \qtext{in } B(x,R)\sm \cl B(x,tR)
\]
with another positive constant $\const$.
If $1-\const(1-t)^\alpha>0$, then by definition,
\[
\capain{B(x,2R)}(\clB(x,R))
\le\frac1{(1-\const(1-t)^\alpha)^2} \int_{B(x,2R)}|\nabla u|^2d\mu
= \frac{\capain{B(x,2R)}(\clB(x,tR))}{(1-\const(1-t)^\alpha)^2}.
\]
Hence
\[
\frac{\capain{B(x,2R)}(\clB(x,tR))}{\capain{B(x,2R)}(\clB(x,R))}
\ge {(1-\const(1-t)^\alpha)^2},
\]
so that the lemma follows as $\lim_{t\to1}(1-\const(1-t)^\alpha)^2=1$.
\end{proof}

\begin{proof}[Proof of \refthm{cwcomp}]
By definition the first inequality holds for arbitrary open sets $\dom$.
Let us prove the second inequality.
In view of \reflem{capaB},
we find an integer $N\ge2$ depending only on $\sqrt K\,R_0$ and $\di$ such that
\begin{equation}\label{eq:cap1-Ninv}
\frac{\capain{B(x,2R)}(\clB(x,(1-N\inv)R))}{\capain{B(x,2R)}(\clB(x,R))}
\ge \sqrt{\eta}
\end{equation}
uniformly for $x\in M$ and $0<R<R_0$.
Let $\refc{1-capa=hm}$ be as in \reflem{1-capa=hm} and
take an integer $k>2$ so large that
$(1-\refc{1-capa=hm}\inv\eta')^k\le 1-\sqrt{\eta}$.

Let $\cw(\dom)<R_0$. We prove the theorem by showing
\begin{equation}\label{eq:cw<2Nkcw'}
\cw(\dom)\le 2Nk w_{\eta'}(\dom).
\end{equation}
If
\(
w_{\eta'}(\dom)\ge R_0/(2Nk),
\)
then $\cw(\dom)<R_0\le 2Nk w_{\eta'}(\dom)$, so \refeq{cw<2Nkcw'} follows.
Suppose
\[
w_{\eta'}(\dom)< \frac{R_0}{2Nk}.
\]
For simplicity we write $\rho=w_{\eta'}(\dom)$.
Apply \reflem{Omega<(1-Aeta)k}, with $\eta'$ in place of $\eta$, to $x\in \dom$ and $R=2Nk\rho$.
We obtain
\[
\sup_{\dom\cap\clB(x,R-2k\rho)}\hm{\dom\cap\bd B(x,R)}{\dom\cap B(x,R)}
\le(1-\refc{1-capa=hm}\inv\eta')^k
\le1-\sqrt{\eta}.
\]
Let $E=\clB(x,R)\sm \dom$.
Then the maximum principle yields
\[
\hm{\bd B(x,2R)}{B(x,2R)\sm E}
\le \hm{\dom\cap\bd B(x,R)}{\dom\cap B(x,R)}
\qtext{in }\dom\cap B(x,R),
\]
so that
\[
\hm{\bd B(x,2R)}{B(x,2R)\sm E}
\le1-\sqrt{\eta}
\qtext{in }\clB(x,R-2k\rho),
\]
where we use the convention $\hm{\bd B(x,2R)}{B(x,2R)\sm E}=0$ in $E$.
Hence, \reflem{1-capa=hm} (i)
with $R-2k\rho$ and $2R$ in place of $r$ and $R$ gives
\[
1- \frac{\capain{B(x,2R)}(E)}{\capain{B(x,2R)}(\clB(x,R-2k\rho))}
\le 1-\sqrt{\eta},
\]
so that
\[
\frac{\capain{B(x,2R)}(E)}{\capain{B(x,2R)}(\clB(x,R-2k\rho))}
\ge \sqrt{\eta}.
\]
Multiplying the inequality and \refeq{cap1-Ninv}, we obtain
\[
\frac{\capain{B(x,2R)}(E)}{\capain{B(x,2R)}(\clB(x,R))}
\ge \eta,
\]
as $R-2k\rho=(1-N\inv)R$.
Since $x\in \dom$ is arbitrary,
we have $\cw(\dom)<R=2Nk\rho=2Nk w_{\eta'}(\dom)$.
Thus we have \refeq{cw<2Nkcw'}.
\end{proof}

\begin{proof}[Proof of \refthm{vd=cw2}]
First, let us prove the second inequality of \refeq{cw<G<cw}, i.e.,
$\normi{v_\dom}\le\const\cw(\dom)^2$.
In view of the monotonicity of the torsion function,
we may assume that $\dom$ is bounded and hence $\normi{v_\dom}<\infty$.
By definition we find $r$, $\cw(\dom)\le r<2\cw(\dom)<2R_0$, such that
\[
\frac{\capain{B(x,2r)}(\clB(x,r)\sm\dom)}{\capain{B(x,2r)}(\clB(x,r))}
\ge\eta
\qtext{for every }x\in\dom.
\]
For a moment we fix $x\in \dom$ and let $B=B(x,r)$,
$\BB=B(x,2r)$, and $E=\clB\sm\dom$ for simplicity.
Then $\capain\BB(E)/\capain\BB(\clB)\ge\eta$.
We compare $v_\dom$ with
\[
v_\BB=\int_{\BB}G_{\BB}(\cdot,y)d\mu(y).
\]
It is easy to see that
$v_\dom-v_\BB$ is harmonic in $\dom\cap\BB$
and $v_\dom=0$ on $\bdy$ outside a polar set.
Hence the maximum principle yields
\[
v_\dom-v_\BB\le \normi{v_\dom}\hm{\dom\cap\bd\BB}{\dom\cap\BB}
\qtext{in } \dom\cap\BB.
\]
Since \reflem{1-capa=hm} implies that
\[
\hm[x]{\dom\cap\bd\BB}{\dom\cap\BB}
\le \hm[x]{\bd\BB}{\BB\sm E}
\le 1-\refc{1-capa=hm}^{-1}\eta,
\]
it follows from \reflem{vB} that
\[
\begin{split}
v_\dom(x)
\le v_\BB(x)+ \normi{v_\dom}\hm[x]{\dom\cap\bd\BB}{\dom\cap\BB}
\le \const r^2+ \normi{v_\dom}(1-\refc{1-capa=hm}^{-1}\eta).
\end{split}
\]
Taking the supremum with respect to $x\in\dom$, we obtain
\[
\normi{v_\dom}\le\const\refc{1-capa=hm}\eta\inv r^2
\le 4\const\refc{1-capa=hm}\eta\inv \cw(\dom)^2.
\]

Second, let us prove the first inequality of \refeq{cw<G<cw}, i.e.
$\cw(\dom)^2\le\const\normi{v_\dom}$.
We distinguish two cases.
Suppose first $\normi{v_\dom}\ge\refc{vB>r2}R_0^2/2$ with $\refc{vB>r2}$ as in \refeq{vB>r2}.
Then
\[
\normi{v_\dom}\ge\refc{vB>r2}R_0^2/2>\refc{vB>r2}\cw(\dom)^2/2,
\]
as required.
Suppose next $\normi{v_\dom}<\refc{vB>r2}R_0^2/2$.
Take $R$ such that
\begin{equation}\label{eq:r2=vd}
\normi{v_\dom}
=\frac{\refc{vB>r2}R^2}2.
\end{equation}
Then $0<R<R_0$.
Let $x\in\dom$.
This time,  we let $B=B(x,R)$, $\BB=B(x,2R)$ and $E=\clB\sm\dom$
with $R$ as in \refeq{r2=vd}.
We shall compare $v_\dom$ with the torsion function
\[
v_B=\int_{B} G_{B}(\cdot,y)d\mu(y).
\]
Observe that $v_B-v_\dom$ is harmonic in $\dom\cap B$.
By the maximum principle and \reflem{vB}
\[
\begin{split}
v_B-v_\dom
&\le\sup_E v_B\cdot\hm{\bd E}{B\sm E}
=\sup_E v_B\cdot(1-\hm{\dom\cap\bd B}{B\sm E})\\
&\le \const R^2(1-\hm{\bd\BB}{\BB\sm E})
\qtext{in }\dom\cap B,
\end{split}
\]
since
\(
\bd(\dom\cap B)
\subset(B\cap\bdy)\cup(\dom\cap\bd B)
\subset E\cup\bd B,
\)
and since  $v_B=0$ on $\bd B$.
Let $0<\ve<1$ be as in \refeq{vB>r2}.  Taking the infimum over $\clB(x,\ve R)$,
we obtain from \reflem{1-capa=hm} that
\[
\inf_{\clB(x,\ve R)}v_B-\normi{v_\dom}
\le \const R^2\Big(1-\sup_{\clB(x,\ve R)}\hm{\bd\BB}{\BB\sm E}\Big)
\le \const R^2\frac{\capain{\BB}(E)}{\capain{\BB}(\clB(x,\ve R))}.
\]
Hence, \refeq{vB>r2} and \refeq{r2=vd} yield
\[
\refc{vB>r2} R^2-\frac{\refc{vB>r2}R^2}2
\le \const R^2\frac{\capain{\BB}(E)}{\capain{\BB}(\clB(x,\ve R))}.
\]
Dividing by $\const R^2$, we obtain
\[
\frac{\capain{\BB}(E)}{\capain{\BB}(\clB(x,\ve R))}
\ge\frac{\refc{vB>r2}}{2\const},
\]
so that, by \reflem{v<c} and volume doubling
\[
\frac{\capain{\BB}(E)}{\capain{\BB}(\clB(x,R))}
=\frac{\capain{\BB}(E)}{\capain{\BB}(\clB(x,\ve R))}\cdot
 \frac{\capain{\BB}(\clB(x,\ve R))}{\capain{\BB}(\clB(x,R))}
\ge \frac{\refc{vB>r2}}{2\const}\cdot\frac{\const\mu(\clB(x,\ve R))}{\mu(\clB(x,R))}
\ge\eta'
\]
with $0<\eta'<1$ depending only on $\sqrt K\,R_0$ and $\di$.
Thus
\[
\frac{\capain{\BB}(\clB(x,R)\sm\dom)}{\capain{\BB}(\clB(x,R))}\ge\eta'.
\]
Since $x\in\dom$ is arbitrary,
we have  $w_{\eta'}(\dom)<R$
and so $\cw(\dom)\le\const R$ by \refthm{cwcomp}.
Hence $\cw(\dom)^2\le\const\normi{v_\dom}$ by \refeq{r2=vd}.
The proof is complete.
\end{proof}

\section{Proof of \refthm{IU}}

The crucial step of the proof of \refthm{IU} is the following
\emph{parabolic box argument} (cf. \cite[Lemma 4.1]{MR3420485}),

\begin{lem}\label{lem:P<Ag}
Suppose \refeq{IUint} holds.
If $t>0$, then
\begin{equation}\label{eq:P<Ag}
P_\dom(t,x)\le\const_t G_\dom(x,o)
\qtext{for }x\in\dom
\end{equation}
with $\const_t$ depending on $t$.
\end{lem}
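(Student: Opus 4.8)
The plan is to run a \emph{box argument} along the super-level sets at $o$ of the Green function $u:=G_\dom(\cdot,o)$, using \refthm{P<exp(A/w2)} to control survival inside thin regions. We may assume $\dom$ is Greenian, as otherwise the claim is vacuous. Fix $a\in(0,1)$; convergence of the integral in \refeq{IUint} near $0$ forces $\cw(\{u<s\})\to0$ as $s\to0$, so after replacing $a$ by a smaller number we may assume $a<\tau$ and $\cw(\{u<a\})<R_0$. Put $D_j:=\{x\in\dom:u(x)<a^j\}$ for $j\ge1$, so $D_1\supset D_2\supset\cdots$ and $\cw(D_j)\le\cw(D_1)<R_0$; thus \refthm{P<exp(A/w2)} gives, for all $j\ge1$, $s>0$, $y\in D_j$,
\[
P_{D_j}(s,y)\le\refc{ke1}\exp\!\Big(-\frac{\refc{ke2}\,s}{\cw(D_j)^2}\Big).
\]
Since $s\mapsto\cw(\{u<s\})$ is nondecreasing, \refeq{IUint} yields $\sum_{j\ge1}\cw(D_j)^2\le(\log(1/a))\inv\int_0^{a}\cw(\{u<s\})^2\,ds/s<\infty$; and because $(\cw(D_j))_j$ is itself nonincreasing, this forces $\cw(D_j)^2=o(1/j)$.

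The iteration rests on two observations. \emph{Potential theory}: since $o\notin D_j$, the function $u/a^j$ is the bounded harmonic function in $D_j$ with boundary values $1$ on $\bd D_j\cap\dom=\{u=a^j\}$ and $0$ on $\bdy$; hence $\min(u/a^j,1)=\red[\dom]{\{u\ge a^j\}}{1}$, i.e.\ Brownian motion started at $x\in D_j$ leaves $D_j$ through $\{u=a^j\}$, rather than being absorbed on $\bdy$, with probability exactly $u(x)/a^j$. In particular the probability of ever reaching the ``core'' $\dom\sm D_{j_0}$ before $\bdy$ is $\min(u(x)/a^{j_0},1)$, while the expected time to exit $D_j$ is $\normi{v_{D_j}}\le\const\cw(D_j)^2$ by \refthm{vd=cw2}. \emph{Parabolic}: with $t_j:=\refc{ke2}\inv\log(2\refc{ke1})\,\cw(D_j)^2$ one has $\refc{ke1}\exp(-\refc{ke2}t_j/\cw(D_j)^2)=\tfrac12$, so the chance of staying in $D_j$ throughout any time interval of length $t_j$ is at most $\tfrac12$, irrespective of where that interval begins.

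Given $t>0$, choose $j_0\ge1$ so large that $\sum_{j\ge j_0}\cw(D_j)^2$ is small compared with $t$ (possible since the series converges; this is where $\const_t$ picks up its $t$-dependence), and note that on $\dom\sm D_{j_0}$ one has $P_\dom(t,x)\le1\le a^{-j_0}u(x)$. For $x\in D_{j_0}$, write $x\in D_{j_0+m}\sm D_{j_0+m+1}$, so $u(x)\approx a^{j_0+m}$. Decompose a path surviving up to time $t$ according to the successive first exit times of $D_{j_0+m},D_{j_0+m-1},\dots,D_{j_0}$: at level $j$ it either makes one further length-$t_j$ excursion inside $D_j$ (a factor $\le\tfrac12$ in probability, a cost $t_j$ in time), crosses $\{u=a^j\}$ down into $D_{j-1}$ (a factor $u/a^{j-1}$, telescoping along the descent so that $u(x)/a^{j_0+m}$ becomes $u(x)/a^{j_0}$), or is absorbed on $\bdy$ and no longer contributes. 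The paths reaching the core within time $t$ contribute at most $a^{-j_0}u(x)$; the paths trapped in some $D_i$ contribute, after telescoping the probability factors, a sum over $j_0\le i\le j_0+m$ of terms which — using the decay estimate, the bound $\cw(D_j)^2=o(1/j)$, and the fact that $\sum_{j\ge j_0}t_j<t$ leaves a positive time budget at every level — are each bounded by $u(x)$ times the $i$-th term of a convergent series. Summing everything gives $P_\dom(t,x)\le\const_t u(x)$, with $\const_t$ depending on $\dom$, $M$, and (through $j_0$) on $t$.

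The genuinely delicate point is this last bookkeeping. A single application of the two observations gives only $P_\dom(t,x)\le a^{-j_0}u(x)+\refc{ke1}\exp(-\refc{ke2}t/\cw(D_{j_0})^2)$, whose second term is a positive constant and so useless when $u(x)$ is very small; it is the multiscale iteration — together with the combination of $\cw(D_j)^2=o(1/j)$ and the summability $\sum_j\cw(D_j)^2<\infty$ — that is needed to turn the accumulated ``trapped'' error into a quantity proportional to $u(x)$ while keeping the total time spent below $t$. Everything else — the reduction to $x\in D_{j_0}$, the balayage identity, and the survival decay — is immediate from \refthm{P<exp(A/w2)}, \refthm{vd=cw2} and the potential theory recalled earlier.
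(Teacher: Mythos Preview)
Your sketch has the right architecture — level sets of the Green function, a time budget, and the survival decay from \refthm{P<exp(A/w2)} — but the choice of \emph{geometric} levels $a^j$ is where it breaks, and the ``bookkeeping'' you defer cannot be completed with the tools you list. Run the iteration cleanly (this is what your probabilistic decomposition amounts to): with $E_k=\{a^{k+1}\le u<a^k\}$, $q_k=\sup_{E_k,\,s\ge T_k}P_\dom(s,x)/u(x)$, and parabolic comparison on $(T_{k-1},\infty)\times D_k$, one gets
\[
q_k\le q_{k-1}+\frac{\refc{ke1}}{a^{k+1}}\exp\!\Big(-\frac{\refc{ke2}(T_k-T_{k-1})}{\cw(D_k)^2}\Big).
\]
For $\sup_k q_k<\infty$ you need the increments $\Delta T_k$ to satisfy simultaneously $\sum_k\Delta T_k<t$ and $\sum_k a^{-k}\exp(-\refc{ke2}\Delta T_k/\cw(D_k)^2)<\infty$. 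The second forces $\Delta T_k\gtrsim k\,\cw(D_k)^2$, and then the first demands $\sum_k k\,\cw(D_k)^2<\infty$. But from \refeq{IUint} the geometric discretisation gives only $\sum_k\cw(D_k)^2<\infty$; the extra factor $k$ is \emph{not} available (take $\cw(D_k)^2\asymp 1/(k\log^2k)$ to see the failure). Your two facts, $\sum_k\cw(D_k)^2<\infty$ and $\cw(D_k)^2=o(1/k)$, are both consequences of summability and add nothing here.

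The paper closes this gap by a different discretisation: it takes \emph{super-exponential} levels $\alpha_j=e^{-2^j}$. Then the integral condition yields $\sum_j 2^{j}\cw(\{u<\alpha_j\})^2<\infty$ directly, so one may set $\Delta T_k=\dfrac{3}{\refc{ke2}}\,2^{k}\cw(D_k)^2$ with finite total time, while
\[
\frac{1}{\alpha_{k+1}}\exp\!\Big(-\frac{\refc{ke2}\Delta T_k}{\cw(D_k)^2}\Big)
=\exp(2^{k+1}-3\cdot2^{k})=\exp(-2^{k}),
\]
which is summable. The whole argument then reduces to the clean recursion $q_k\le q_{k-1}+\refc{ke1}e^{-2^{k}}$. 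In short: the scale $\alpha_j=e^{-2^j}$ is not cosmetic — it is exactly what balances the exponential level ratio against the survival decay while keeping the time budget finite, and this balance cannot be struck with $a^j$.
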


\begin{proof}
Without loss of generality we may assume that $\tau=1$ in \refeq{IUint}.
For notational convenience we shall prove \refeq{P<Ag} with $T$ in place of $t$.
For simplicity we write $\cw(\Gdo<s)=\cw(\{x\in\dom: G_\dom(x,o)<s\}$.
Let $\alpha_j=\exp(-2^j)$.
Since
\[
\begin{split}
\int_{\alpha_j}^{\alpha_{j-1}}\cw(\Gdo<s)^2\frac{ds}s
&\ge \cw(\Gdo<\alpha_j)^2 \int_{\alpha_j}^{\alpha_{j-1}}\frac{ds}s\\
&=\cw(\Gdo<\alpha_j)^2(2^j-2^{j-1})
=2^{j-1}\cw(\Gdo<\alpha_j)^2,
\end{split}
\]
it follows from \refeq{IUint} that
$\sum_{j=0}^\infty 2^{j}\cw(\Gdo<\alpha_j)^2<\infty$.

Let $\cw(\Gdo<1)<R_0<\infty$ and
choose $\refc{ke1}$ and $\refc{ke2}$  as in \refthm{P<exp(A/w2)}.
We find $j_0\ge0$ such that
\begin{equation}\label{eq:<T}
\frac{3}{\refc{ke2}}\sum_{j=j_0+1}^\infty 2^{j}\cw(\Gdo<\alpha_j)^2<T.
\end{equation}
Define
\[
t_k
=\frac{3}{\refc{ke2}}\sum_{j=j_0+1}^{k} 2^j\cw(\Gdo<\alpha_j)^2
\qtext{for $k\ge j_0+1$,}
\]
and $t_{j_0}=0$.
Then $t_k$ increases and $\lim_{k\to\infty}t_k<T$ by \refeq{<T}.
Observe that
\begin{equation}\label{eq:=e-2k}
\frac1{\alpha_{k+1}}\exp\Big(-\frac{\refc{ke2}(t_k-t_{k-1})}{\cw(\Gdo<\alpha_k)^2}\Big)
=\exp(2^{k+1}-3\cdot2^k)
=\exp(-2^k)
\end{equation}
for $k\ge j_0+1$.

Let
$\dom_k=\{x\in\dom: G_\dom(x,o)<\alpha_k\}$,
$E_k=\{x\in\dom: \alpha_{k+1}\le G_\dom(x,o)<\alpha_k\}$,
$\ddom_k=(t_{k-1},\infty)\times \dom_k$ and
$\EE_k=(t_{k},\infty) \times E_k$.
Put
\[
q_k=\sup_{(t,x)\in\EE_k}\frac{P_\dom(t,x)}{G_\dom(x,o)}.
\]
We claim that $\sup_{k\ge j_0+1}q_k\le\Const$, which implies \refeq{P<Ag}
with $T$ in place of $t$, and $\const_T=\max\{\Const,1/\alpha_{j_0+1}\}$
since $(T,\infty)\times \{x\in\dom: G_\dom(x,o)<\alpha_{j_0+1}\}\subset \union_{k\ge j_0+1}\EE_k$
by \refeq{<T}.
See \reffig{pba}.
\begin{figure}[htb]
\begin{center}
\scriptsize
\begin{overpic}[scale=.65]
{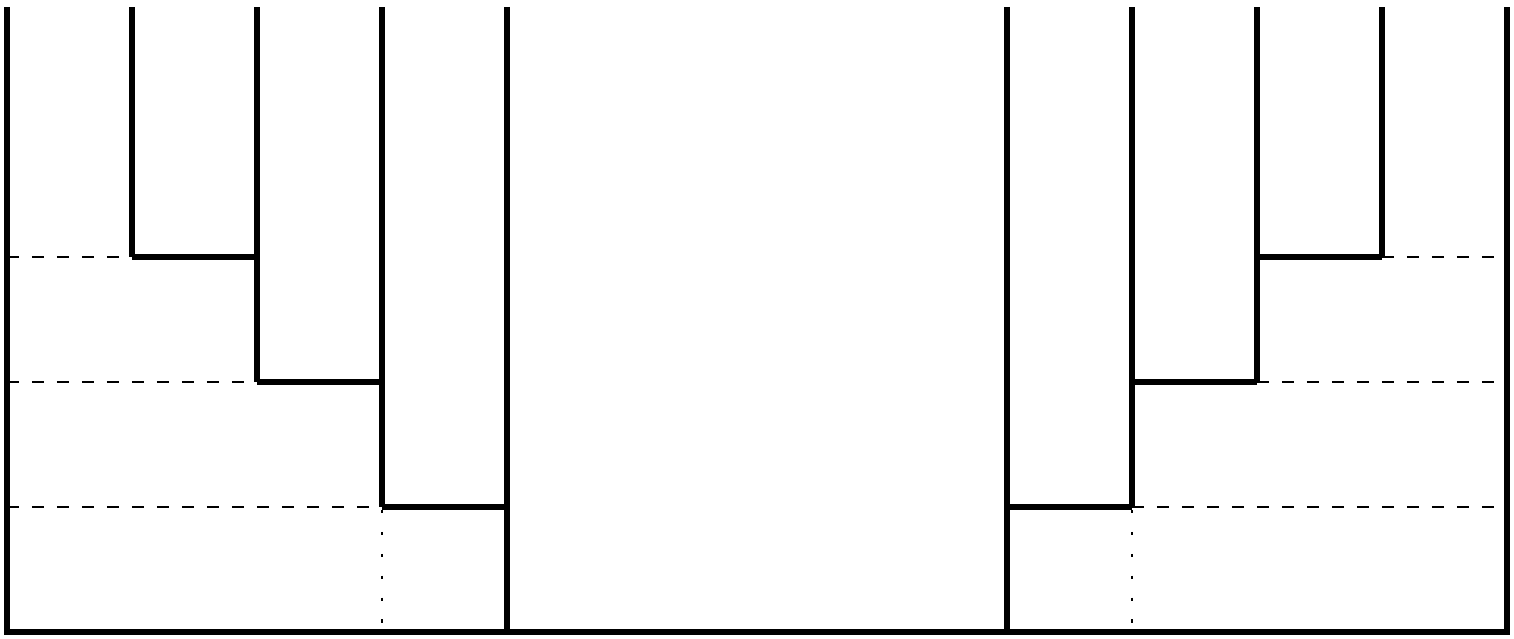}
\put(290,110){$<T$}
\put(294,90){$\increase$}
\put(290,70){$t_{j_0+3}$}
\put(290,47){$t_{j_0+2}$}
\put(290,22){$t_{j_0+1}$}
\put(290,0){$t_{j_0}=0$}

\put(-20,110){$<T$}
\put(-15,90){$\increase$}
\put(-19,70){$t_{j_0+3}$}
\put(-19,47){$t_{j_0+2}$}
\put(-19,22){$t_{j_0+1}$}
\put(-19,0){$t_{j_0=0}$}

\put(22,-15){$G_\dom(\cdot,o)<\alpha_{j_0+1}$}
\put(214,-15){$G_\dom(\cdot,o)<\alpha_{j_0+1}$}

\put(46,9){$\ddom_{j_0+1}$}
\put(230,9){$\ddom_{j_0+1}$}

\put(30,32){$\ddom_{j_0+2}$}
\put(242,32){$\ddom_{j_0+2}$}

\put(16,56){$\ddom_{j_0+3}$}
\put(256,56){$\ddom_{j_0+3}$}

\put(191,55){$\EE_{j_0+1}$}
\put(215,75){$\EE_{j_0+2}$}
\put(238,95){$\EE_{j_0+3}$}

\put(74,55){$\EE_{j_0+1}$}
\put(50,75){$\EE_{j_0+2}$}
\put(27,95){$\EE_{j_0+3}$}

\put(115,55){$G_\dom(\cdot,o)\ge\alpha_{j_0+1}$}
\end{overpic}
\vrule width0pt depth14pt 
\end{center}
\caption{Parabolic box argument.}\label{fig:pba}
\end{figure}

By the parabolic comparison principle over $\ddom_{j_0+1}$ we have
\[
P_\dom(t,x)\le\frac{G_\dom(x,o)}{\alpha_{j_0+1}}+P_{\dom_{j_0+1}}(t,x)
\qtext{for }(t,x)\in\ddom_{j_0+1}=(0,\infty)\times\dom_{j_0+1}.
\]
Divide the both sides by $G_\dom(x,o)$ and take the supremum over $\EE_{j_0+1}$.
Then \refeq{ke} and \refeq{=e-2k} yield
\[
\begin{split}
q_{j_0+1}
&\le\frac1{\alpha_{j_0+1}}+\sup_{(t,x)\in\EE_{j_0+1}}\frac{P_{\dom_{j_0+1}}(t,x)}{G_\dom(x,o)}
\le \frac1{\alpha_{j_0+1}}+\frac{\refc{ke1}}{\alpha_{j_0+2}}
   \sup_{t\ge t_{j_0+1}}\exp\Big(-\frac{\refc{ke2}t}{\cw(\dom_{j_0+1})^2}\Big)\\
&\le \frac1{\alpha_{j_0+1}}+\frac{\refc{ke1}}{\alpha_{j_0+2}}
   \exp\Big(-\frac{\refc{ke2}(t_{j_0+1}-t_{j_0+1})}{\cw(\dom_{j_0+1})^2}\Big)
=\exp(2^{j_0+1})+\refc{ke1}\exp(-2^{j_0+1}).
\end{split}
\]
Let $k\ge j_0+2$.
By the parabolic comparison principle over $\ddom_k$ we have
\[
P_\dom(t,x)\le q_{k-1} G_\dom(x,o)+P_{\dom_k}(t-t_{k-1},x)
\qtext{for }(t,x)\in\ddom_k=(t_{k-1},\infty)\times\dom_k.
\]
Divide the both sides by $G_\dom(x,o)$ and take the supremum over $\EE_k$.
In the same way as above, we obtain from \refeq{ke} and \refeq{=e-2k} that
\[
q_k
\le q_{k-1}+
\frac{\refc{ke1}}{\alpha_{k+1}}
\exp\Big(-\frac{\refc{ke2}(t_{k}-t_{k-1})}{\cw(\dom_k)^2}\Big)
\le q_{k-1}+\refc{ke1}\exp(-2^k).
\]
Hence we have the claim as
\[
\sup_{k\ge j_0+1}q_k\le\exp(2^{j_0+1})+\refc{ke1}\sum_{k=j_0+1}^\infty \exp(-2^k)<\infty.
\]
The lemma is proved.
\end{proof}

\begin{proof}[Proof of \refthm{IU}]
By \refthm{nes} we have the first condition for IU.
Let us show \refeq{IU} for every $t>0$.
It is known that the lower estimate of \refeq{IU} follows from the upper estimate.
Moreover, if $p_\dom(t_0,x,y)\le C_{t_0}\feigenf (x)\feigenf (y)$ for all $x,y\in\dom$
with some $t_0>0$, then
$p_\dom(t,x,y)\le C_t\feigenf (x)\feigenf (y)$ holds
with $C_t\le C_{t_0} e^{-\feigen (t-t_0)}$ for $t\ge t_0$
(See e.g. \cite[Proposition 2.1]{MR3420485}).
Hence, it suffices to show the upper estimate of \refeq{IU} for small $t>0$.

Since $\feigenf$ is superharmonic, and since $G_\dom(\cdot,o)$ is harmonic outside $\{o\}$,
we have $G_\dom(\cdot,o)\le\const \feigenf$ apart from a neighborhood of $o$.
So, it is sufficient to show that if $t>0$ small, then there exists $\const_t>0$ such that
\begin{equation}\label{eq:<Gdo}
p_\dom(t,x,y)\le\const_t G_\dom(x,o)G_\dom(y,o)
\qtext{for }x,y\in \dom.
\end{equation}

Let $i_0$ be the injectivity radius of $M$.
It is known that
\[
\mu(B(x,r))\ge \const r^\di
\qtext{for $0<r<i_0/2$ and $x\in M$.}
\]
where $\const>0$ depends only on $M$ (Croke \cite[Proposition 14]{MR608287}).
Hence, the Gaussian estimate \refeq{GER} yields
\begin{equation}\label{eq:pM<t-n/2}
p_M(t,x,y)\le\frac\const{V(x,\sqrt t)}\le\const t^{-\di/2}
\end{equation}
for $0<t<\min\{R_0^2,(i_0/2)^2\}$ and $x,y\in M$.
Let $0<t<\min\{R_0^2,(i_0/2)^2\}$ and $x,y,z\in\dom$.
By \refeq{pM<t-n/2} we have
\[
\begin{split}
p_\dom(2t,z,y)
&=\int_\dom p_\dom(t,z,w)p_\dom(t,w,y)d\mu(w)
\le\int_\dom p_M(t,z,w)p_\dom(t,w,y)d\mu(w)\\
&\le\const t^{-\di/2}\int_\dom p_\dom(t,w,y)d\mu(w)
=\const t^{-\di/2} P_\dom(t,y),
\end{split}
\]
since the heat kernel is symmetric.
Moreover,
\[
\begin{split}
p_\dom(3t,x,y)
&\le\int_\dom p_\dom(t,x,z)p_\dom(2t,z,y)d\mu(z)
\le\int_\dom p_\dom(t,x,z)\const t^{-\di/2} P_\dom(t,y) d\mu(z)\\
&
=\const t^{-\di/2} P_\dom(t,x)P_\dom(t,y).
\end{split}
\]
Hence \reflem{P<Ag} yields
\[
p_\dom(3t,x,y)
\le\const t^{-\di/2} P_\dom(t,x)P_\dom(t,y)
\le\const_t G_\dom(x,o)G_\dom(y,o).
\]
Replacing $3t$ by $t$, we obtain \refeq{<Gdo} for small $t>0$.
Thus the theorem is proved.
\end{proof}

\begin{rem}\label{rem:vol>}
The assumption on the injectivity radius can be replaced  by
\begin{equation}\label{eq:infvol}
\inf_{x\in M} \mu(B(x,R_0))>0.
\end{equation}
In fact,
\refeq{VDa} yields
\[
\mu(B(x,r))\ge\const \Big(\frac r{R_0}\Big)^\alpha\inf_{x\in M} \mu(B(x,R_0))
\qtext{for all $x\in M$ and  $0<r<R_0$,}
\]
and hence for small $t>0$,
\[
p_M(t,x,y)\le\frac\const{V(x,\sqrt t)}\le\const t^{-\alpha/2}.
\]
Replacing \refeq{pM<t-n/2} by this inequality, we obtain
\[
p_\dom(3t,x,y)
\le\const t^{-\alpha/2} P_\dom(t,x)P_\dom(t,y)
\le\const_t G_\dom(x,o)G_\dom(y,o),
\]
which proves \refthm{IU}.
See \cite{MR961611} for further discussion on \refeq{infvol}.
\end{rem}

\section{Remarks}\label{sec:rem}

Once we obtain the theorems in Section \ref{sec:main},
we can extend many Euclidean results to the setting of manifolds.
Proofs are almost the same as in the Euclidean case.
For instance, we relax the requirement of inner uniformity for IU
assumed in \cite[Theorem 7.9]{MR3170207}.
For a curve $\gamma$ in $M$ we denote the length of $\gamma$ and
the subarc of $\gamma$ between $x$ and $y$
by $\ell(\gamma)$ and $\gamma(x,y)$, respectively.
For a domain $\dom$ in $M$ we define the inner metric in $\dom$ as
\[
\inmet(x,y)=\inf\{\ell(\gamma): \text{ $\gamma$ is a curve connecting $x$ and $y$ in $\dom$}\}.
\]

\begin{defn}\label{defn:inuni+john}
Let $\dom$ be a domain in $M$ and let $\dis(x)=\dist(x,M\sm\dom)$.

(i)
We say that $\dom$ is a John domain if there exist $o\in\dom$ and $\const\ge1$ such that
every $x\in\dom$ is connected to $o$ by a rectifiable curve $\gamma\subset\dom$ with the property
\[
\ell(\gamma(x,z))\le \const\dis(z)\qtext{for all }z\in\gamma.
\]

(ii)
We say that $\dom$ is an inner uniform domain if there exists $\const\ge1$ such that
every pair of points $x,y\in\dom$ can be connected by a rectifiable curve $\gamma\subset\dom$
with the properties $\ell(\gamma)\le\const\inmet(x,y)$ and
\[
\min\{\ell(\gamma(x,z),\ell(\gamma(z,y)\}
\le \const\dis(z)\qtext{for all }z\in\gamma.
\]
\end{defn}

If we replace $\inmet(x,y)$ by the ordinary metric $d(x,y)$ in (ii), then we obtain a uniform domain.
By definition a John domain is necessarily bounded.
We have the following inclusions for these classes of bounded domains:
\[
\text{uniform}\subsetneqq
\text{inner uniform}\subsetneqq
\text{John}.
\]

\reffig{jon-iu} depicts a John domain that is not inner uniform.
We find a curve connecting $x$ and $o$ with the property of
\refdefn{inuni+john} (i);
yet there is no curve connecting $x$ and $y$ with the properties
of \refdefn{inuni+john} (ii)
if the gaps on the vertical segment shrink sufficiently fast.

\begin{figure}[htb]
\begin{center}
\scriptsize
\begin{overpic}[height=120pt]
{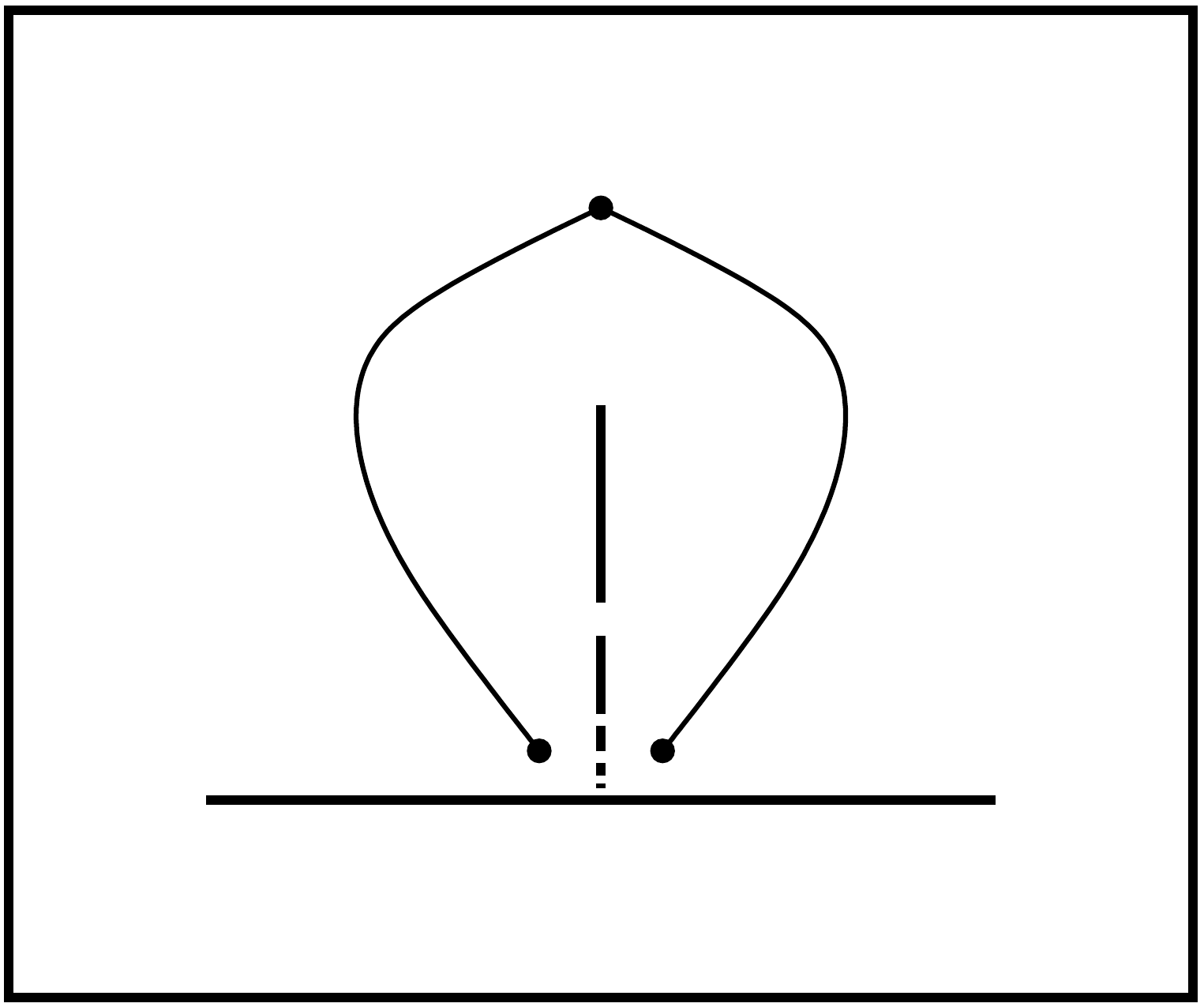}
\put(105,90){\large$\dom$}
\put(70,100){$o$}
\put(55,28){$x$}
\put(85,28){$y$}
\put(95,15){$\bdy$}
\end{overpic}
\end{center}
\caption{A John domain that is not inner uniform.}\label{fig:jon-iu}
\end{figure}

\begin{thm}\label{thm:John=IU}
A John domain is IU.
\end{thm}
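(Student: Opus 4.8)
The plan is to verify the two hypotheses of \refthm{IU}. A John domain is bounded, so the first hypothesis is immediate — $\dom\sm\cl B(o,R)=\emptyset$ for all large $R$, whence $\cw(\dom\sm\cl B(o,R))=0$ eventually — and, $\cl\dom$ being compact, $\inf_{x\in\cl\dom}\mu(B(x,R_0))>0$, so by \refrem{vol>} the positive‑injectivity‑radius hypothesis of \refthm{IU} is not actually needed here. Thus everything reduces to checking \refeq{IUint}. I would fix once and for all an $R_0$ exceeding $\diam\dom$ and $\cw(\dom)$ (both finite), allow all constants below to depend on $\dom$ (in particular on its John constant), $K$, $R_0$ and $\di$, and take $o$ to be the John centre of $\dom$ — by the Harnack inequality a change of reference point alters $G_\dom(\cdot,o)$ only by a bounded factor away from the point, so the validity of \refeq{IUint} is unaffected. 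Then it suffices to prove a power bound: for some $\beta>0$,
\[
\cw\bigl(\{x\in\dom:G_\dom(x,o)<t\}\bigr)\le\const\,t^{1/\beta}\qtext{for all sufficiently small }t>0 ,
\]
since then the integrand of \refeq{IUint} is $O(t^{2/\beta-1})$ near $0$, hence integrable there, while on a fixed interval $[\tau',\tau]$ it is dominated by $(\diam\dom)^2/t$.

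The heart of the proof, and the step I expect to be the main obstacle, is a lower bound for $G_\dom(\cdot,o)$ along the carrot curves of \refdefn{inuni+john}(i). Since $G_\dom(\cdot,o)$ is positive and continuous on $\dom\sm\{o\}$ and $\dom$ is bounded, $m(\ve):=\inf\{G_\dom(y,o):y\in\dom,\ \dis(y)\ge\ve\}>0$ for each $\ve>0$; and the minimum principle applied to the harmonic function $G_\dom(\cdot,o)$ near its pole furnishes $\delta_0,c_0>0$ with $G_\dom(\cdot,o)\ge c_0$ on $B(o,\delta_0)\sm\{o\}$. Given $x\in\dom$, let $\gamma$ be a carrot curve from $x$ to $o$, parametrised by arclength, with point $x_\rho$ at arclength $\rho$, so that $\dis(x_\rho)\ge c_3\rho$ and $B(x_\rho,c_3\rho)\subset\dom$. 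For a fixed small $\ve_0$ and $\rho\le\min\{\ell(\gamma),\ve_0\}$ I would join $x_\rho$ to $x_{\ve_0}$ by a Whitney‑type Harnack chain of balls lying in $\dom$ and running along $\gamma$, each of radius comparable to its distance from $\bdy$; such a chain can be built with at most $c_1\log(\ve_0/\rho)+c_2$ balls (its length being comparable to $\int_\rho^{\ve_0}\dis(x_\sigma)^{-1}d\sigma$). Iterating \refcor{eHI} across the chain — with $A>1$ the elliptic Harnack constant, and using $G_\dom(x_{\ve_0},o)\ge m(c_3\ve_0)$ — gives
\[
G_\dom(x_\rho,o)\ \ge\ A^{-c_1\log(\ve_0/\rho)-c_2}\,m(c_3\ve_0)\ =\ \const\,\rho^{\beta},\qquad \beta:=c_1\log A>0 ,
\]
uniformly in $x$ and in $\rho\le\min\{\ell(\gamma),\ve_0\}$ (when $\ell(\gamma)<\ve_0$ then $d(x,o)<\ve_0$, which falls under the degenerate case below). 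Keeping the chain length $O(\log(1/\rho))$, so that $\beta$ comes out positive, together with the routine provisos that the chain avoids the pole $o$ (which, when they fail, force $x$ into the degenerate regime), are the delicate points; this is exactly where the John condition — and not mere boundedness — is used.

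To conclude I would fix small $t>0$, set $\rho_t=\const\,t^{1/\beta}$ (using the last display, so that $G_\dom(x_{\rho_t},o)\ge A^2t$ whenever $\ell(\gamma)\ge\rho_t$), and write $U_t=\{x\in\dom:G_\dom(x,o)<t\}$. Fix $x\in\dom$. In the principal case ($\ell(\gamma)\ge\rho_t$, and $o\notin B(x_{\rho_t},c_3\rho_t/2)$) the point $z:=x_{\rho_t}$ has $d(x,z)\le\rho_t$, and one application of \refcor{eHI} to $G_\dom(\cdot,o)$ on $B(z,c_3\rho_t/2)\subset\dom$ gives $G_\dom(\cdot,o)\ge t$ on $B(z,c_3\rho_t/4)$, so $B(z,c_3\rho_t/4)\subset\dom\sm U_t$. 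The degenerate cases all force $d(x,o)\le\const\,\rho_t$, and then — for $t$ small, so $\rho_t<\delta_0$ — the punctured ball $B(o,\rho_t)\sm\{o\}\subset\dom\sm U_t$ plays the same role; a single point carries no capacity, so the puncture is harmless. Thus in every case there is a ball $B$ of radius comparable to $\rho_t$, with centre in $B(x,r)$ where $r:=\const\,\rho_t$, and $B\subset\cl B(x,r)\sm U_t$. By \reflem{v<c} and the volume doubling property (\refthm{VDR}),
\[
\frac{\capa_{B(x,2r)}(\cl B(x,r)\sm U_t)}{\capa_{B(x,2r)}(\cl B(x,r))}\ \ge\ \frac{\capa_{B(x,2r)}(B)}{\capa_{B(x,2r)}(\cl B(x,r))}\ \ge\ \const\,\frac{\mu(B)}{\mu(\cl B(x,r))}\ \ge\ \eta_0
\]
for a fixed $\eta_0>0$ independent of $x$ and $t$. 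Since $x\in\dom$ was arbitrary, $w_{\eta_0}(U_t)\le r=\const\,t^{1/\beta}$; if $\eta_0\ge\eta$ this is already the asserted bound, and otherwise, since $w_\eta(U_t)\le w_\eta(\dom)=\cw(\dom)<R_0$, \refthm{cwcomp} (with $\eta'=\eta_0$) upgrades it to $\cw(U_t)\le\const\,w_{\eta_0}(U_t)\le\const\,t^{1/\beta}$. That is the power bound, so \refeq{IUint} holds and \refthm{IU} gives that $\dom$ is IU. This follows the Euclidean argument closely; the one genuinely new ingredient is the carrot‑path Harnack estimate of the second paragraph, which replaces the more direct boundary estimates available in $\R^\di$.
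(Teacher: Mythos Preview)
Your proposal is correct and takes essentially the same approach as the paper: both verify hypothesis~(ii) of \refthm{IU} via Harnack chains along the John carrot curves, obtaining a power bound $\cw(\{x\in\dom:G_\dom(x,o)<t\})\le\const\,t^{1/\beta}$ which makes \refeq{IUint} converge. The paper's five-line proof is terser --- it factors through the distance function by recording $\cw(\{\dis<r\})\le\const r$ and $G_\dom(x,o)\ge\const\,\dis(x)^\alpha$ separately and then combining --- whereas you run the same Harnack-chain argument directly to locate a ball in $\dom\sm U_t$ near each $x$ (and you also note, via \refrem{vol>}, that the positive-injectivity-radius hypothesis is unnecessary for bounded $\dom$); the underlying mechanism is identical.
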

\begin{proof}
Let $\dom$ be a John domain.
Observe that $\cw(\{x\in\dom:\dis(x)<r\})\le\const r$ for small $r>0$ by definition
and $G_\dom(x,o)\ge\const\dis(x)^\alpha$ with some $\alpha>0$ by the Harnack inequality.
Hence
\[
\cw(\{x\in\dom: G_\dom(x,o)<t\})
\le \cw(\{x\in\dom: \dis(x)<(t/\const)^{1/\alpha}\})
\le \const t^{1/\alpha},
\]
so that \refeq{IUint} holds.
Therefore \refthm{IU} asserts that $\dom$ is IU.
\end{proof}

\def\cprime{$'$} \def\cprime{$'$} \def\cprime{$'$} \def\cprime{$'$}
\providecommand{\bysame}{\leavevmode\hbox to3em{\hrulefill}\thinspace}
\providecommand{\MR}{\relax\ifhmode\unskip\space\fi MR }
\providecommand{\MRhref}[2]{%
  \href{http://www.ams.org/mathscinet-getitem?mr=#1}{#2}
}
\providecommand{\href}[2]{#2}


\begin{thebibliography}{10}

\bibitem{MR3420485}
H.~Aikawa, \emph{Intrinsic ultracontractivity via capacitary width}, Rev. Mat.
  Iberoam. \textbf{31} (2015), no.~3, 1041--1106.

\bibitem{MR856511}
A.~Ancona, \emph{On strong barriers and an inequality of {H}ardy for domains in
  {${\bf R}\sp n$}}, J. London Math. Soc. (2) \textbf{34} (1986), no.~2,
  274--290.

\bibitem{MR1801253}
D.~H. Armitage and S.~J. Gardiner, \emph{Classical potential theory}, Springer
  Monographs in Mathematics, Springer-Verlag London Ltd., London, 2001.

\bibitem{MR1124298}
R.~Ba{\~n}uelos, \emph{Intrinsic ultracontractivity and eigenfunction estimates
  for {S}chr\"odinger operators}, J. Funct. Anal. \textbf{100} (1991), no.~1,
  181--206.

\bibitem{MR1206335}
R.~Ba{\~n}uelos and B.~Davis, \emph{A geometrical characterization of intrinsic
  ultracontractivity for planar domains with boundaries given by the graphs of
  functions}, Indiana Univ. Math. J. \textbf{41} (1992), no.~4, 885--913.

\bibitem{MR2998918}
M.~T. Barlow, A.~Grigor'yan, and T.~Kumagai, \emph{On the equivalence of
  parabolic {H}arnack inequalities and heat kernel estimates}, J. Math. Soc.
  Japan \textbf{64} (2012), no.~4, 1091--1146.

\bibitem{MR1151804}
R.~F. Bass and K.~Burdzy, \emph{Lifetimes of conditioned diffusions}, Probab.
  Theory Related Fields \textbf{91} (1992), no.~3-4, 405--443.

\bibitem{MR1269281}
F.~Cipriani, \emph{Intrinsic ultracontractivity of {D}irichlet {L}aplacians in
  nonsmooth domains}, Potential Anal. \textbf{3} (1994), no.~2, 203--218.

\bibitem{MR608287}
C.~B. Croke, \emph{Some isoperimetric inequalities and eigenvalue estimates},
  Ann. Sci. \'{E}cole Norm. Sup. (4) \textbf{13} (1980), no.~4, 419--435.

\bibitem{MR961611}
C.~B. Croke and H.~Karcher, \emph{Volumes of small balls on open manifolds:
  lower bounds and examples}, Trans. Amer. Math. Soc. \textbf{309} (1988),
  no.~2, 753--762.

\bibitem{MR990239}
E.~B. Davies, \emph{Heat kernels and spectral theory}, Cambridge Tracts in
  Mathematics, vol.~92, Cambridge University Press, Cambridge, 1989.

\bibitem{MR766493}
E.~B. Davies and B.~Simon, \emph{Ultracontractivity and the heat kernel for
  {S}chr\"odinger operators and {D}irichlet {L}aplacians}, J. Funct. Anal.
  \textbf{59} (1984), no.~2, 335--395.

\bibitem{MR1124297}
B.~Davis, \emph{Intrinsic ultracontractivity and the {D}irichlet {L}aplacian},
  J. Funct. Anal. \textbf{100} (1991), no.~1, 162--180.

\bibitem{MR2569498}
A.~Grigor'yan, \emph{Heat kernel and analysis on manifolds}, AMS/IP Studies in
  Advanced Mathematics, vol.~47, American Mathematical Society, Providence, RI;
  International Press, Boston, MA, 2009.

\bibitem{MR3170207}
J.~Lierl and L.~Saloff-Coste, \emph{The {D}irichlet heat kernel in inner
  uniform domains: local results, compact domains and non-symmetric forms}, J.
  Funct. Anal. \textbf{266} (2014), no.~7, 4189--4235.

\bibitem{MR1755257}
P.~J. M{\'e}ndez-Hern{\'a}ndez, \emph{Toward a geometric characterization of
  intrinsic ultracontractivity for {D}irichlet {L}aplacians}, Michigan Math. J.
  \textbf{47} (2000), no.~1, 79--99.

\bibitem{MR0133586}
A.~Persson, \emph{Bounds for the discrete part of the spectrum of a
  semi-bounded {S}chr\"odinger operator}, Math. Scand. \textbf{8} (1960),
  143--153.

\bibitem{MR1872526}
L.~Saloff-Coste, \emph{Aspects of {S}obolev-type inequalities}, London
  Mathematical Society Lecture Note Series, vol. 289, Cambridge University
  Press, Cambridge, 2002.

\bibitem{MR3682197}
M.~van~den Berg, \emph{Spectral bounds for the torsion function}, Integral
  Equations Operator Theory \textbf{88} (2017), no.~3, 387--400.

\end{thebibliography}
\end{document}